\newcommand{\Hom}{\mathrm{Hom}}
\newcommand{\Ext}{\mathrm{Ext}}
\newcommand{\brokrarr}{\vphantom{\to}\mathrel{\smash{{-}{\rightarrow}}}}
\newcommand{\Ker}{\mathrm{Ker}}
\newcommand{\Aut}{\mathrm{Aut}}
\newcommand{\sdp}{\mathbin{{>}\!{\triangleleft}}} 
\newcommand{\Alt}{\mathrm{A}}   
\newcommand{\GL}{\mbox{\boldmath$\rm GL$}}
\newcommand{\PGL}{\mbox{\boldmath$\rm PGL$}}
\newcommand{\Sp}{\mbox{\boldmath$\rm Sp$}}
\newcommand{\bG}{\mbox{\boldmath$\rm G$}}
\newcommand{\SL}{\mbox{\boldmath$\rm SL$}}
\newcommand{\U}{\mbox{\boldmath$\rm U$}}
\newcommand{\rank}{\mathrm{rank}}
\newcommand{\Char}{\mathrm{\rm char\,}} 
\newcommand{\diag}{\mathrm{\rm diag}}
\newcommand{\Gal}{\mathrm{Gal}}
\newcommand{\galois}{\Gal}
\newcommand{\lra}{\longrightarrow}
\newcommand{\SO}{\mbox{\boldmath$\rm SO$}}
\newcommand{\Orth}{\mbox{\boldmath$\rm O$}}
\newcommand{\M}{\mathrm{M}}        
\newcommand{\ord}{\mathop{\rm ord}\nolimits}
\newcommand{\Sym}{{\mathrm{S}}}    
\newcommand{\tr}{\mathrm{\rm tr}}
\newcommand{\trace}{\tr}
\newcommand{\Res}{\mathrm{Res}}
\newcommand{\Sha}{\mbox{\rus{\fontsize{11}{11pt}\selectfont{SH}}}}
\renewcommand{\H}{\mathcal{H}}
\newcommand{\gen}[1]{\langle{#1}\rangle}
\newcommand{\C}{\mathcal{C}}
\newcommand{\Z}{\mathbb{Z}}
\newcommand{\Gm}{{\mathbb{G}_m}}
\newcommand{\Gmr}{\mathbb{G}_m^r}
\newcommand{\GG}{\mathbb{G}}
\newcommand{\rk}{\mathrm{rank}}
\newcommand{\PP}{\mathbb{P}}
\newcommand{\Cr}{\mathrm{Cr}}
\newcommand{\Bir}{\mathrm{Bir}}
\newcommand{\Q}{\mathbb{Q}}
\newcommand{\Quot}{\mathrm{Quot}}
\newcommand{\Spec}{\mathrm{Spec}}
\newcommand{\Pic}{\mathrm{Pic}}
\def\A{\mathbb{A}}
\def\cN{\mathcal{N}}
\def\cH{\mathcal{H}}
\def\cG{\mathcal{G}}
\def\Z{\mathbb{Z}}
\def\Q{\mathbb{Q}}
\def\R{\mathbb{R}}
\def\G{\mathcal{G}}
\def\SO{\rm{SO}}
\def\SL{\rm{SL}}
\def\e{\mathbf{e}}
\def\Fp{\mathbb{F}_p}
\def\Ind{\mathrm{Ind}}
\def\Cl{\mathrm{Cl}}
\def\F{\mathbb{F}}
\def\x{\mathbf{x}}
\def\y{\mathbf{y}}
\def\id{\mathrm{id}}
\def\calO{\mathcal{O}}
\def\B{\mathcal{B}}
\def\0{\mathbf{0}}
\def\v{\mathbf{v}}
\def\w{\mathbf{w}}
\def\aa{\mathbf{a}}
\def\bb{\mathbf{b}}
\def\bB{\mathbf{B}}
\def\bA{\mathbf{A}}
\def\bG{\mathbf{G}}
\def\bF{\mathbf{F}}
\def\wr{\mathrm{wr}}
\def\Syl{\mathrm{Syl}}
\newtheorem{theorem}{Theorem}[section]
\newtheorem{lemma}[theorem]{Lemma}
\newtheorem{prop}[theorem]{Proposition}
\newtheorem{cor}[theorem]{Corollary}
\theoremstyle{definition}
\newtheorem{defn}[theorem]{Definition}
\newtheorem{example}[theorem]{Example}
\newtheorem{remark}[theorem]{Remark}
\newtheorem{notation}[theorem]{Notation}
\title
{Four-dimensional Algebraic Tori}
\author{Nicole Lemire}
\begin{document}
\begin{abstract} 
The study of the birational properties of algebraic $k$-tori began in the
 sixties and seventies with work of Voskresenkii, Endo, Miyata, Colliot-Th\'el\`ene and Sansuc. There was particular interest in determining the rationality of a given algebraic $k$-tori. As rationality problems for algebraic varieties are in general difficult, it is natural to consider relaxed notions such as stable rationality, or even retract rationality.  Work of the above authors
and later Saltman in the eighties determined 
necessary and sufficient conditions to determine when an algebraic torus is 
stably rational, respectively retract rational in terms of the integral 
representations of its associated character lattice. An interesting question
is to ask whether a stably rational algebraic $k$-torus
is always rational. In the general case, there exist examples of non-rational stably rational $k$-varieties.  Algebraic $k$-tori of dimension $r$ are classified up to isomorphism by conjugacy classes of finite subgroups of $\GL_r(\Z)$.
This makes it natural to examine the rationality problem for algebraic $k$ 
tori of small dimensions.
In 1967, Voskresenskii~\cite{Vos67} proved that all algebraic tori of dimension 2 are rational.  In 1990, Kunyavskii~\cite{Kun90} determined which algebraic tori of dimension 3 were rational.  In 2012, Hoshi and Yamasaki~\cite{HY12} determined which algebraic tori
of dimensions 4 and 5 were stably (respectively retract) rational with the aid of GAP.  They did not address the rationality question in dimensions 4 and 5.
In this paper, we show that all stably rational algebraic $k$-tori of dimension 4 are rational, with the possible exception of 10 undetermined cases.
Hoshi and Yamasaki found 7 retract rational but not stably rational dimension 4 algebraic $k$-tori.  We give a non-computational proof of these results.
\end{abstract}

\maketitle

\section{Introduction}

Rationality problems in algebraic geometry are central but notoriously difficult questions, making it natural to consider relaxed notions.
Let $k$ be a field and  let $X$ be a $k$-variety.
$X$ is \emph{$k$-rational} if it is birationally isomorphic to projective $n$ space, $\PP^n_k$; $X$ is \emph{stably $k$-rational} if $X\times_k \A^r$ is rational for some $r\ge 0$.
$X$ is \emph{retract $k$-rational} if there exist rational maps 
$f:X\brokrarr \A^n$  and $g:\A^n\brokrarr X$ such that $g\circ f=\id_X$.
$X$ is \emph{$k$-unirational} if there exists a dominant rational map
$\PP^m_k\brokrarr X$, for some $m$.
Note that rationality implies stably rationality implies retract rationality
implies unirationality.
It is not too hard to find non-stably rational retract rational $k$-varieties
- we will discuss such examples later.  It is much more difficult to find non-rational stably rational $k$-varieties - although examples were found by
 Beauville, Colliot-Th\'el\`ene, Sansuc and Swinnerton-Dyer~\cite{BCTSSD85}
in 1985. 
In this paper, we examine the rationality problem for algebraic $k$-tori of dimension 4 where $k$ is a field of characteristic 0.  For algebraic $k$-tori, the question of whether
there exist non-rational stably rational algebraic $k$-tori remains open to the 
best of the author's knowledge.

For any fixed quasi-projective $k$-variety $X$, a $k$-form of $X$ 
is another $k$-variety $Y$ which is isomorphic to $X$ after extending to the 
separable closure $k_s$ of $k$.
Isomorphism classes of $k$-forms of $X$ are in bijection with elements
of non-abelian Galois cohomology set $H^1(\G_k,\Aut(X))$ where
$\G_k=\Gal(k_s/k)$ is the absolute Galois group.

An algebraic $k$-torus $T$ of dimension $r$ is a $k$-form of a split $k$-torus so that
$$T\times_k k_s\cong \GG^r_{m,k_s}$$
Algebraic $k$-tori of dimension $r$ are determined up to isomorphism
by continuous representations of $\G_k$ into $\GL_r(\Z)$.
Any algebraic $k$-tori of dimension $r$ is split by a finite Galois extension $L$ over $k$.
That is, there exists a finite Galois extension $L/k$ such that
$$T\times_k L\cong \GG^r_{m,L}$$
Algebraic $k$-tori of dimension $r$ which are split by a finite Galois extension $L/k$ are in bijection with conjugacy classes of finite subgroups of $\GL_r(\Z)$.  
More precisely, a finite subgroup $G$ of $\GL_r(\Z)$ up to conjugacy  determines a $G$-lattice $M_G$ up to isomorphism. 
Then $T_G=\Spec(L[M_G]^G)$ is an algebraic $k$-torus of dimension $r$  split by $L$.
Conversely, the character lattice of an algebraic $k$-torus of dimension $r$ split by $L$ is a $G$-lattice of rank $r$ and so determines a conjugacy class of finite subgroups of $\GL_r(\Z)$.

Given a finite subgroup $G$ of $\GL_r(\Z)$ and a Galois $G$-extension $L/k$,
the function field
$T_G=\Spec(L[M_G]^G)$ is the quotient field $K=L(M_G)^G$ of $L[M_G]^G$.
This concrete description allows us to take advantage of the fact that each of the rationality concepts defined above for a $k$-variety $X$ can be rephrased 
in terms of its function field $K=k(X)$.
$X$ is $k$-rational if and only if $K/k$ is a rational extension, i.e.
there exist  $z_1,\dots, z_n\in K$ which are algebraically independent over $k$
and such that $K=k(z_1,\dots,z_n)$.
$X$ is stably $k$-rational if and only if there exists a field $L$ containing $K$ which is rational over both $k$ and $K$.
$X$ is retract $k$-rational if and only if  $K$ contains a $k$-algebra $R$ such that $K$ is the quotient field of $R$ and the
identity map $1_R$ factors through the localisation of a polynomial ring over $k$. 
$X$ is $k$-unirational if and only if $k\subseteq K \subseteq k(x_1,\dots,x_m)$ for some $m$.

Voskresenskii, Endo, Miyata, Colliot-Th\'el\`ene, Sansuc, Saltman studied the rationality problem for algebraic $k$-tori from the sixties to the 
eighties~\cite{Vos67,Vos70,Vos74,Vos83,EM73,EM73p1,EM74,EM75,EM76,CTS77,CTS87,Sa84a}.  They determined conditions under which a algebraic $k$-torus is stably
rational, and retract rational respectively.  The conditions were phrased in terms of the character lattice of the algebraic $k$-torus as a lattice over its
splitting group.  

For algebraic tori of small dimensions, the question of retract/stable rationality has been addressed.  Since algebraic $k$-tori of dimension $r$ split by a Galois extension $L/k$ are determined up to isomorphism by conjugacy classes of finite subgroups of $\GL_r(\Z)$, this is a finite set by a theorem of Jordan.
For small values of $r$, the classification of conjugacy classes of finite
subgroups is known but the number of such conjugacy classes grows rapidly 
with $r$.  

For $r=2$, there are 13 conjugacy classes of finite subgroups of $\GL_2(\Z)$ and hence 13 possible isomorphism classes of algebraic $k$-tori.
Voskresenskii~\cite{Vos67} showed in 1967 that all 2-dimensional algebraic $k$-tori are
rational.

For $r=3$, there are 73 conjugacy classes of finite subgroups of $\GL_3(\Z)$,
a classification due to Tahara~\cite{Tah71} in 1971.
In 1990, Kunyavskii~\cite{Kun90} classified algebraic $k$-tori of dimension 3 up to birational equivalence.  He found that all but 15 were rational. The remaining 15 were
not even retract rational.

There are 710 conjugacy classes of finite subgroups of $\GL_4(\Z)$.  Dade~\cite{Dad65}
found the maximal conjugacy classes of finite subgroups of $\GL_4(\Z)$ in 1965
without the use of a computer by analyzing the quadratic forms stabilized by the subgroups.  In 1971, B\"ulow, Neub\"user~\cite{BN71} determined all the conjugacy classes of finite subgroups of $\GL_4(\Z)$ using Dade's classification
and computer techniques.  In 1978, together with H. Brown, H. Wondratschek, and H. Zassenhaus~\cite{BBNWZ78}, they wrote a  book
on Crystallographic Groups of Four-Dimensional Space.
This classification determined the library of crystallographic groups of 
dimensions 2,3,4 which is programmed into GAP~\cite{GAP4}.
In 2012, Hoshi and Yamasaki~\cite{HY12}  determined which algebraic tori of dimensions 4 and 5 are stably (respectively retract) rational using GAP.
For dimension 4, they found that 487 algebraic tori are stably rational, 7 are
retract but not stably rational and the remaining 216 are not retract rational.
They did not address the issue of rationality.
In this paper, I show that all but possibly 10 of the stably rational algebraic tori
of dimension 4 are rational.  

An important source of examples of algebraic tori are the norm one tori.
Given a separable field extension $K/k$ of degree $n$ and $L/k$ the Galois closure of $K/k$, the norm one torus $R^{(1)}_{K/k}(\Gm)$ is the kernel of the 
norm map $R_{K/k}(\Gm)\to \Gm$ where $R_{K/k}$ is the Weil restriction~\cite[Section 3.12]{Vos98}.
Note $R_{K/k}(\Gm)$ is split by $L/k$. Let $G=\Gal(L/k)$ and $H=\Gal(L/K)$.
The character lattice of $R^{(1)}_{K/k}(\Gm)$ is given by the $G$-lattice $J_{G/H}$, the dual of the kernel of the augmentation ideal of the permutation 
lattice $\Z[G/H]$.

The rationality problem for norm one tori was studied by many authors
~\cite{EM74,CTS77,CTS87,Leb95,CK00,LL00,Flo2,End11}.
The stable and retract rationality of norm one tori corresponding to Galois
field extensions is well understood by work of Endo, Miyata, Saltman, Colliot-
Th\'el\`ene, Sansuc.

\begin{theorem} Let $K/k$ be a finite Galois extension with Galois
group $G$.  Let $R^{(1)}_{K/k}(\Gm)$ be the corresponding norm 1 torus.
\begin{enumerate}
\item
$R^{(1)}_{K/k}(\Gm)$ is retract rational
if and only if the Sylow subgroups of $G$ are cyclic.~\cite{EM74,Sa84a}
\item 
$R^{(1)}_{K/k}(\Gm)$ is stably rational if and only if the 
Sylow subgroups of $G$ are cyclic and $\hat{H}^0(G,\Z)=\hat{H}^4(G,\Z)$
where $\hat{H}$ indicates Tate cohomology.~\cite{EM74,CTS77}

Note that the conditions above are equivalent to the following restrictions on the structure of the group $G$:
$$G=C_m\mbox{ or } G=C_n\times\langle \sigma,\tau: \sigma^k=\tau^{2^d}=1,
\tau\sigma\tau^{-1}=\sigma^{-1}\rangle$$
where $d\ge 1,k\ge 3, n,k$ odd and $\gcd(n,k)=1$.
\end{enumerate}
\end{theorem}

These theorems were proven in the 1970s.  Some cases of non-Galois separable
extensions were dealt with much more recently.

\begin{theorem}~\cite{End11}
Let $K/k$ be a finite non-Galois separable field extension
and let $L/k$ be the Galois closure of $K/k$. Let $H\le G$ be the Galois groups
of $L/K$ and $L/k$ respectively.  
\begin{enumerate}
\item If $G$
is nilpotent, then the norm one torus $R^{(1)}_{K/k}(\Gm)$ is not retract rational.
\item If all the Sylow subgroups of $G$ are cyclic, 
 then the norm one torus $R^{(1)}_{K/k}(\Gm)$ is retract $k$-rational and, we
have that 
$R^{(1)}_{K/k}(\Gm)$ is stably $k$-rational if and only if 
either $G=D_{2n}$ where $n$ is odd or $G=C_m\times D_{2n}$ where $n$ is odd and 
$\gcd(m,n)=1$ and $H\le D_{2n}$ is a cyclic group of order 2.
\end{enumerate}
\end{theorem}

\begin{theorem}~\cite{CTS87,Leb95,CK00,LL00,End11} 
Let $K/k$ be a finite non-Galois separable field extension and $L/k$  be the Galois closure of $K/k$ such that the Galois group of $L/k$ is $S_n$ and that of $L/K$ is $S_{n-1}$.
Then
\begin{enumerate} 
\item $R^{(1)}_{K/k}(\Gm)$ is retract $k$-rational if and only if $n$ is prime.
\item $R^{(1)}_{K/k}(\Gm)$ is (stably) $k$-rational if and only if $n=3$.
\end{enumerate}
\end{theorem}

\begin{theorem}~\cite{End11} Let $K/k$ be a non-Galois separable field extension of degree
$n$ and let $L/k$ be the Galois closure of $K/k$ such that the Galois 
group of $L/k$ is $A_n, n\ge 4$ and that of $L/K$ is $A_{n-1}$. 
Then
\begin{enumerate}
\item $R^{(1)}_{K/k}(\Gm)$ is retract $k$-rational if and only if $n$ is prime.
\item For some positive integer $t$, $[R^{(1)}_{K/k}(\Gm)]^{(t)}$ is stably $k$-rational
if and only if $n=5$.
\end{enumerate}
\end{theorem}

S. Endo asked in~\cite{End11} about the stable rationality of $R^{(1)}_{K/k}(\Gm)$
in the $A_5$ case.  Hoshi and Yamasaki~\cite{HY12} used GAP to show that in fact
$R^{(1)}_{K/k}(\Gm)$ is stably rational in this case.
In this paper, we present a non-computational proof of that fact.
The norm one torus $R^{(1)}_{K/k}(\Gm)$ corresponding to $A_5$ has 
character lattice $J_{A_5/A_4}$. 
It is one of the ten algebraic $k$-tori of dimension 4 which is stably
rational but whose rationality is unknown.
Another of these exceptional stably rational algebraic $k$-torus of dimension 
4 whose rationality is unknown is intimately 
related to this norm one torus. In fact its splitting group is $A_5\times C_2$ 
and its character lattice restricts to  $J_{A_5/A_4}$ on $A_5$.
The character lattices of the remaining 8 stably rational algebraic $k$-tori whose rationality is unknown are also intimately related.  
See Section 5 for more details.

This paper is organised as follows.
In Section 2, we will recall some facts about algebraic $k$-tori and 
conditions for stable and retract rationality.
In Section 3, we will determine some families of hereditarily rational algebraic $k$-tori.  We call an rational algebraic $k$-torus corresponding to a finite subgroup 
$G$ of $\GL_r(\Z)$ \emph{hereditarily rational} if an algebraic $k$-tori corresponding to any subgroup of $G$ is rational.  We do not know whether all rational algebraic $k$-tori are hereditarily rational but we show this to be the case for
a number of natural families of algebraic tori.
In Section 4,  we show that 8 of the 10  maximal conjugacy classes of 
$\GL(4,\Z)$ which correspond to stably rational tori from Hoshi and Yamasaki's list have corresponding algebraic tori which are hereditarily rational. 
 We then use GAP to show that the
set of rational $k$-tori obtained from subgroups of these groups  is of size 477
and matches the set of stably rational $k$-tori of dimension 4 obtained
by Hoshi and Yamasaki with 10 undetermined exceptions.  Note that the use of GAP
is very minimal: it is limited to finding  conjugacy classes of finite subgroups of $\GL_4(\Z)$ which are subgroups of the groups corresponding to the 8 hereditarily rational algebraic $k$-tori mentioned above.
In Sections 5 and 6, we give non-computational proofs that the remaining two maximal
algebraic $k$-tori of dimension 4 are stably rational, recovering the results of Hoshi and Yamasaki.
We also show that 7 algebraic $k$-tori of dimension 4 are retract but not stably rational in a non-computational way, also recovering the results of Hoshi
and Yamasaki. 
We remark that some rationality results for stably rational algebraic tori of dimension 5 have been found by Armin Jamshidpey in a thesis which is soon to be defended.

\section{Preliminaries}
 
We begin with some remarks on notation used in this paper.  
Throughout, $k$ will be a field of characteristic zero.  We will see that this
hypothesis is necessary to apply the criteria for stable and retract rationality.

For finite groups, we will denote by $C_n$
the cyclic group of order $n$; by $D_{2n}$ the dihedral group of order $2n$;
by $S_n$ the symmetric group on $n$ letters and by $A_n$ the alternating group 
on $n$ letters.

We will discuss below root system terminology, algebraic torus - lattice correspondence, lattice terminology and birational properties of algebraic tori.

\subsection{Root Systems}  
For more information on root systems, see, for example,
Humphreys~\cite[Chapter III]{Hum72}. 
Here we will review the notation that we will use on root systems.
Note that we use root systems on vector spaces over $\Q$, instead of $\R$,
but as our root systems are all crystallographic, this does not affect the theory.
Let $\Phi$ be a root system on a finite dimensional $\Q$ vector space $V$
equipped with a fixed symmetric bilinear form $(\cdot,\cdot)$.
Then, by definition, $V=\Q\Phi$ and 
the reflection $s_{\alpha}:V\to V$ in the root $\alpha\in \Phi$,
is given by 
$$s_{\alpha}(x)=x-\langle x,\alpha\rangle \alpha$$
where
$$\langle x,y\rangle=2\frac{(x,y)}{(y,y)},x,y\in V$$

The Weyl group of $\Phi$, or the group generated by these reflections, will be denoted by $W(\Phi)$.
Recall that for a basis $\Delta$ for the root system $\Phi$, we have
$$W(\Phi)=\langle s_{\alpha}:\alpha\in \Phi\rangle =\langle s_{\alpha}:\alpha\in \Delta\rangle$$ 
so that the Weyl group can be generated by just the simple reflections, those with respect to roots in $\Delta$.
Recall that $\Aut(\Phi)$, the group of automorphisms of the root system $\Phi$,
is 
$$\Aut(\Phi)=\{g\in \GL(\Q\Phi): \langle gv,gw\rangle =\langle v,w\rangle, v,w
\in \Q\Phi\}$$
The Weyl group $W(\Phi)$ is a subgroup of $\Aut(\Phi)$.
We will denote by $\Z\Phi$, the  root lattice of $\Phi$.  $\Z \Phi$ is the $\Z$-span of $\Phi$ 
and has $\Z$-basis $\Delta$, where $\Delta$ is a basis of $\Phi$.
 Its weight lattice is 
$$\Lambda(\Phi)=\{x\in \Q\Phi: \langle x,\alpha\rangle\in \Z\mbox{ for all }\alpha\in \Phi\}.$$
$\Z \Phi\subseteq \Lambda(\Phi)$ are lattices of the same rank
and both are stabilised by the finite subgroup $\Aut(\Phi)$ and so by its
subgroup $W(\Phi)$.  

A root system is irreducible if $\Phi$ cannot be partitioned into two proper 
orthogonal subsets.
Root systems can be decomposed into a disjoint union of irreducible root systems.  There is a classification of irreducible root systems.
There are 4 infinite families $A_n,B_n,C_n,D_n$ and exceptionals $E_6,E_7,E_8,
F_4,G_2$. 
For our applications, we will mainly be referring to the irreducible root systems of type 
$A_n$, $B_n$, $F_4$ and $G_2$.  We will refer to Humphreys~\cite[III,12,13]{Hum72} for explicit constructions of the irreducible root systems, their bases,
Weyl groups, weight lattices and automorphism groups. 

We make some particular remarks about automorphism groups of root systems that will be used subsequently.
The automorphism group of $A_n$ is $W(A_n)\times C_2=S_{n+1}\times C_2$.
As an $W(A_n)=S_{n+1}$ representation, the root lattice of $A_n$ is $I_{X_{n+1}}$, the augmentation ideal of the 
$S_{n+1}$-permutation lattice corresponding to the natural transitive $S_{n+1}$-set $X_{n+1}$ with stabilizer subgroup $S_n$.  As an $\Aut(A_n)$-representation, the root lattice of $A_n$ is 
$I_{X_{n+1}}\otimes I_{Y_2}$ where $X_{n+1}$ is the $\Aut(A_n)$-set on which $C_2$ acts trivially and $S_{n+1}$ acts as above; and $Y_2$ is the $\Aut(A_n)$-set of size 2 on which $C_2$ acts transitively and $S_{n+1}$ acts trivially.
Note that $\Aut(A_2)=W(G_2)$ and the root lattice of $G_2$ and $A_2$ coincide.
This implies that the root lattice of $G_2$ is $I_{X_3}\otimes I_{Y_2}$ as 
an $\Aut(A_2)=S_3\times C_2$-lattice.




\subsection{Lattice-Tori Correspondence}
We will discuss in more detail the correspondence between isomorphism 
classes of algebraic $k$-tori of dimension $r$ and conjugacy classes
of finite subgroups of $\GL_r(\Z)$ as well as criteria 
for determining whether a given algebraic $k$-torus is retract or stably 
rational.~\cite[Chapter 2]{Vos98}
 
An algebraic $k$-torus of dimension $r$ is a $k$-form of a split $k$-torus
$\Gmr$. As discussed in the introduction, this implies that 
isomorphism classes of algebraic $k$-tori are in bijection 
with elements of the non-abelian Galois cohomology
set $H^1(\cG_k,\Aut(\Gmr))$ where $\cG_k=\Gal(k_s/k)$ is the absolute Galois
group of $k$. Since $\cG_k$ acts trivially on $\Aut(\Gmr)=\GL_r(\Z)$,
elements of $H^1(\cG_k,\Aut(\Gmr))$ are actually continuous homomorphisms
of the compact profinite group $\cG_k$ into the discrete group $\GL_r(\Z)$
and as such have finite image.
This shows that algebraic $k$-tori of dimension $r$ are determined up to isomorphism
by continuous representations of $\cG_k$ into $\GL_r(\Z)$
or equivalently by  lattices of rank $r$ with a continuous action of $\cG_k$ up to isomorphism.

Given such a continuous representation $\rho:\cG_k\to \GL_r(\Z)$,
and the associated $\cG_k$ lattice $M$ of rank $r$, 
$\Spec(k_s[M]^{\cG_k})$ is an algebraic $k$-torus.
Conversely, an algebraic $k$-torus $T$ determines its character
lattice $\hat{T}=\Hom(T,\Gm)$ which is a lattice equipped with a continuous action of $\cG_k$.

Every algebraic $k$-torus is split by a finite Galois extension.
In fact, if the algebraic $k$-torus is determined by a continuous representation
$\rho:\cG_k\to \GL_r(\Z)$ with associated $\cG_k$ lattice $M$ of rank $r$,
then for $N=\ker(\rho)$, $L=k_s^N$ is a finite Galois extension of $k=k_s^{\cG_k}$
with finite Galois group $G\cong \cG_k/N\cong \rho(\cG_k)$.
Note that $L[M]^G=(k_s[M]^N)^{\cG_k/N}=k_s[M]^{\cG_k}$.
Isomorphism classes of algebraic tori of rank $r$ split by a finite Galois extension $L/k$
with Galois group $G$  are in bijection with isomorphism classes of $G$-lattices of rank $r$. Here the $G$-lattice $M$ determines the algebraic torus
split by $L/k$ as $\Spec(L[M]^G)$ and conversely the algebraic torus $T$ with 
splitting group $G$ determines its character lattice which is a $G$-lattice.

Criteria for determining the stable (respectively retract)
rationality of an algebraic $k$-torus split by a finite Galois extension $L/k$ 
with Galois group $G$ are phrased in terms of the integral representation theory of the character lattice as a $G$-lattice.
To describe these criteria, we need some definitions about $G$-lattices.

We introduce some notation: 
For a finite Galois extension $L/k$ with Galois group $G$, we will denote
by $C(L/k)$, the category of algebraic $k$-tori split by $L$
and by $C(G)$, the dual category of $G$-lattices (torsion-free $G$ modules)
of finite rank.

\subsection{Lattice terminology}
For more details, see for example, Lorenz~\cite{Lor05}.
\begin{defn}
\begin{enumerate}
Let $G$ be a finite group. Note that for a $G$-lattice 
$M$ and a subgroup $H$ of $G$, $\hat{H}^k(H,M)$ refers to the 
$k$th Tate cohomology group of $M$ as an $H$ module, where $k\in \Z$.
\item For a $G$-lattice  the dual lattice $M^*=\Hom(M,\Z)$ is a $G$-lattice
with $(g\cdot f)(m)=g\cdot f(g^{-1}\cdot m)$ for $f\in M^*$, $g\in G$, $m\in M$.
\item 
A  \emph{permutation $G$-lattice} is a $G$-lattice with a finite $\Z$
basis which is permuted by the action of $G$.  All transitive
permutation lattices with stabilizer subgroup $H$ are isomorphic to 
$\Z[G/H]$ where $G/H$ is the set of left cosets of $H$ in $G$.
Any permutation $G$-lattice is the direct sum of a finite number 
of transitive $G$ permutation lattices.  

\item A \emph{sign permutation $G$-lattice} is a $G$-lattice with 
$\Z$-basis which is permuted by the action of $G$ up to sign.

\item Given an $H$-lattice $N$ where $H\le G$, $\Ind^G_H(M)=\Z G\otimes_{\Z H}M$
is the $G$-lattice induced from the $H$ lattice $M$.
Note that $\Ind^G_H(\Z)=\Z[G/H]$.
\item An \emph{invertible} or \emph{permutation projective} $G$-lattice $M$
is a $G$-lattice which is a direct summand of a $G$ permutation lattice.
That is, there exists a $G$-lattice $M'$ such that $M\oplus M'=P$
for some $G$ permutation lattice $P$.
\item A $G$-lattice $M$ is \emph{quasipermutation} if there exists 
a short exact sequence $0\to M\to P\to Q\to 0$ of $G$-lattices
with $P,Q$ $G$-permutation lattices.

\end{enumerate}

\end{defn}

The following  are some natural $G$-lattices associated with each $G$-permutation lattice:
\begin{defn}
For a finite $G$-set $X$, let the associated permutation $G$-lattice be denoted as $\Z[X]$.  A natural $G$ sublattice is the augmentation ideal $I_X$ 
given by the kernel of the $G$-equivariant homomorphism
$\epsilon_X:\Z[X]\to \Z$, sending $x\to 1$ for all $x\in X$.
But then 
$$0\to I_X\to \Z[X]\to \Z\to 0$$ 
is a short exact sequence of $G$-lattices.
Let $J_X=(I_X)^*$ be its $\Z$ dual.
Then $J_X$ satisfies the short exact sequence of $G$-lattices
$$0\to \Z\to \Z[X]\to J_X\to 0$$
\end{defn}

\begin{defn}~\cite{CR90}
Facts and Theorems about Tate Cohomology:

Let $G$ be a finite group and $M$ be a $G$-lattice. Then 
\begin{enumerate}
\item Let $N_G:M\to M^G$ be the norm map $N_G(m)=\sum_{g\in G}g\cdot m$.
Then
$$
\begin{cases}\hat{H}^k(G,M)=H^k(G,M)&\mbox{ if }k\ge 1\\
\hat{H}^0(G,M)=\ker_M(N_G)/I_G(M)& \\
\hat{H}^{-1}(G,M)=M^G/\ker_M(N_G)&\\
\hat{H}^{-i-1}(G,M)=H_i(G,M)&\mbox{ if }i\ge 1
\end{cases} 
$$
\item (Duality)
$\hat{H}^{k}(G,M)\cong \hat{H}^{-k}(G,M^*)$ where $M^*$ is the $\Z$ dual lattice of $M$.
\item (Shapiro's Lemma): For an $H$-lattice $N$, where $H\le G$, we have
$$\hat{H}^k(G,\Ind^G_H(N))\cong \hat{H}^k(H,N)$$
where $\Ind^G_H(M)=\Z G\otimes_{\Z H}M$ is the $G$-lattice induced from the 
$H$-lattice $M$.
\end{enumerate}
\end{defn}

\begin{defn}
\begin{enumerate}
\item A $G$-lattice $M$ is \emph{flasque} if $\hat{H}^{-1}(H,M)=0$ for all subgroups $H\le G$.
\item A $G$-lattice $M$ is \emph{coflasque} if $\hat{H}^1(H,M)=0$ for all subgroups 
$H\le G$.
\end{enumerate}
\end{defn}

Following Voskresenskii, we denote by $S(G)$ the class of all permutation $G$-lattices, $D(G)$ the 
class of all invertible $G$-lattices, $\hat{H}^{-1}(G)$ the class of all
flasque $G$-lattices, $\hat{H}^1(G)$ the class of all coflasque $G$-lattices
and $\tilde{H}(G)=H^1(G)\cap H^{-1}(G)$.
Then 
$$S(G)\subset D(G)\subset \tilde{H}(G)\subset H^i(G)\subset C(G)$$
where each inclusion is proper.
Most inclusions are clear: permutation lattices
are flasque and coflasque due to  Shapiro's Lemma, so invertible lattices 
must also be flasque and coflasque as the direct summands of permutation lattices.
Note also permutation lattices are self-dual and that the dual of a flasque $G$-lattice is coflasque (and vice-versa).

The following definitions and results are due to Voskresenskii, Endo-Miyata, Colliot-Th\'el\`ene and Sansuc
\begin{defn} 
We say that two $G$-lattices $M,N$ are \emph{stably isomorphic} and write
$[M]=[N]$ if and only if there exist permutation lattices $P,Q$ such that 
$M\oplus P\cong N\oplus Q$. 
\end{defn}

\begin{theorem}(See ~\cite[Lemma 2.6.1]{Lor05})
A $G$-lattice $M$ has a flasque resolution.  That is, 
there exists a short exact sequence of $G$-lattices
$$0\to M\to P\to F\to 0$$
such that $P$ is $G$-permutation and $F$ is $G$-flasque.
Given 2 flasque resolutions of a $G$-lattice $M$
$$0\to M\to P_i\to F_i\to 0, i=1,2$$
we have that $[F_1]=[F_2]$.
\end{theorem}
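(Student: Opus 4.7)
The plan is to handle existence and uniqueness separately. For existence, I would work dually and reduce to constructing a coflasque resolution of $M^*$; for uniqueness, the plan is a Schanuel-type pushout argument whose crux is the vanishing of a certain $\Ext^1$.

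For existence, note that $\Z$-duality interchanges permutation lattices with permutation lattices (via $\Z[G/H]^*\cong\Z[G/H]$) and exchanges flasque and coflasque lattices (via $\hat H^{-1}(H,N)\cong\hat H^1(H,N^*)$, already recorded in the preliminaries). It therefore suffices to construct a coflasque resolution $0\to C\to P\to M^*\to 0$ of $M^*$ and then dualize. I would build $P$ by indexing over subgroups: for each subgroup $H\le G$, choose a finite generating set for the finitely generated free $\Z$-module $(M^*)^H$, and for every chosen generator $m\in (M^*)^H$ include a summand $\Z[G/H]\to M^*$ sending the trivial coset to $m$ (which is $G$-equivariant precisely because $m$ is $H$-fixed). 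Since $G$ has only finitely many subgroups and $M^*$ has finite rank, the resulting $P$ is a finite-rank permutation lattice. By construction the invariants map $\phi^H\colon P^H\to (M^*)^H$ is surjective for every $H\le G$. Combined with $H^1(H,P)=0$ (from the Mackey decomposition $\Z[G/H']|_H=\bigoplus_{\sigma}\Ind_{H\cap \sigma H'\sigma^{-1}}^H\Z$, Shapiro's lemma, and the vanishing of $H^1$ of a finite group with trivial $\Z$ coefficients), the long exact cohomology sequence of $0\to C\to P\to M^*\to 0$ forces $H^1(H,C)=0$ for all $H\le G$, i.e., $C$ is coflasque.

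For uniqueness, given two flasque resolutions $0\to M\xrightarrow{\iota_i} P_i\to F_i\to 0$, I form the pushout
$$Q=(P_1\oplus P_2)\big/\{(\iota_1(m),-\iota_2(m)):m\in M\},$$
which yields two short exact sequences $0\to P_2\to Q\to F_1\to 0$ and $0\to P_1\to Q\to F_2\to 0$. The key step is showing $\Ext^1_G(F_i,P_j)=0$: writing $P_j$ as a sum of transitive permutation lattices $\Z[G/H]=\Ind_H^G\Z$ and using the adjunction between restriction and coinduction (noting $\Ind_H^G=\mathrm{Coind}_H^G$ for finite index), we obtain $\Ext^1_G(F_i,\Ind_H^G\Z)\cong\Ext^1_H(F_i,\Z)\cong H^1(H,F_i^*)$, where the last step uses the identity $\Ext^1_H(N,\Z)\cong H^1(H,N^*)$ valid for any $\Z$-free $N$. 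This vanishes because $F_i$ is flasque, hence $F_i^*$ is coflasque. Both extensions therefore split, giving $F_1\oplus P_2\cong Q\cong F_2\oplus P_1$ and hence $[F_1]=[F_2]$.

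The main obstacle I anticipate is the existence construction: one must arrange the subgroup-by-subgroup choice of generators so that the resulting $P$ is simultaneously a finite-rank permutation lattice and has surjective invariants maps at every subgroup, since the latter is precisely what forces coflasqueness of the kernel via the long exact sequence. The uniqueness part, by contrast, is a clean Schanuel-type argument once one has the lattice duality identity $\Ext^1_G(N,\Z)\cong H^1(G,N^*)$ and the Frobenius-style adjunction $\Ext^*_G(-,\Ind_H^G\Z)\cong\Ext^*_H(-,\Z)$ in hand.
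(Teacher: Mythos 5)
Your proof is correct, and it is essentially the argument the paper points to via its citation of \cite[Lemma 2.6.1]{Lor05} (the paper gives no proof of its own, remarking only that the standard proof is ``straightforward and constructive'' and proceeds subgroup by subgroup, exactly as you do): existence by building a coflasque resolution of $M^*$ from generators of the invariants $(M^*)^H$ over all $H\le G$ and dualizing, and uniqueness by the pushout together with the vanishing $\Ext^1_G(F_i,\Z[G/H])\cong \Ext^1_H(F_i,\Z)\cong H^1(H,F_i^*)=0$ for $F_i$ flasque. Nothing further is needed.
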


\begin{defn}
We may then define the \emph{flasque class} of the $G$-lattice $M$
to be $\rho_G(M)=[F]$ where $0\to M\to P\to F\to 0$ is a flasque resolution
and $[F]$ is the stable isomorphism class of $F$.
\end{defn}

\begin{remark} The proof that every $G$-lattice has a flasque resolution (see eg. ~\cite[Lemma 2.6.1]{Lor05}) is straightforward and constructive
but relies strongly on knowledge of the conjugacy classes of subgroups of $G$
and the restrictions of the $G$-lattice $M$ to these subgroups.
Hoshi and Yamasaki~\cite{HY12} gave algorithms to construct these flasque 
resolutions for $G$-lattices of ranks up to 5 for which all of this information is  known.
\end{remark}

\begin{defn}
We say that $G$-lattices $M$ and $N$ are \emph{flasque equivalent} if 
there exist short exact sequences of $G$-lattices such that 
$$0\to M\to E\to P\to 0, 0\to N\to E\to Q\to 0$$
Note that $M\sim_G 0$ if and only if $M$ is a quasipermutation $G$-lattice.
Note that  $M\sim_G N$ if and only if $\rho_G(M)=\rho_G(N)$. (See~\cite[Lemma 2.7.1]{Lor05}).
\end{defn}

\subsection{Birational properties of algebraic $k$-tori}  

We will give a brief summary of how the concepts in the 
last section can be used to discuss birational properties of algebraic $k$-tori.
A more in depth discussion can be found in~\cite[Chapter 2]{Vos98}.

Given an algebraic $k$-torus $T\in C(L/k)$ where $L/k$ is a finite Galois
extension with group $G$, we may find by resolution of singularities (due 
to Hironaka in characteristic 0) 
a smooth projective $k$-variety $X$ which contains $T$ as an open subset.
[Note, this is the reason for our characteristic zero assumption.]
$X$ is called a projective model of $T$.  Then $X$ and $T$ are birationally 
isomorphic.  Then $X_L=L\otimes_k X$ is rational since $X_L$ and $T_L$ are 
birationally equivalent.  Then the Picard group of $X_L$, $\Pic(X_L)$, is a $G$-lattice. Given two such projective models $X,Y$ of $T$, $\Pic(X_L)$
is stably $G$ isomorphic to $\Pic(Y_L)$.  So the class of $[\Pic(X_L)]$
is a birational invariant.  
For any intermediate field extension $k\subset K\subset L$ with $H=\Gal(L/K)$,
$\hat{H}^{\pm 1}(H,\Pic(X_L))$ are also birational invariants of $T$
and $\hat{H}^{-1}(H,\Pic(X_L))=0$.
The inclusion $T_L$ into $X_L$ induces 
a short exact sequence of $G$-lattices
$$0\to \hat{T}\to \hat{S}\to \Pic(X_L)\to 0$$
where $\hat{S}$ is generated by the components of the closed subset $X_L-T_L$.
$\hat{S}$ is a $G$-permutation lattice and as observed above, $\Pic(X_L)$
is a flasque $G$-lattice.

This gives a geometric construction of a flasque resolution of the 
$G$-lattice $\hat{T}$.
If $T$ is $k$-rational, so is $X$  and then $[\Pic X_L]=0$.
In fact if $T$ is stably $k$-rational then so is $X$ and so 
$X\times_k \Gmr$ is rational for some $r\ge 0$ but $\Pic(X_L\times_k \GG^r_{m,L})=\Pic(X_L)$ and so we again have $[\Pic(X_L)]=0$.

This gives the basic setup behind the following results
connecting birational properties of algebraic $k$-tori to properties
about the integral representation of their character lattices over a splitting group.

\begin{theorem}
(Endo-Miyata, Voskresenskii, Saltman) Let $L/k$ be a finite
Galois extension with Galois group $G$ and let $T,T'\in C(L/k)$
with character lattices $\hat{T}, \hat{T'}$ in $C(G)$.
\begin{enumerate}
\item (\cite[Theorem 1.6]{EM73}) $T$ is stably $k$-rational if and only if
$\hat{T}\sim_G 0$ (equivalently $\hat{T}$ is a quasipermutation $G$-lattice). 
\item (\cite[Theorem 2]{Vos74})  $T$ and $T'$ are stably birational $k$ equivalent
if and only if $\hat{T}\sim_G \hat{T'}$.
\item (\cite[Theorem 3.14]{Sa84a}) $T$ is retract $k$-rational if and only 
if $\rho_G(\hat{T})$ is invertible.
\end{enumerate}
\end{theorem}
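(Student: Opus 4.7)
The plan is to exploit the geometric realization of a flasque resolution of $\hat{T}$, which the paragraphs preceding the theorem already outline. By Hironaka's resolution of singularities (valid because $\Char k = 0$), we obtain a smooth projective $k$-variety $X$ containing $T$ as a dense open subvariety. Over $L$, the complement $X_L \setminus T_L$ decomposes into prime divisors permuted by $G$, and since $\Pic(T_L) = 0$ the divisor class sequence reads
$$0 \to \hat{T} \to \hat{S} \to \Pic(X_L) \to 0,$$
with $\hat{S}$ a permutation $G$-lattice. A Shapiro-plus-Hilbert~$90$ calculation applied after restriction to each subgroup $H \leq G$ shows $\Pic(X_L)$ is flasque, so this sequence realizes $\rho_G(\hat{T}) = [\Pic(X_L)]$. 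A comparison of two smooth projective models through a common blow-up further shows that $[\Pic(X_L)]$ is a stable birational invariant of $T$.

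For part (1), if $T$ is stably $k$-rational then $X \times_k \PP^N$ is a projective model of a $k$-rational variety for some $N$, and since $\Pic((X \times_k \PP^N)_L) = \Pic(X_L) \oplus \Z^N$ differs from $\Pic(\PP^{r+N}_L) = \Z$ only by permutation summands, birational invariance forces $[\Pic(X_L)] = 0$, i.e.\ $\hat{T} \sim_G 0$. Conversely, given a quasipermutation presentation $0 \to \hat{T} \to P \to Q \to 0$ with $P, Q$ permutation, dualization yields a short exact sequence of $k$-tori $1 \to Q^{\vee} \to P^{\vee} \to T \to 1$ in which $P^{\vee}$ and $Q^{\vee}$ are quasi-trivial and therefore $k$-rational. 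Since $H^1(k, Q^{\vee}) = 0$ by Shapiro and Hilbert~$90$, the $Q^{\vee}$-torsor $P^{\vee} \to T$ is Zariski locally trivial, giving a birational isomorphism $P^{\vee} \sim T \times_k Q^{\vee}$ and hence stable rationality of $T$. Part (2) follows by the same dualization trick applied to the pair of exact sequences defining $\hat{T} \sim_G \hat{T'}$: the common middle term $E$ yields birational isomorphisms $E^{\vee} \sim T \times P^{\vee}$ and $E^{\vee} \sim T' \times Q^{\vee}$ with $P^{\vee},Q^{\vee}$ rational, giving a stable birational equivalence between $T$ and $T'$; the converse follows from the invariance of $[\Pic(X_L)]$ established above.

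For Saltman's statement (3), the plan is to follow the strategy of~\cite{Sa84a}, and this is where the main obstacle lies: retract rationality is strictly weaker than stable rationality and is not detected by the vanishing of any single exact-sequence invariant. Instead one uses the characterization that $\rho_G(\hat{T})$ is invertible iff some flasque representative $F$ is a direct summand of a permutation $G$-lattice. In the forward direction, splitting off the complementary summand and dualizing produces a quasi-trivial (hence rational) torus with a generic section into $T$, yielding a retraction of $T$ from an open subset of affine space. In the converse direction, a retraction $T \brokrarr \A^n \brokrarr T$ pulls back to give, over a generic point of the intermediate affine space, a splitting of the flasque resolution, which descends to invertibility of $\rho_G(\hat{T})$ via functoriality of the Picard group. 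The delicate point, which is the principal obstacle, is converting the lattice-theoretic splitting condition into the genuinely geometric existence of a section over a Zariski-open subset; this is handled through Saltman's generic torsor construction and the theory of generic splitting varieties.
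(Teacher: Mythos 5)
Your parts (1) and (2) are correct, and they flesh out exactly the route this paper sketches in the paragraphs preceding the theorem (which the paper itself does not prove, citing instead \cite{EM73}, \cite{Vos74}, \cite{Sa84a}): the geometric flasque resolution $0\to \hat{T}\to \hat{S}\to \Pic(X_L)\to 0$, the stable birational invariance of $[\Pic(X_L)]$ for the forward implications, and, for the converses, dualizing the quasipermutation (resp.\ flasque-equivalence) presentations and invoking the fact, recorded in the paper as the Endo--Miyata/Lenstra theorem, that an extension of a torus by a quasisplit torus is birationally a product. (One harmless slip: $\Pic(\PP^N_L)=\Z$, not $\Z^N$; either way the discrepancy is a permutation summand.) The direction ``$\rho_G(\hat{T})$ invertible $\Rightarrow$ $T$ retract rational'' in (3) is also essentially right: writing $F\oplus F'\cong Q$ with $Q$ permutation and adding $F'$ to the flasque resolution gives $0\to \hat{T}\to P\oplus F'\to Q\to 0$; dualizing and using vanishing of $H^1$ for the quasisplit kernel produces a rational section exhibiting $T$ as a birational retract of the torus with invertible character lattice $P\oplus F'$, which is in turn a direct factor of a quasi-trivial, hence rational, torus.

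The genuine gap is the converse of (3), ``$T$ retract rational $\Rightarrow$ $\rho_G(\hat{T})$ invertible.'' A retraction $T\brokrarr \A^n\brokrarr T$ does not ``pull back to a splitting of the flasque resolution over a generic point,'' and even if it did, generic-point data cannot detect invertibility. Splitting the torsor $T_P\to T$ (with structure group the torus $T_F$ dual to the flasque lattice $F$) at the generic point is a statement about a class in $H^1(k(T),T_F)$, and vanishing of such field-level $H^1$ does not force $F$ to be a direct summand of a permutation lattice: flasque-and-coflasque lattices need not be invertible, so no $H^1$ computation over a single field can close the argument. Retract rationality is intrinsically a condition about specialization, not generic points: Saltman's proof characterizes it by the lifting property of the function field for maps to local $k$-algebras, evaluates the flasque-resolution torsor at points over such local rings (where lifting forces surjectivity of $T_P(R)\to T(R)$), and then invokes a purely lattice-theoretic lemma converting the splitting of these torsor classes over all local rings into invertibility of $F$. ``Functoriality of the Picard group'' supplies no mechanism for this descent. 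So as written your (3) is proved in one direction only; for the other you must either import \cite[Theorem 3.14]{Sa84a} wholesale, as the paper does, or carry out the local-ring/lifting argument in full.
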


\begin{defn}
A torus $T\in C(L/k)$ with permutation character lattice 
$\oplus_{i=1}^k\Z[G/H_i]$ for subgroups $H_i, i=1,\dots,k$
corresponds to an algebraic $k$-torus
$T=\prod_{i=1}^kR_{K_i/k}(\Gm)$ where $K_i=L^{H_i}$ and 
$R_{K/k}$ refers to Weil restriction of scalars.
Such tori are called quasisplit.
\end{defn}
 
\begin{theorem}~\cite{EM73,Len74}. See also~\cite[Theorem 9.5.3]{Lor05}.
 Let $L/k$ be a Galois extension with Galois group $G$.
\begin{enumerate}
\item
A torus $T\in C(L/k)$ is rational if $\hat{T}\in C(G)$ is permutation.
\item
If there exists an exact sequence $0\to S\to T\to T'\to 0$ of tori in $C(L/k)$
where $S$ is quasisplit, then $T$ is birationally equivalent to $T'\times_k S$.
\end{enumerate}
\end{theorem}

Note that the last theorem is equivalent to the following in lattice theoretic terms:

\begin{theorem}
Let $L/k$ be a Galois extension of fields with $G=\Gal(L/k)$.
Then
\begin{enumerate}
\item $L(P)^G$ is rational over $L^G$ if $P$ is a permutation $G$-lattice.
\item If $0\to M\to N\to P\to 0$ is an exact sequence of $G$-lattices with 
$P$ $G$-permutation, then $L(N)\cong L(M)(P)$ as $G$ fields and so
$L(N)^G\cong L(M)(P)^G$ is rational over $L(M)^G$.
\end{enumerate}
\end{theorem}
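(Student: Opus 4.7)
The plan is to prove both parts by invoking Speiser's theorem (the semilinear form of Hilbert 90): for any finite Galois extension $F/F_0$ with group $G$ and any finite-dimensional $F$-vector space $V$ carrying an $F$-semilinear action of $G$, one has $V = V^G \otimes_{F_0} F$, so that $V$ admits an $F$-basis of $G$-fixed vectors.

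For part (1), fix a $\Z$-basis $\{e_1,\dots,e_r\}$ of $P$ permuted by $G$, and set $x_j := e^{e_j}\in L[P]$, so that $L[P] = L[x_1^{\pm 1},\dots,x_r^{\pm 1}]$ and $L(P) = L(x_1,\dots,x_r)$. The $L$-subspace $V := \sum_j L\cdot x_j$ of $L[P]$ is $G$-stable and carries an $L$-semilinear $G$-action. By Speiser's theorem applied to $L/k$, there exist $y_1,\dots,y_r \in V^G$ forming an $L$-basis of $V$. The change-of-basis matrix in $\GL_r(L)$ yields $L(y_1,\dots,y_r) = L(x_1,\dots,x_r) = L(P)$, and since the $y_j$ are algebraically independent over $L$ (by a transcendence-degree count) and are $G$-fixed, one obtains $L(P)^G = k(y_1,\dots,y_r)$, which is rational over $k = L^G$.

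For part (2), since $P$ is free as an abelian group, choose a $\Z$-section $s:P\to N$ of the quotient map $N\to P$. Writing each element of $N$ uniquely as $m + s(p)$ with $m\in M$, $p\in P$, one obtains $L[N] = L[M]\bigl[y_{p_1}^{\pm 1},\dots,y_{p_r}^{\pm 1}\bigr]$, where $\{p_1,\dots,p_r\}$ is a $\Z$-basis of $P$ permuted by $G$ and $y_{p_j} := e^{s(p_j)}$. Passing to fraction fields gives $L(N) = L(M)(y_{p_1},\dots,y_{p_r})$ as $L$-fields, with the $y_{p_j}$ algebraically independent over $L(M)$. The $G$-action is the twisted permutation $g\cdot y_{p_j} = e^{c(g,p_j)}\,y_{p_{g(j)}}$, where $c(g,p) := g\cdot s(p) - s(g\cdot p) \in M$. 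Now apply Speiser's theorem to the Galois extension $L(M)/L(M)^G$ (which has Galois group $G$, since $G$ acts faithfully on $L \subset L(M)$) and to the $G$-stable, finite-dimensional $L(M)$-subspace $W := \sum_j L(M)\cdot y_{p_j} \subset L(N)$: this produces an $L(M)$-basis $\{w_1,\dots,w_r\}$ of $W$ with $w_i \in W^G$, whence $L(N) = L(M)(w_1,\dots,w_r)$ with $G$ acting trivially on the $w_i$. Exactly the same argument, applied to $L(M)(P) = L(M)(y'_{p_1},\dots,y'_{p_r})$ on which $G$ acts by pure permutation of the $y'_{p_j}$, produces $L(M)(P) = L(M)(w'_1,\dots,w'_r)$ with trivial $G$-action on the $w'_i$. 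Both fields are therefore $G$-equivariantly isomorphic to $L(M)(t_1,\dots,t_r)$ with trivial action on the $t_i$, so $L(N)\cong L(M)(P)$ as $G$-fields; taking invariants then gives $L(N)^G \cong L(M)^G(w'_1,\dots,w'_r)$, rational over $L(M)^G$.

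The only subtlety lies in the second application of Speiser's theorem, where the ambient Galois field is $L(M)$ rather than $L$: one must check that $G$ still acts faithfully on $L(M)$ (immediate from its faithful action on $L$) and observe that the twisted cocycle $c(g,p) \in M$ is absorbed silently by the theorem. That is, one never needs $c$ to vanish in any cohomology group, only to change basis so that its effect disappears, which is precisely the content of Speiser.
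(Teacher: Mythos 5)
Your proof is correct. The paper does not prove this theorem itself --- it states it as the lattice-theoretic reformulation of the preceding theorem on tori and cites Endo--Miyata, Lenstra, and \cite[Theorem 9.5.3]{Lor05} --- and your argument via Speiser's theorem (semilinear Hilbert 90), including the choice of a $\Z$-section $s:P\to N$ and the absorption of the twisting cocycle $c(g,p)\in M$ by a change of basis over $L(M)$, is precisely the standard proof given in those references.
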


\section{Families of Rational algebraic $k$-tori}

In this section, we will gather families of rational algebraic $k$-tori from
results in the literature.  

\begin{defn} 
Let $M$ be a $G$-lattice for a finite group $G$.  If all algebraic tori with character lattice $\Res_H^G(M)$ and splitting group $H$ are rational for any subgroup $H$ of $G$, we call $M$ \emph{hereditarily rational}.  Note that a $G$-lattice $M$ is 
 hereditarily  rational if and only if for  any subgroup $H$ of $G$
and for any Galois extension $K/k$ with Galois group $H$, 
the function field $K(M)^H$ is rational over $K^H$.

Let $T$ be an algebraic $k$-torus of dimension $r$, $h_T:\cG_k\to \GL_r(\Z)$
the associated continuous representation and $W_T=h_T(\cG_k)\le \GL_r(\Z)$.
We call $T$ \emph{hereditarily  rational} if the associated $W_T$-lattice $\hat{T}$
is hereditarily rational.
\end{defn}

\begin{remark} In the definition of hereditarily rational algebraic $k$-torus, rationality is completely determined by the structure of its character lattice as a $\cG_k$ representation.
Note that an algebraic $k$-torus $T$ is determined uniquely
by the $G$-lattice $\hat{T}$ and the Galois extension $L/k$ with Galois group
$G$.  Every $G$ module $\hat{T}$ determines many algebraic $k$-tori
corresponding to different Galois extensions $L/k$ with Galois group $G$
and these tori may be non-isomorphic over $k$.  
Stable rationality and retract rationality of an algebraic $k$-torus only 
depend on the $G$-lattice $\hat{T}$.  However it is not clear whether this
is true for rationality, although the author does not know of a counterexample.
A reason for caution is given by the example given in~[p. 54]\cite{Vos98} of 2 norm one 
tori $R^{(1)}_{L_i/\Q}(\Gm)$, $i=1,2$ corresponding to distinct biquadratic
extensions which are not even stably birationally equivalent over $\Q$.   

\end{remark}

We will now produce families of examples of hereditarily rational algebraic $k$-tori.  These are gathered from known examples in the literature, which will be cited accordingly.

\begin{prop}
Let $T$ be an algebraic $k$-torus with Galois splitting field $K/k$, 
$G=\Gal(K/k)$ and character lattice $M$.  Then $T$ is hereditarily rational in the following cases:
\begin{enumerate}
\item~\cite[1.4]{Len74} $M$ is a permutation $G$-lattice.  ($T$ is then a quasi-split torus).
\item~\cite[4.8]{Vos98} $M=I_X$ is an augmentation ideal for a $G$ set $X$.
\item~\cite[8.2, Example 5]{Vos98} $M$ is a sign-permutation $G$-lattice.
\item~\cite[1.3]{Len74} $T_M$ is $G$-equivariantly linearisable.    
\item~\cite{Kly88,Flo1} $M=I_X\otimes I_Y$ is a tensor product of augmentation ideals
for finite $G$-sets $X,Y$ with $\gcd(|X|,|Y|)=1$.
\item~\cite[4.9, Example 7]{Vos98} $M$ is the root lattice for the $G_2$ root system for the group $G=W(G_2)$.
\end{enumerate}
\end{prop}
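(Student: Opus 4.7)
The plan is to reduce each of the six items to the cited rationality result at the level of each subgroup $H \le G$. For each item I would verify that the structural property of the $G$-lattice $M$ named in the hypothesis is preserved under the restriction functor $\Res_H^G$: once this closure is established, the cited result applied to $\Res_H^G(M)$, viewed as an $H$-lattice, gives rationality of the associated $H$-torus over any Galois $H$-extension, which is exactly the hereditary rationality defined above.

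Cases (1)--(5) reduce to simple closure-under-restriction properties. For (1), the double-coset decomposition
$$\Res_H^G(\Z[G/K]) = \bigoplus_i \Z[H/(H\cap g_i K g_i^{-1})]$$
shows that permutation $G$-lattices restrict to permutation $H$-lattices, so Lenstra's theorem applies at each $H$. For (2), restricting the defining sequence $0 \to I_X \to \Z[X] \to \Z \to 0$ term-by-term gives $\Res_H^G(I_X) = I_{X|_H}$, still an augmentation ideal, so the cited result of Voskresenskii applies. Case (3) is analogous: a sign-permutation $\Z$-basis for $G$ remains a sign-permutation $\Z$-basis when the action is restricted to any subgroup. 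For (4), any $G$-equivariant birational linearisation of $T_M$ is automatically $H$-equivariant, so Lenstra's linearisability theorem applies at each $H$. For (5), $\Res_H^G(I_X \otimes I_Y) = I_{X|_H} \otimes I_{Y|_H}$, and the cardinalities $|X|,|Y|$ are unchanged by restriction, so the coprimality hypothesis of Klyachko--Florence is preserved. For (6), I would invoke the identification recorded in Section~2.1: the $G_2$ root lattice coincides with the $A_2$ root lattice as an $\Aut(A_2) = S_3 \times C_2 = W(G_2)$-lattice, and the latter is the tensor product $I_{X_3} \otimes I_{Y_2}$ with $|X_3| = 3$, $|Y_2| = 2$. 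Since $\gcd(3,2) = 1$, this falls under case (5), which has just been shown to produce hereditarily rational tori.

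The main obstacle is essentially bookkeeping — verifying the five closure properties above and matching each structural class to the correct citation. The most delicate of these verifications is case (4), where one has to read the definition of linearisability carefully to confirm that it is inherited by subgroup actions; but since a $G$-equivariant birational map to linear affine $n$-space is, by restriction, $H$-equivariant for every $H\le G$, this inheritance is immediate. All remaining work is mechanical.
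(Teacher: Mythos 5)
Your proposal is correct, and for items (1)--(5) it is exactly the paper's argument: the paper proves each case by checking that the structural hypothesis is closed under $\Res_H^G$ (permutation lattices restrict to permutation lattices, $(I_X)_H=I_{X_H}$, sign-permutation bases restrict to sign-permutation bases, $I_X\otimes I_Y$ restricts to a tensor product of augmentation ideals of the same coprime cardinalities) and then invoking the cited rationality result for the subgroup. The one genuine divergence is item (6). The paper proves it geometrically, following Voskresenskii: the birational action of $W(G_2)$ on $\GG_m^2$ extends to $(\PP^1)^2$, so the torus is an open subset of a $k$-form of $\PP^1\times\PP^1$, which is rational once it has a point. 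You instead deduce (6) from (5) via the identification $\Z G_2\cong I_{X_3}\otimes I_{Y_2}$ as an $\Aut(A_2)=S_3\times C_2$-lattice, with $\gcd(3,2)=1$. This is legitimate --- indeed the paper itself observes, inside its proof of (5), that the Klyachko--Florence case ``recovers the result for the root lattice of $G_2$'' through exactly this decomposition --- and your route buys uniformity (every case reduces to a restriction-closure check), at the cost of losing the explicit geometric model of the torus as a form of $\PP^1\times\PP^1$, which the paper keeps as independently useful information. One small point you flag as delicate, case (4): note that hereditary rationality quantifies over \emph{all} Galois $H$-extensions, not only those of the form $K/K^H$ for a fixed $G$-extension $K/k$; your one-line restriction argument, like the paper's identical assertion, implicitly uses that the linearising birational map is given by field-independent formulas (as in the substitution $Y_i=\frac{1-X_i}{1+X_i}$ for sign-permutation lattices), so it is valid at the paper's own level of rigor.
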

\begin{proof}
\begin{enumerate}
\item
Any permutation $G$-lattice $P$ is hereditarily rational since 
$K(P)^G$ is rational over $k$ for any Galois $G$ extension $K/k$~\cite[1.4]{Len74} and permutation
lattices are preserved under restriction.
Since quasi-split tori are precisely those with permutation character
lattices, these tori are hereditarily rational.
More explicitly, if $P=\oplus_{i=1}^r\Z[G/H_i]$ is a permutation $G$-lattice
where $H_1,\dots, H_r$ are subgroups of $G$, and $L/k$ is a Galois extension
with Galois group $G$, the corresponding quasi-split torus
$\prod_{i=1}^rR_{K_i/k}(\Gm)\in C(L/k)$ is hereditarily rational.
\item
Let $P=\Z X$ be a permutation $G$-lattice corresponding to any $G$ set $X$.
Then $\epsilon_X:\Z X\to \Z, x\to 1$ is a $G$ invariant surjection.
The augmentation ideal $I_X=\ker(\epsilon_X)$ is an hereditarily rational $G$ 
lattice. Suppose $\Z X=\oplus_{i=1}^s \Z[G/H_i]$.
 Given a Galois $G$ extension $K/k$, the exact sequence
$0\to I_X\to \Z X\to \Z\to 0$
corresponds to the exact sequence of $k$-tori
$$1\to \GG_{m,k}\to R_{F_1/k}(\Gm)\times \cdots \times R_{F_s/k}(\Gm)\to T\to 1$$
where $K_i/k$, $i=1,\dots,s$ are intermediate field extensions of $L/k$
such that $L/K_i$ is Galois with group $H_i$ for each $i$.
An algebraic $k$-torus with character lattice $I_X$ is given by 
$T=\prod_{i=1}^sR_{K_i/k}(\Gm)/\Gm$.
Since each $R_{F_i/k}(\Gm)$ can be identified with an open set of 
$R_{F_i/k}(\A^1)=\A^{n_i}_k$, $T=\prod_{i=1}^sR_{F_i/k}(\Gm)/\Gm$ admits an open embedding into 
projective space and hence is rational.
Note that for any subgroup $H$ of $G$, $(I_X)_H$ is also such an augmentation
ideal and so any algebraic torus with this character lattice would be 
rational by the same argument.
\item A geometric proof that a torus with sign permutation character lattice is rational is given  in 
~\cite[8.2, Example 5]{Vos98}.  Note here that an orthogonal integral representation corresponds to a sign permutation lattice in our language. The restriction of a sign permutation lattice to any subgroup is again a sign permutation lattice, so that tori with sign permutation character lattices are hereditarily rational.
\item $T_M$ is $G$-equivariantly linearisable if for any Galois $G$-extension
$K/k$, we have that $K(M)$
is $G$-equivariantly birational to $K(V)$ where $G$ acts $k$ linearly on $V$.
Then as $G$-fields, $K(M)\cong K(V)$.  Since $G$ acts faithfully on $K$ and linearly on $V$, $K(M)^G\cong K(V)^G$ is rational over $K^G$.
If $K(M)$ is $G$-equivariantly birationally linearisable, then  $K(\Res^G_H(M))$ is $H$ equivariantly birationally linearisable for any subgroup $H$ of $G$.
So a torus with Galois splitting field $K$ and character lattice $M$ is hereditarily rational.
Note that this gives an alternative proof that an  algebraic torus with a (sign) permutation or augmentation ideal character lattice is hereditarily rational.
For algebraic tori with permutation character lattices, this is clear.
For algebraic tori with sign permutation character lattices, 
$K(X_1,\dots,X_n)=K(Y_1,\dots,Y_n)$ where $Y_i=\frac{1-X_i}{1+X_i}$ for all $i=1,\dots,n$.  Then $\sigma(Y_i)=\pm Y_j$ for all $i=1,\dots, n$ and so the action of $G$ on $V=\sum_{i=1}^nkY_i$ is linear.
\item 
A result of Klyachko~\cite{Kly88} with a simpler proof given by Florence~\cite{Flo1} shows that for any
finite $\cG_k$ sets $X$ and $Y$ which are relatively prime, an algebraic $k$-torus with character lattice $I_X\otimes I_Y$ is rational.
This shows that for any finite group $G$, and any 2 relatively prime $G$ sets
$X,Y$, the $G$-lattice  $I_X\otimes I_Y$ is hereditarily rational.
Note that any $G$ set $Y$ of size 2 gives a rank 1 sign lattice $I_Y$.
Then for any other $G$ set $Y$ of odd order, $I_X\otimes I_Y$ is hereditarily 
rational.  In particular, this recovers the result for the root lattice of $G_2$
since it may be expressed as a tensor product 
$I_{X_3}\otimes I_{Y_2}$ as described earlier.  More generally, it shows that a algebraic $k$-torus
with character lattice $(\Z \bA_{2k},\Aut(\bA_{2k}))$~\cite[p.102]{Vos98} is hereditarily rational.

\item Note that the root lattice for the root system $\bG_2$ can also be described as the action of the automorphism group of the root system $\bA_2$ on the 
root lattice of $\bA_2$. 
The root lattice for $G_2$ restricted to $S_3$ is $I_{S_3/S_2}$ and $C_2$ acts as $-1$ on the lattice.  For an algebraic $k$-torus $T$ with character lattice $G_2$,
the group $W(G_2)$ acts regularly on $\GG_m^2$.  Voskresenskii shows that the action
of $W(G_2)$ on $\GG_m^2$ can be extended to a birational action on $(\PP^1)^2$
and so $T$ is an open part of the $k$-variety $((\PP^1)^2\otimes_k k_s)/W(G_2)$
which is a $k$-form of $\PP^1\times \PP^1$.  Since any $k$-form of $\PP^1\times \PP^1$
is rational if it has a point, $T$ is rational.
\end{enumerate}
\end{proof}

\begin{remark} Let $M=\oplus_{i=1}^rM_i$ be the direct sum of $G$-lattices 
$M_i$, $i=1,\dots,k$.  If $T_i,i=1,\dots,r$ are algebraic $k$-tori with character lattices $M_i$, then $T=T_1\times_k \cdots \times_k T_r$ is the algebraic $k$-torus with character lattice $\oplus_{i=1}^rM_i$.  If $K/k$ is a $G$-Galois extension, $K(M)$ is the composite of the $G$-fields $K(M_i)$  and so $K(M)^G$ is the composite of the fields $K(M_i)^G$.  So if each $T_i$ is (hereditarily) rational, so is $T$. 
So the direct sum of hereditarily rational $G$-lattices is hereditarily rational.
\end{remark}

\begin{defn} Let $M$ be a $G$-lattice of rank $r$.
Let $P_n=\oplus_{i=1}^n\Z\e_i$ be a permutation lattice for the wreath product $G^n\rtimes S_n$ on which $G^n$ acts trivially and $S_n$ acts by permuting the basis. Then $M\otimes P_n$ is the natural $G^n\rtimes S_n$ lattice of rank $rn$ for the wreath product.  Let $M_i=M\otimes \Z\e_i$.
Then $M\otimes P_n=\oplus_{i=1}^nM_i$.  Let $\pi_i:G^n\to G$ be the $i$th projection.  The action of $G^n\rtimes S_n$ on $M\otimes P_n$ is then given by
$$g\cdot (m\otimes \e_i)=(\pi_i(g)\cdot m)\otimes \e_i, g\in G^n$$
$$\sigma\cdot (m\otimes \e_i)=m\otimes \e_{\sigma(i)}, \sigma\in S_n$$

Suppose $T$ is an algebraic $F$-torus with character lattice $M$ and Galois
splitting field $K/F$ with Galois group $G$. 
If $F/k$ is a separable extension of degree $n$, then $R_{F/k}(T)$ is an algebraic $k$-torus with character lattice $M\otimes P_n$.
\end{defn}

\begin{prop}~\label{prop:wreath} Let $M$ be a $G$-lattice of rank $r$.
Using the notation from the definition,
let $M\otimes P_n$ be the natural $G^n\rtimes S_n$ lattice of rank $rn$.

Let $T^{n,\wr}_M$
be an algebraic torus with character lattice $M\otimes P_n$.
Then if any algebraic torus with character lattice $M$  is (stably) rational, so is $T^{n,\wr}_M$.
If any algebraic torus with character lattice $M$ is linearisable, so is $T^{n,\wr}_M$.
If any algebraic torus with character lattice $M$ is hereditarily rational, so is $T^{2,\wr}_{M}$.
\end{prop}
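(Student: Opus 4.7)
The plan is to realise $T^{n,\wr}_M$ explicitly as a Weil restriction and exploit that Weil restriction preserves the relevant birational invariants. Given a Galois $W$-extension $L/k$ with $W = G^n \rtimes S_n$, I would let $F = L^{G^n}$, so that $[F:k] = n$, and $K = L^{G^{n-1} \rtimes S_{n-1}}$, where $G^{n-1}$ denotes the product of the $G$-factors indexed by $\{2,\ldots,n\}$ and $S_{n-1}$ is the stabiliser of $1$ under the $S_n$-action. Then $K/F$ is Galois with group $G$, and identifying $M \otimes P_n = \Ind^W_{G^n \rtimes S_{n-1}}(M_1)$ yields $T^{n,\wr}_M = R_{F/k}(T)$, where $T$ is the $F$-torus with character lattice $M$ split by $K/F$.

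For parts (1) and (2), I would then invoke the standard facts that $R_{F/k}(\A^r_F) \cong \A^{rn}_k$ and that $R_{F/k}$ commutes with products and preserves birational maps. Hence if $T$ is $F$-rational (respectively, if $T \times_F \A^s_F$ is $F$-rational), then $R_{F/k}(T)$ is $k$-birational to $\A^{rn}_k$ (respectively, $R_{F/k}(T) \times_k \A^{sn}_k$ is $k$-rational), giving (stable) $k$-rationality of $T^{n,\wr}_M$. For linearisability, if $Y_1,\ldots,Y_r$ linearise $K(M)$ $G$-equivariantly over $K^G$, then setting $Y_{i,j}$ equal to the same rational expressions applied to the coordinates of the $i$-th copy of $M$ for $i = 1, \ldots, n$ produces $W$-linearising coordinates on $L(M \otimes P_n)$: the $G^n$-action is $k$-linear block by block, and the $S_n$-action permutes the blocks $k$-linearly, so together $W$ acts $k$-linearly on the span, which is $V \otimes P_n$.

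For part (3) with $n = 2$, I would carry out a subgroup analysis of $W = G^2 \rtimes C_2$ and verify rationality of the corresponding $k$-torus for each subgroup $U \le W$. In \emph{Case A} ($U \le G^2$), the restriction $\Res^W_U(M \otimes P_2) = \Res^{G^2}_U(M_1) \oplus \Res^{G^2}_U(M_2)$ decomposes as a direct sum in which each summand is inflated from the $\pi_i(U)$-lattice $\Res^G_{\pi_i(U)}(M)$; hereditary rationality of $M$ as a $G$-lattice makes each corresponding $k$-torus rational, and products of rational tori are rational. In \emph{Case B} ($U \not\le G^2$), set $U_0 = U \cap G^2$, of index $2$ in $U$; since $G^2 \trianglelefteq W$, Mackey's formula collapses to give $\Res^W_U(M \otimes P_2) \cong \Ind^U_{U_0}(M_1\vert_{U_0})$. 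The corresponding $k$-torus is therefore realised as a Weil restriction $R_{\tilde F/\tilde k}(\tilde T)$ with $[\tilde F:\tilde k] = 2$, where $\tilde T$ has character lattice $\Res^G_{\pi_1(U_0)}(M)$; hereditary rationality of $M$ makes $\tilde T$ rational over $\tilde F$, and the Weil restriction argument of part (1) delivers $\tilde k$-rationality.

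The main obstacle I expect is the case analysis in part (3): specifically, executing the Mackey identification cleanly in Case B and then recognising the resulting induced $U$-lattice as the character lattice of a Weil restriction from a quadratic extension. Once those identifications are in place, the conclusions follow from the Weil restriction argument combined with the hypothesis that $M$ is hereditarily rational. This also explains why the analogous hereditary statement is not asserted for $n \ge 3$: subgroups of $S_n$ with non-trivial orbit structures on $\{1,\ldots,n\}$ would produce induced lattices that do not collapse to the simple two-case dichotomy above.
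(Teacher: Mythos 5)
Your proof is correct, and it takes a genuinely different route from the paper's. The paper's own argument never invokes Weil restriction (even though the Definition immediately preceding the Proposition records that $M\otimes P_n$ is the character lattice of $R_{F/k}(T)$): it works entirely at the level of function fields, writing $L(M\otimes P_n)^{G^n}$ as a composite of rational extensions $L^{G^n}(f_1^i,\dots,f_r^i)$ and observing that $S_n$ permutes the generators $f_j^i$, so that invariants remain rational; for stable rationality it tensors a quasi-permutation resolution $0\to M\to Q_1\to Q_2\to 0$ with $P_n$ and applies the Endo--Miyata criterion; and for $n=2$ it treats the two cases $H\le G^2$ and $H/(H\cap G^2)\cong C_2$ by hand, choosing $h\in H\setminus N$ and using $h^2\in N$ to show $H/N$ permutes $\{f_1,\dots,f_r,hf_1,\dots,hf_r\}$ --- which is exactly the field-theoretic shadow of your Mackey identification $\Res^W_U(M\otimes P_2)\cong \Ind^U_{U_0}(M_1\vert_{U_0})$ followed by restriction of scalars along the quadratic extension $L^{U_0}/L^U$. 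What your approach buys is functoriality: once one grants that $R_{F/k}(\A^r_F)\cong \A^{rn}_k$, that $R_{F/k}$ sends birational maps of geometrically integral varieties to birational maps (density of $R_{F/k}(U)$ in $R_{F/k}(T)$ is checked after base change to the algebraic closure, where it becomes a product), and that it commutes with products, then parts (1)--(2) and Case B of (3) follow at once, and stable rationality is obtained geometrically without the lattice-theoretic criterion; what the paper's approach buys is a self-contained argument at the field level requiring none of these geometric inputs.

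Two small repairs, neither a genuine gap. First, your field $F$ should be $L^{G^n\rtimes S_{n-1}}$, the fixed field of the stabiliser appearing in your own induction $M\otimes P_n = \Ind^W_{G^n\rtimes S_{n-1}}(M_1)$; the field $L^{G^n}$ you wrote has degree $n!$ over $k$, not $n$, though your stated degree and choice of subgroup make the intended field unambiguous. Second, in the linearisability step, applying ``the same rational expressions'' blockwise is $W$-equivariant only when the linearising substitution has coefficients fixed by the relevant groups (e.g.\ in the base field, as in the sign-permutation example $Y_i=\frac{1-X_i}{1+X_i}$); this is the same implicit hypothesis behind the paper's one-line assertion that $L(M\otimes P_n)\cong L(V\otimes P_n)$ as $G^n\rtimes S_n$-fields, so you match the paper's level of rigour there.
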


\begin{proof} Let $L/k$ be a Galois $G^n\rtimes S_n$-extension.
For preciseness, we will write $G^n=G_1\times \cdots \times G_n$ where $G_i=G$
and $M\otimes P_n=\oplus_{i=1}^nM_i$ where $M_i=M$.  Here $G_i$ acts on $M_i$ as $G$ acts on $M$ and $G_j$ acts trivially on $M_i,i\ne j$
so that $M\otimes P_n$ restricted to $G^n$ is $M^n$.
Now $L(M^n)$ is a composite of $G^n=\prod_{i=1}^nG_i$ fields $L(M_i)$.
Since any algebraic torus with character lattice $M$ is rational,
we have that $L(M_i)^{G^n}=L^{\prod_{j\ne i}G_j}(M_i)^{G_i}=L^{G^n}(f_1^i,\dots,f_r^i)$, for all
$i=1,\dots,n$.  
Then $L(M)^{G^n}=\prod_{i=1}^n(L^{\prod_{j\ne i}G_j}(M_i))^G_i=L^{G^n}(f_1^1,\dots,f_r^1,\dots,f_1^n,\dots,f_r^n)$
and $L(M\otimes P_n)^{G^n\rtimes S_n}=(L(M^n)^{G^n})^{S_n}$.
Since $S_n$ permutes the $n$ copies of $M$ and correspondingly permutes the 
fields $L^{\prod_{j\ne i}G_j}$, it permutes the corresponding invariants, and so $L(M\otimes P_n)^{G^n\rtimes S_n}$ is rational over $L^{G^n\rtimes S_n}$. 

If an algebraic torus with character lattice $M$ is $G$-linearisable,
then $L(M)\cong L(V)$ as $G$ fields where $G$ acts linearly on $V$.
  Then $L(M\otimes P_n)\cong L(V\otimes P_n)$
as $G^n\rtimes S_n$ fields.  

If an algebraic torus with character lattice $M$ is stably rational then 
$M$ is quasi-permutation and satisfies an exact sequence 
$$0\to M\to Q_1\to Q_2\to 0$$
where $Q_1,Q_2$ are permutation $G$-lattices.
Then 
$$0\to M\otimes P_n\to Q_1\otimes P_n\to Q_2\otimes P_n\to 0$$
is an exact sequence of $G^n\rtimes S_n$ lattices with $Q_i\otimes P_n$ 
$G^n\rtimes S_n$-permutation for $i=1,2$.

Let $n=2$ and let $H$ be any subgroup of $G^2\rtimes S_2$.
Then either $H\le G^2$ or $H/H\cap G^2\cong C_2$.
Let $N=H\cap G^2$.  Then $L(M_1\oplus M_2)=L(M_1)L(M_2)$ is a composite of $N$ fields.
Let $\pi_i$ be the restriction of the 
$i$th projection map $G^2\to G$ to $N$. Then $L(M_i)^N=L^{\ker(\pi_i)}(M_i)^{\pi_i(N)}$.  Since $\pi_i(N)\le G$, we see by assumption that $L(M_i)^N$ is rational over $L^N$, $i=1,2$ and so $L(M_1\oplus M_2)^N=L(M_1)^NL(M_2)^N$
is rational over $L^N$ as the composite of rational extensions.
So we need only consider the case when $H/N\cong C_2$.  Let $h\in H\setminus N$.  Then $h(M_1)=M_2$ and $h(L)=L$ shows that $L(M_2)=h(L(M_1))$.
We have that $L(M_1\oplus M_2)^N=L(M_1)^N(h(L(M_1))^N=L(M_1)^Nh(L(M_1)^N)$
since $N$ is normal in $H$.  Since $L(M_1)^N=L^N(f_1,\dots,f_r)$,
$h(L(M_1)^N)=L^N(hf_1,\dots,hf_r)$ and so 
$L(M_1\oplus M_2)^N=L^N(f_1,\dots,f_r,hf_1,\dots,hf_r)$.  Since $h^2\in N$, 
$h(hf_i)=f_i$ for all $i=1,\dots,r$.  Then $H/N$ acts by permutations on 
$L(M_1\oplus M_2)^N$ and so $L(M\otimes P_2)^H=L^N(f_1,\dots,f_r,hf_1,\dots,hf_r)^{H/N}$ is rational over $L^H$ as required.
\end{proof}

\begin{remark} If all algebraic tori with character lattice $M$ are hereditarily rational, an algebraic torus, $T^{n,\wr}_M$ with character lattice $M\otimes P_n$ is rational and hence stably rational.
This means that an algebraic torus with character lattice $M\otimes P_n$ corresponding to a subgroup of $G^n\rtimes S_n$ must also be stably rational.
However it is not clear whether such an algebraic torus must be hereditarily rational if $n\ge 3$.  
However for the case of 4 dimensional algebraic tori, the above result will be sufficient.
\end{remark}


\begin{prop} Let $M$ be an hereditarily  rational faithful $G$-lattice
and let $L$ be a $G$-lattice such that there exists an exact sequence
$$0\to M\to L\to P\to 0$$ 
for some permutation $G$-lattice $P$.
Then an algebraic $k$-torus with character lattice $(L,G)$ is hereditarily rational.
More precisely, if $T$ and $T'$ are algebraic $k$-tori fitting into an exact
sequence  
$$1\to S\to T\to T'\to 1$$
where $S$ is a quasi-split $k$-torus, then $T$ is birationally equivalent
to $T'\times_k S$.  If $T'$ is (hereditarily) $k$-rational, so is $T$.   
\end{prop}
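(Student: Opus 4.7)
The plan is to dualize the given short exact sequence of $G$-lattices to a short exact sequence of algebraic tori and then apply the Endo-Miyata/Lenstra theorem cited just above, which says that a torus extension by a quasi-split torus is birationally a product. Specifically, I would let $T$, $T'$, and $S$ denote algebraic $k$-tori with character lattices $L$, $M$, and $P$ respectively. The exact sequence
\[
0\to M\to L\to P\to 0
\]
then corresponds to an exact sequence of algebraic $k$-tori
\[
1\to S\to T\to T'\to 1,
\]
and $S$ is quasi-split because $P$ is a permutation $G$-lattice. The Endo-Miyata/Lenstra theorem therefore yields that $T$ is birationally equivalent to $T'\times_k S$. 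Since $T'$ is $k$-rational by the hypothesis that $M$ is hereditarily rational and $S$ is $k$-rational as a quasi-split torus (part (1) of the previous proposition), the product $T'\times_k S$ is $k$-rational by the direct sum remark, and hence so is $T$.

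To upgrade rationality to hereditary rationality, I would fix an arbitrary subgroup $H\le G$ and restrict the entire sequence to $H$. Restriction is exact and carries $G$-permutation lattices to $H$-permutation lattices, so we obtain a short exact sequence
\[
0\to \Res^G_H M\to \Res^G_H L\to \Res^G_H P\to 0
\]
of the same form as the original, with $\Res^G_H M$ hereditarily rational by the hypothesis on $M$. Applying the argument of the preceding paragraph with $H$ in place of $G$ shows that any algebraic $k$-torus with character lattice $(\Res^G_H L, H)$ is rational, which is exactly the hereditary rationality of $(L,G)$.

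The real content is entirely contained in the cited Endo-Miyata/Lenstra theorem; there is no genuine obstacle once one has it. The faithfulness hypothesis on $M$ plays only a minor bookkeeping role: since $M\subseteq L$, faithfulness of the $G$-action on $M$ propagates to $L$, so the associated algebraic $k$-torus truly has splitting group $G$ and the notion of hereditary rationality is unambiguously defined for $(L,G)$.
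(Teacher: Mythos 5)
Your proposal is correct and follows essentially the same route as the paper: the paper works directly with the lattice/field formulation, showing $K(L)\cong K(M)(P)$ as $G$-fields so that $K(L)^G$ is rational over $K(M)^G$ and hence over $K^G$, while you invoke the dual torus-language version $1\to S\to T\to T'\to 1$ of the very same Endo--Miyata/Lenstra theorem, which the paper itself notes is equivalent. Your restriction-to-subgroups step matches the paper's closing remark that the argument applies verbatim to $\Res^G_H L$ for any $H\le G$.
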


\begin{proof}
Let $K/k$ be a Galois extension with Galois group $G$. Then 
$K(L)\cong K(M)(P)$ as $G$-fields. Then $K(L)^G\cong K(M)(P)^G$ is rational over
$K(M)^G$ which is in turn rational over $K^G$~\cite{Len74}.
The same result would hold for any subgroup $H$ and the $H$ lattice $L_H$.
\end{proof}



\section{Hereditarily rational $k$-tori in dimensions 2,3,4}

We now have enough information to show that all but 10 of the stably rational
algebraic $k$-tori of dimension 4 are hereditarily rational. 

We will first quickly illustrate our approach for the rank 2 and 3 cases
due to Voskresenskii and Kunyavskii respectively.
Note in order to do this, we will need to identify certain $G$-lattices
corresponding to conjugacy classes of finite subgroups of $\GL_r(\Z)$.
We remark that for a $G$-lattice $M$, we could use the character to 
identify the corresponding $\Q G$ module $\Q M$, but that 
is not sufficient to identify its isomorphism type as a $G$-lattice.
We identify $G$-lattices up to isomorphism using explicit isomorphisms. 

Given a finite subgroup $G\in \GL_r(\Z)$, we will denote by $M_G$ the associated $G$-lattice of rank $r$.
For the standard basis $\e_1,\dots,\e_r$ of $M_G$, and given $g=(a_{ij})_{i,j=1}^r\in \GL_r(\Z)$, the action of $G$ on $M_G$ will be given on \textbf{rows}
to agree with the notation in GAP and in Hoshi and Yamasaki's paper.
That is, $g\cdot \e_i=\sum_{j=1}^ra_{ij}\e_j$ for all $i=1,\dots,r$.

\begin{remark} We will frequently wish to show that a  $G$-lattice $L$ \
is isomorphic to  $I_X$ or its dual $J_X$ for some transitive $G$-set $X$ with stabilizer subgroup $H$.
Note that if there exists $\y\in L$ such that $\Z G\cdot \y=L$, $\sum_{g\in G}g\y=\0$ and $\rk(L)=[G:G_{\y}]-1$ then 
we see that for $H=G_{\y}$,
$\pi:\Z[G/H]\to L$ is a surjective $G$-homomorphism with 
$\ker(\pi)=\Z(\sum_{gH\in G/H}gH)$ so that $L\cong J_{G/H}$. 
If instead we show that $L^*\cong J_{G/H}$ in this way, then, we have
$L\cong I_{G/H}$.  
\end{remark}

\begin{notation}
A rank 1 $G$-lattice is determined by a group homomorphism $\chi:G\to \GL_1(\Z)=\{\pm 1\}$. If $\chi$ is trivial, we will write the lattice as $\Z$.
If $\chi:G\to \{\pm 1\}$ is non-trivial, it is completely determined by its kernel which is necessarily a normal subgroup $N$ of index 2 in $G$.
We will then write the rank 1 $G$ lattice corresponding to $\chi$ as $\Z^{-}_{N}$ where $N=\ker(\chi)$.
\end{notation}

\begin{notation}
(Recognising augmentation ideals and their duals).

Let $X_{n+1}=\{x_i:i=1,\dots,n+1\}$ be the standard $S_{n+1}$ set on which $S_{n+1}$ acts transitively with stabilizer subgroup $S_n$.
Let $\pi:\Z[X_{n+1}]\to J_{X_{n+1}}$ be the natural surjection.
Note that $\ker(\pi)=\Z(\sum_{i=1}^{n+1}x_i)$.
Then $\{\overline{x_i}:i=1,\dots,n\}$ forms a $Z$-basis of $J_{X_{n+1}}$
where $\overline{x_i}=\pi(x_i)$.
With respect to this basis,
$\sigma(\overline{x_i}))=\overline{x_{\sigma(i)}}, \sigma(i)\ne n+1$
and $\sigma(\overline{x_i})=-\sum_{i=1}^n\overline{x_{\sigma(i)}}, \sigma(i)=n+1$.
We will write $\rho_n:S_{n+1}\to \GL_n(\Z)$ for the representation of $S_{n+1}$ associated with $J_{X_{n+1}}$ with respect to the 
basis $\overline{X_{n+1}}$ determined on rows.
We will write $\rho_n^*:S_{n+1}\to \GL_n(\Z)$ for the dual of this representation of $S_{n+1}$ associated with $I_{X_{n+1}}\cong J_{X_{n+1}}^*$. 
Note that the matrices in the image of $\rho_n$ (resp. $\rho_n$) are either 
permutation matrices or permutation matrices with one row (resp. column)
replaced by $[-1,\dots,-1]$.  It is then easy to determine which 
permutation determined them.

We will extend the action of $S_{n+1}$ on $X_{n+1}$ to an action of 
$S_{n+1}\times C_2$ on $X_{n+1}$ by inflation.
That is, $S_{n+1}$ acts on $X_{n+1}$ as before and $C_2$ acts trivially on
$X_{n+1}$.

We will denote by $\rho_n^-:S_{n+1}\times C_2\to \GL_n(\Z)$ the 
representation associated to the $S_{n+1}\times C_2$-lattice 
$J_{X_{n+1}}\otimes \Z^-_{S_{n+1}}$ with respect to the $\Z$-basis 
$\overline{x_i}\otimes 1$. 
Then if $C_2=\langle \gamma\rangle$, note that $\rho_n^-(\sigma,1)=\rho_n(\sigma)$
and $\rho_n(\sigma,\gamma)=-\rho_n(\sigma)$ for all $\sigma\in S_{n+1}$.
The matrices in the image of $\rho_n^-$ are then also easy
to recognise.

We will denote by 
$(\rho_n^-)^*:S_{n+1}\times C_2\to \GL_n(\Z)$ the representation
corresponding to the dual lattice 
$I_{X_{n+1}}\otimes \Z_-^{S_{n+1}}=J_{Y_{n+1}}\otimes \Z_-^{S_{n+1}}$.  
Note that the root lattice $\Z A_n$ as a $W(A_n)=S_{n+1}$-lattice is $(I_{X_{n+1}})$
so  that the weight lattice $\Lambda(A_n)$ is
$J_{X_{n+1}}$ as a $W(A_n)=S_{n+1}$ lattice.
Note that $\Z A_n$ as an   $\Aut(A_n)=S_{n+1}\times C_2$ lattice is
$I_{X_{n+1}}\otimes \Z^-_{S_{n+1}}$ 
so that $\Lambda(A_n)$ is its dual
$J_{X_{n+1}}\otimes \Z^-_{S_{n+1}}$
as an $\Aut(A_n)$-lattice.

\end{notation}

\begin{notation}
With respect to the standard basis $\e_1,\dots,\e_n$, the Weyl group 
of $B_n$, has reflections $\tau_i=s_{\e_i}$, $i=1,\dots,n$ and 
$\sigma_{ij}=s_{\e_i-\e_j}$. On the root lattice $\Z B_n$, with $\Z$-basis $\e_1,\dots,\e_n$, the $\tau_i$ fix $\e_j,j\ne i$ and 
$\tau_i(\e_i)=-\tau_i(\e_i)$
and $\sigma_{ij}$ acts by swapping $\e_i$ and $\e_j$ and fixing the 
other basis elements.
So $W(B_n)=(C_2)^n\rtimes S_n$
where $C_2^n=\langle \tau_i:i=1,\dots,n\rangle$ and $S_n=\langle \sigma_{ij}:i\ne j\rangle$.
We denote
$\eta_n:W(B_n)\to \GL_n(\Z)$ 
by the representation of $W(B_n)$ corresponding to its lattice $\Z B_n$
with respect to the standard basis $\e_1,\dots,\e_n$.
Note that the images of $\eta_n$ are sign permutation matrices.
We will write elements of $W(B_n)$ as $\tau \sigma$ where $\tau\in C_2^n$
is a product of $\tau_i$ and $\sigma\in S_n$.
\end{notation}

\begin{prop}~\label{prop:d2n} Let $n$ be an odd integer 
and let $D_{2n}$ be the dihedral group 
of size $2n$ given by the presentation
$$D_{2n}=\langle \sigma,\tau: \sigma^n=1=\tau^2, \tau\sigma\tau^{-1}=\sigma^{-1}\rangle$$
Via the injective group homomorphism $\varphi:D_{2n}\to S_n$
given on generators by $\sigma\mapsto (1,2,\dots,n)$ and $\tau\mapsto \prod_{i=1}^{\frac{n-1}{2}}(i,n-i)$, $D_{2n}$ acts by restriction on the 
$S_n$-set $X_n$. 
Then 
$$J_{X_n}\cong I_{X_n}\otimes \Z^{-}_{C_n}$$
as $D_{2n}$-lattices.
Considering $X_n$ as a $D_{2n}\times C_2$-set by inflation,
then
$$J_{X_n}\otimes \Z^{-}_{D_{2n}}\cong I_{D_{2n}\times C_2/C_2\times C_2}\otimes \Z^{-}_{D_{2n}}$$
as $D_{2n}\times C_2$-lattices.
\end{prop}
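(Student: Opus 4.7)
The plan is to exhibit an explicit $D_{2n}$-equivariant isomorphism $\phi:J_{X_n}\to I_{X_n}\otimes \Z^{-}_{C_n}$ and then deduce (b) by inflation together with tensoring by appropriate one-dimensional characters of $D_{2n}\times C_2$. The key structural input is that for $n$ odd the reflection $\tau$ has a unique fixed point on $X_n$; identifying $X_n$ with $\Z/n\Z$ so that $\sigma$ acts as $+1$ and $\tau$ as $i\mapsto -i$, this fixed point is $0$. I work throughout with the adapted bases $e_i:=x_i-x_0$, $i=1,\dots,n-1$, for $I_{X_n}$, and $\bar x_1,\dots,\bar x_{n-1}$ for $J_{X_n}$ (subject to $\bar x_0=-\sum_{i=1}^{n-1}\bar x_i$).

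Set $v_0:=(e_1-e_{n-1})\otimes 1_{-}\in I_{X_n}\otimes \Z^{-}_{C_n}$. A direct check gives $\tau v_0=v_0$: the reflection $\tau$ swaps $e_1$ and $e_{n-1}$ and acts by $-1$ on $\Z^{-}_{C_n}$, so the two sign flips cancel. Define $\phi(\bar x_j):=\sigma^j v_0$ for each $j\in\Z/n\Z$. For $\phi$ to descend from $\Z[X_n]$ to $J_{X_n}$ I need $\sum_j\phi(\bar x_j)=0$; this sum equals $N_{C_n}(v_0)$ and vanishes because the restriction of $I_{X_n}\otimes \Z^{-}_{C_n}$ to $C_n$ is $I_{C_n}$, on which the cyclic-group norm always acts as zero. $D_{2n}$-equivariance is then immediate: $\sigma$-equivariance is built in, and $\tau$-equivariance follows from $\tau v_0=v_0$ together with $\tau\sigma^j\tau^{-1}=\sigma^{-j}$. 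A short calculation yields the explicit formula $\phi(\bar x_j)=(e_{j+1}-e_{j-1})\otimes 1_{-}$ (with the convention $e_0=e_n=0$), so in the chosen bases $\phi$ is represented by the $(n-1)\times(n-1)$ tridiagonal matrix with $+1$'s on the superdiagonal and $-1$'s on the subdiagonal. Its determinant $D_k$ satisfies the recursion $D_k=D_{k-2}$ with $D_2=1$, giving $D_{n-1}=1$ since $n-1$ is even, and hence $\phi$ is an integral isomorphism.

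For (b), I inflate the isomorphism of (a) to $D_{2n}\times C_2$-lattices by letting the extra $C_2$-factor act trivially on everything; the inflated $\Z^{-}_{C_n}$ then becomes the $D_{2n}\times C_2$-character with kernel $C_n\times C_2$. The Klein four subgroup $\langle\tau\rangle\times C_2$ is the point stabilizer for the inflated $D_{2n}\times C_2$-action on $X_n$, so $I_{D_{2n}\times C_2/C_2\times C_2}$ is canonically identified with the inflation of $I_{X_n}$. Tensoring both sides of the inflated (a) by $\Z^{-}_{D_{2n}}$ and combining the resulting rank-one characters then gives the displayed isomorphism; the computation reduces to bookkeeping among the three nontrivial one-dimensional characters of $D_{2n}\times C_2$.

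The main obstacle is locating the correct $v_0$: the $\tau$-invariant subspace of $I_{X_n}\otimes \Z^{-}_{C_n}$ has rank $(n-1)/2$, and most choices yield equivariant maps whose image is a proper sublattice of $J_{X_n}$ of positive index. The specific normalization $v_0=(e_1-e_{n-1})\otimes 1_{-}$ produces the tridiagonal matrix above whose determinant is exactly $\pm 1$, while other natural $\tau$-invariants typically produce determinants of much larger absolute value.
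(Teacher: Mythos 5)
Your argument is correct, and it is a close but genuine variant of the paper's. Both proofs come down to the same computation: pick a reflection-invariant vector in $I_{X_n}\otimes \Z^{-}_{C_n}$, translate it by powers of $\sigma$, and check that the $n$ translates, which sum to zero, present the lattice as the $J$-lattice of a transitive set with point stabilizer of order $2$. The paper runs this through a recognition criterion (an element $\y$ with $\Z G\cdot \y=L$, $\sum_{g}g\y=\0$ and the right stabilizer forces $L\cong J_{G/G_{\y}}$), verifying spanning by solving the triangular system $C\aa=\bb$; you instead write the map in the opposite direction, $\phi\colon J_{X_n}\to I_{X_n}\otimes \Z^{-}_{C_n}$, $\bar x_j\mapsto \sigma^j v_0$, with well-definedness coming from the norm of $C_n$ annihilating $I_{C_n}$, and conclude by a unimodularity check. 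Your packaging buys two small things. First, the recursion $D_k=D_{k-2}$, $D_2=1$ makes completely transparent where the hypothesis that $n$ is odd enters: $D_{n-1}=0$ when $n$ is even, which is exactly the failure the construction must avoid. Second, your $v_0=(x_1-x_{n-1})\otimes 1$ is honestly fixed by $\tau$, whereas the paper's $z=(x_1-x_n)\otimes 1$ is in fact fixed by $\sigma\tau$ rather than $\tau$ (one computes $\tau z=(x_n-x_{n-1})\otimes 1=\sigma^{-1}z$) -- a harmless slip there, since all involutions in $D_{2n}$ are conjugate for $n$ odd, but your choice avoids it.

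One step in your part (b) needs to be made explicit. Tensoring the inflated isomorphism by $\Z^{-}_{D_{2n}}$ produces on the right the character $\Z^{-}_{C_n\times C_2}\otimes \Z^{-}_{D_{2n}}\cong \Z^{-}_{\langle \sigma,\tau\gamma\rangle}$, whose kernel is the \emph{other} index-two dihedral subgroup of $D_{2n}\times C_2$, not $D_{2n}\times 1$; these subgroups are not conjugate, so ``bookkeeping among the three nontrivial characters'' does not by itself land you on the displayed right-hand side. The missing observation, which the paper records in its closing remark, is that $\gamma$ acts trivially on $X_n$, so $\tau$ and $\tau\gamma$ act identically on it; hence $X_n\cong (D_{2n}\times C_2)/(C_2\times C_2)$ equally well with respect to the generating set $\sigma,\tau\gamma,\gamma$, and the isomorphism you obtain is of the stated form once $\Z^{-}_{D_{2n}}$ is read against the copy $\langle\sigma,\tau\gamma\rangle$ of $D_{2n}$. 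With that one sentence added, your (b) coincides with the paper's.
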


\begin{proof}
Restricting the standard $S_n$-set $X_n$ to $D_{2n}$ via $\varphi$,
we see that $\sigma(x_i)=x_{i+1\bmod n}$ and $\tau(x_i)=x_{n-i}$
for all $i=1,\dots,n$.
In particular, $D_{2n}\cdot x_{\frac{n+1}{2}}=X_n$ and the stabilizer subgroup of $x_{\frac{n+1}{2}}$ is $\langle \tau\rangle\cong C_2$.
So $X_n\cong D_{2n}/C_2$ as a $D_{2n}$-set.

 Note that all elements of order 2 are conjugate in $D_{2n}$
so that there is a unique conjugacy class of subgroups isomorphic to $C_2$ in
$D_{2n}$.  
This implies that $J_{D_{2n}/C_2}$ and $I_{D_{2n}/C_2}$ are well-defined,
since $\Z[G/H]\cong \Z[G/gHg^{-1}]$.

By the remark, it will suffice to find an element $z$ of 
$I_{X_n}\otimes \Z^-_{C_n}$ such that the distinct elements of the orbit of $z$ under $D_{2n}$
form a $\Z$-basis for $I_{X_n}\otimes \Z^{-}_{C_n}$ and the stabilizer
subgroup of $z$ is a cyclic subgroup of order 2.

Take $z=(x_1-x_n)\otimes 1\in I_{X_n}\otimes \Z^-_{C_n}$.
We will show that $(D_{2n})_{z}=\langle \tau\rangle$, $\sum_{g\in D_{2n}}\cdot z=\0$ and $\Z D_{2n}\cdot z$ has rank $n-1$.  By our previous observations, this will imply that
$$I_{X_n}\otimes \Z^-_{C_n}=\Z D_{2n}\cdot z\cong J_{X_n}$$
as $D_{2n}$-lattices.

Note that $\tau(z)=z$.
Since 
$$\sigma^i((x_1-x_n)\otimes 1)=(x_{1+i}-x_{n+i\bmod n})\otimes 1,$$
it is clear that the stabilizer subgroup of $z$ in $D_{2n}$ is $\langle \tau\rangle$.
It is also clear that the orbit sums to zero.  
We need only check that $\Z D_{2n}\cdot z=I_{X_n}\otimes \Z^-_{C_n}$.

Let $\sum_{i=1}^nb_i(x_i\otimes 1)\in I_{X_n}\otimes \Z^-_{C_n}$.
Then $\sum_{i=1}^nb_i=0$. We need to show that we can solve
$$\sum_{i=1}^{n-1}a_i(x_i-x_{n+i\bmod n})\otimes 1 
=\sum_{i=1}^{n}b_i(x_i\otimes 1)$$ 
for some unique $a_i$ if $\sum_{i=1}^{n}b_i=0$.
We obtain the following equations: 
$$a_i-a_{i+1}=b_i, i=1,\dots,n-2; a_{n-1}=b_{n-1}, -a_1=b_n$$
One can easily see that these equations correspond to a matrix system
of the form $C\aa=\bb$ where 
$$C=\left[\begin{array}{rrrrr}-1&0&0&\dots&0\\1&-1&0&\dots&0\\0&1&-1&0&\dots\\
\dots&\dots&\dots&\dots&\dots\\0&\dots&0&1&-1\\0&0&\dots&\dots&1\end{array}\right]$$
is an $n\times n$ matrix.
Since the rows of this matrix add to 0, and the last $n-1$ rows form a triangular
system with ones on the diagonal, it is clear that one could solve this
system uniquely for $\bb=[b_1,\dots,b_{n}]^T\in \Z^n$ where $\sum_{i=1}^{n}b_i=0$.

So we have proved that $J_{X_n}\cong I_{X_n}\otimes \Z^-_{C_n}$.

For the last statement, 
we will write 
$$D_{2n}\times C_2=\langle \sigma,\tau,\gamma: \sigma^n=\tau^2=\gamma^2=1,
\tau\sigma\tau^{-1}=\sigma^{-1},\gamma\tau=\tau\gamma, \gamma\sigma=
\sigma\gamma\rangle$$
Note that $X_n$ as an $(D_{2n}\times C_2)$-set is isomorphic to 
$(D_{2n}\times C_2)/(C_2\times C_2)$.
Inflation preserves exactness, so in particular isomorphisms and commutes 
with tensor products.
So inflating the $D_{2n}$-lattice isomorphism
$$J_{X_n}\cong I_{X_n}\otimes \Z^-_{C_n}$$
we obtain a $D_{2n}\times C_2$-lattice isomorphism
\begin{equation}
J_{X_n}\cong I_{X_n}\otimes \Z^-_{C_n\times C_2}\label{eq:jinfl}
\end{equation}
Note that $$\Z^{-}_{C_n\times C_2}\otimes \Z^{-}_{D_{2n}}\cong \Z^{-}_{\langle \sigma,\tau\gamma\rangle}$$
Note also that 
$\langle \sigma,\tau\gamma\rangle\cong D_{2n}$
and $\langle \sigma,\tau\gamma,\gamma\rangle=\langle \sigma,\tau,\gamma\rangle=D_{2n}\times C_2$, $\langle \tau\gamma,\gamma\rangle=\langle \tau,\gamma\rangle=C_2\times C_2$.
So we see that after tensoring (\ref{eq:jinfl}) by $\Z^-_{D_{2n}}$ we obtain
$$J_{X_n}\otimes \Z^-_{D_{2n}}\cong 
I_{X_n}\otimes \Z^{-}_{\langle \sigma,\tau\gamma\rangle}\cong I_{X_n}\otimes \Z^-_{D_{2n}}$$
Note that although $H=\langle \sigma,\tau\gamma\rangle$ is a different non-conjugate subgroup isomorphic to $D_{2n}$, $I_{X_n}=I_{\langle \sigma,\tau\gamma,\gamma\rangle}/{\langle \tau\gamma,\gamma\rangle}$. 
\end{proof}

\begin{remark} This technical result has some very interesting consequences.
Note that $\Lambda (A_2)\cong J_{X_3}$ as an $S_3$-lattice.  The symmetric group $S_3$ is also 
the dihedral group $D_6$.  We then see that $\Lambda (A_2)\cong J_{X_2}\cong 
I_{X_2}\otimes \Z^-_{C_3}=I_{X_2}\otimes I_{Y_2}$
where $Y_2$ is the $S_3$ set of size 2 permuted by $S_3/C_3$ and fixed by $C_3$.
Using Proposition~\ref{prop:d2n}, we easily recover the fact that an algebraic torus
with character lattice $(\Lambda (A_2),S_3)$ is hereditarily rational.
Note also that we can recover the fact that an algebraic torus with character lattice $(\Z G_2,W(G_2))$
or equivalently $(\Z A_2,\Aut(A_2))$ is hereditarily rational.
More generally, it shows that an algebraic torus with character lattice
$J_{X_n}\otimes \Z^-_{D_{2n}}$, $n$ odd,  is hereditarily rational.
\end{remark}

Given a finite subgroup $G$ of $\GL_r(\Z)$ up to conjugacy, the lattice determined by $G$, $M_G$ is determined by the action of $G$ by multiplying elements 
of $\Z^r$ (considered as rows) by elements of $G$ on the right.
So for the standard basis $\e_1,\dots, \e_r$ of $\Z^r$,
$\e_i\cdot g=\sum_{j=1}^ra_{ij}\e_j$
where $g=[a_{ij}]_{i,j=1}^r\in \GL_r(\Z)$.

There is a library of conjugacy class representatives of finite
subgroups of $\GL_r(\Z)$ for $r=2,3,4$ in GAP.  
The maximal finite subgroups of $\GL_r(\Z)$ for $r=2,3,4$ are encoded in 
GAP as DadeGroup(r,k), in honour of Dade who determined the maximal
finite subgroups of $\GL_4(\Z)$ without the use of a computer.
I will use the GAP labelling to refer to conjugacy class representatives.
I will identify $M_G$ for each maximal finite subgroup of $\GL_r(\Z)$, $r=2,3,4$.  These results are probably folklore (at least for $r=2,3$) but are not phrased in these terms in the literature.

\begin{remark}
Note, in GAP, the command 
\begin{verbatim}DadeGroup(r,k);
\end{verbatim}
 where $r=2,3,4$ and $k$ is in the 
correct range for the rank gives 
\begin{verbatim}MatGroupZClass(r,i,j,k);
\end{verbatim}
The command 
\begin{verbatim}
GeneratorsOfGroup(M);
\end{verbatim}
 gives the group generators of the 
group M.  
For better legibility, one could use the command
\begin{verbatim}
for g in GeneratorsOfGroup(M) do PrintArray(g); od;
\end{verbatim}
The use of GAP to list generators is a convenience.  This
information could also be found in~\cite{BBNWZ78}.
\end{remark}    

\subsection{Algebraic $k$-tori of dimension 2}.
For rank 2, there are 13 conjugacy classes of finite subgroups.
There are 2 maximal such classes.

\begin{prop} (Voskresenskii)
The maximal finite subgroups of $\GL_2(\Z)$ up to conjugacy
are $G_i$=\textup{DadeGroup(2,i)}, $i=1,2$.
Then 
\begin{enumerate}
\item $M_{G_1}=(\Z B_2,W(B_2))=(\Z B_2,\Aut(B_2)$ and
\item $M_{G_2}=(\Z A_2,\Aut(A_2)=(\Z G_2,W(G_2))$.
\end{enumerate}
The corresponding algebraic $k$-tori are hereditarily rational.
\end{prop}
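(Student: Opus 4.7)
The plan is to split the statement into two independent tasks: first, identify $M_{G_1}$ and $M_{G_2}$ with the claimed integral representations; second, deduce hereditary rationality by quoting the preceding proposition on families of hereditarily rational $k$-tori.

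For the identification, I would use the fact that, up to conjugacy, the finite subgroups of $\GL_2(\Z)$ realise the finitely many $2$-dimensional crystallographic point groups, whose two maximal classes are $W(B_2) \cong D_8$ of order $8$ (attached to the square Bravais class) and $W(G_2) \cong \Aut(A_2) \cong D_{12}$ of order $12$ (attached to the hexagonal Bravais class). Extracting explicit matrix generators of $G_1 = $ DadeGroup$(2,1)$ and $G_2 = $ DadeGroup$(2,2)$ from GAP and computing their orders pairs them up unambiguously with these two abstract groups.

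To complete the identification for $M_{G_1}$, I expect the generators of $G_1$ in the GAP basis $\e_1, \e_2$ to be signed permutation matrices, so that the standard $\Z$-basis is permuted up to sign and $M_{G_1}$ coincides with the representation $\eta_2$ of $W(B_2)$ on $\Z B_2$; the equality $W(B_2) = \Aut(B_2)$ holds because the Dynkin diagram of $B_2$ has no nontrivial automorphism. For $M_{G_2}$, I would exhibit an explicit change of $\Z$-basis from $\e_1, \e_2$ to a pair of simple roots $\alpha_1, \alpha_2$ of a root system of type $A_2$ inside $\Q^2$, and verify that conjugation by this basis change identifies $G_2$ with the action of $\Aut(A_2) = S_3 \times C_2$ on $\Z A_2$. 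The coincidences $\Z A_2 = \Z G_2$ and $\Aut(A_2) = W(G_2)$ are recorded in the preliminaries.

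With the identifications in place, hereditary rationality is immediate from the preceding proposition: part $(3)$ (sign-permutation lattices) applies to $M_{G_1}$, while part $(6)$ applies directly to $M_{G_2}$; alternatively, one can note that $\Z G_2 \cong I_{X_3} \otimes I_{Y_2}$ with coprime cardinalities $3$ and $2$ and invoke part $(5)$ instead. The main obstacle is the explicit basis change realising $M_{G_2}$ as $\Z A_2$: this is a small computation but one must verify it is an honest $\Z$-isomorphism rather than merely a $\Q$-linear equivalence, and confirm that the integer generators produced by GAP act exactly as the Weyl/automorphism group of the root system with respect to the chosen basis.
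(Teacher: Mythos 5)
Your proposal is correct and follows essentially the same route as the paper: identify the GAP generators of $G_1$ with the sign-permutation representation $\eta_2$ of $W(B_2)$ on $\Z B_2$ and those of $G_2$ with the representation $(\rho_2^-)^*$ of $\Aut(A_2)=S_3\times C_2$ on $\Z A_2=I_{X_3}\otimes \Z^-_{S_3}$, check the preimages generate the full groups, and then quote the hereditary rationality results for sign-permutation lattices and for $(\Z A_2,\Aut(A_2))=(\Z G_2,W(G_2))$ (equivalently the coprime tensor product $I_{X_3}\otimes I_{Y_2}$). The only cosmetic difference is that the paper recognises the generators directly as images under its fixed representations rather than via an explicit change of $\Z$-basis, so no conjugation step is needed.
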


\begin{proof}
The generators of $G_1=$ DadeGroup(2,1) with GAP ID [2,3,2,1] given by 
$$\left[\begin{array}{rr}1&0\\0&-1\end{array}\right],\left[\begin{array}{rr}0&-1\\1&0\end{array}\right],-I_2$$
are the images of 
$$\tau_2,\tau_2(12),\tau_1\tau_2$$
under  the faithful representation $\eta_2:W(B_2)\to \GL(2,\Z)$ determined by 
$\Z B_2$ with respect to the standard basis determined earlier.
Since $\langle \tau_2,\tau_2(12),\tau_1\tau_2\rangle =\langle \tau_1,\tau_2,(12)\rangle=W(B_2)$, we may claim that $M_{G_1}=(\Z B_2,W(B_2))$ as required.
Since this is a  sign permutation lattice, the corresponding torus is 
hereditarily rational.

The generators of  $G_2=$ DadeGroup(2,2) with  GAP code [2,4,4,1] given by
$$\left[\begin{array}{rr}0&1\\1&0\end{array}\right],-I_2,
\left[\begin{array}{rr}0&-1\\1&-1\end{array}\right]$$
are the images of
$$((12),1),(1,\gamma),((132),1)$$
under the faithful representation $(\rho_2^-)^*:S_3\times C_2\to \GL_2(\Z)$ 
determined by the $S_3\times C_2$-lattice 
$I_{X_3}\otimes \Z^-_{S_3}$ described earlier.
We have already noticed that this coincides with $(\Z A_2,\Aut(A_2))$.
We need only check that $\langle ((12),1),((132),1),(1,\gamma)\rangle=S_3\times C_2$, which is clear.

So for rank 2, the lattices corresponding to maximal finite subgroups of $\GL_2(\Z)$  are $(\Z G_2,W(G_2))=(\Z A_2,\Aut(A_2))$ and $(\Z B_2,W(B_2))$.  As explained above,
these are hereditarily  rational and so the corresponding tori and those
corresponding to their subgroups are rational.  This is effectively
a rephrasing of how Voskresenskii proves that all algebraic k-tori of dimension 2 are rational.
\end{proof}

\subsection{Algebraic $k$-tori of dimension 3}.

For rank 3, the 73 conjugacy classes of finite subgroups of $\GL_3(\Z)$
were determined by
Tahara~\cite{Tah71}  There are 4 maximal such classes.
Kunyavskii~\cite{Kun90} classified the algebraic $k$-tori of dimension 3 up to birational
equivalence. For each of the tori corresponding to maximal subgroups, he 
constructed a nonsingular projective toric model of the algebraic $k$ 
tori and used the geometric construction of the flasque resolution of the 
character lattice of each to understand birational properties of the 
algebraic $k$-tori corresponding to maximal subgroups.
See Kunyavskii~\cite{Kun07} and the description of his work in Voskresenskii~\cite{Vos98}.

\begin{prop} 
Let $G_k$=\textup{DadeGroup(3,k)} for $k=1,\dots,4$.
Then the corresponding lattices are:
\begin{enumerate}
\item $(M_{G_1},G_1)=(\Z A_2\oplus \Z A_1,\Aut(A_2)\times \Aut(A_1))$.
\item $(M_{G_2},G_2)=(\Z B_3,W(B_3))=(\Z B_3,\Aut(B_3))$.
\item $(M_{G_3},G_3)=(\Lambda(A_3),\Aut(A_3))$.
\item $(M_{G_4},G_4)=(\Z A_3,\Aut(A_3))$.
\end{enumerate}
\end{prop}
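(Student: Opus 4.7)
The plan is to follow the strategy used in the rank 2 proof: for each $k$, list the GAP generators of $G_k = \textup{DadeGroup}(3,k)$, identify each generator matrix as the image of an explicit element under a known faithful integral representation, and verify that those elements generate the claimed abstract group.

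For (1), I would check that each GAP generator of $G_1$ is block-diagonal of type $(2+1)$, identify the $2 \times 2$ block with an element of $\Aut(A_2) = S_3 \times C_2$ via the representation $(\rho_2^-)^*$ already used in the dimension 2 case, and identify the $1 \times 1$ block with $\pm 1 \in \Aut(A_1)$. Confirming that these elements generate $\Aut(A_2) \times \Aut(A_1)$ is then immediate from the generation statement for $\Aut(A_2)$ proved in the rank 2 case together with a single sign-flip generator for $\Aut(A_1)$.

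For (2), I would verify that each GAP generator of $G_2$ is a signed permutation matrix, placing it in the image of $\eta_3 : W(B_3) \to \GL_3(\Z)$ on the standard basis of $\Z B_3$. Identifying each generator with a specific element of $W(B_3) = (C_2)^3 \rtimes S_3$, I would check they generate the full group. Since the Dynkin diagram of $B_3$ has no nontrivial automorphisms, $\Aut(B_3) = W(B_3)$, giving both asserted equalities.

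For (3) and (4), both $G_3$ and $G_4$ are copies of $\Aut(A_3) = S_4 \times C_2$, so the task is to distinguish which acts on the root lattice and which on the weight lattice. The image of $\rho_3^-$ consists of signed permutation matrices together with matrices having one \emph{row} replaced by $[\pm 1, \ldots, \pm 1]$, while the image of $(\rho_3^-)^*$ has the analogous structure with \emph{columns}. I would check that the GAP generators of $G_3$ match the $\rho_3^-$ pattern, identify them with elements of $S_4 \times C_2$, and verify generation, concluding $(M_{G_3}, G_3) = (\Lambda(A_3), \Aut(A_3))$ via the identification $\Lambda(A_3) \cong J_{X_4} \otimes \Z^-_{S_4}$ from the notation section; the same approach applied to $G_4$ using $(\rho_3^-)^*$ yields $(M_{G_4}, G_4) = (\Z A_3, \Aut(A_3))$ from $\Z A_3 \cong I_{X_4} \otimes \Z^-_{S_4}$. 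The main obstacle is the bookkeeping in these last two cases: one must carefully track the row-action convention used by GAP and the signs in $\rho_3^-$ versus its dual in order to match each GAP generator to the correct element of $S_4 \times C_2$ and avoid conflating the dual pair of representations. The verification is mechanical once conventions are pinned down, but the duality between root and weight lattices of $A_3$ makes misidentification the most likely pitfall.
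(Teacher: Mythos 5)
Your proposal is correct and matches the paper's proof essentially step for step: blocks identified via $(\rho_2^-)^*$ for $G_1$, signed permutation matrices via $\eta_3$ for $G_2$, and matching generators against $\rho_3^-$ for $G_3$ with the explicit generation checks in $S_4\times C_2$. The only inessential difference is in case (4), where the paper observes that the GAP generators of $G_4$ are the transposes of those of $G_3$ and immediately concludes the lattice is the $\Z$-dual $I_{X_4}\otimes \Z^-_{S_4}$, rather than redoing the identification directly with $(\rho_3^-)^*$ as you propose.
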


\begin{proof}    
We claim DadeGroup(3,1) with GAP ID [3,6,7,1] corresponds to 
$(\Z A_2\oplus \Z A_1,\Aut(A_2\times A_1))$.
$\Aut(A_2)$ acts as above on $\Z A_2$ and trivially on $\Z A_1$
and $\Aut(A_1)$ acts as $-1$ on $\Z A_1$ and trivially on $\Z A_2$.
 
The generators given by GAP are
$$-I_3,\diag(A,-1),\diag(-I_2,1),\diag(B,1)$$
where 
$$ A=(\rho_2^-)^*((12),1), B=(\rho_2^-)^*((132),1)$$
One may replace these generators by 
$$\diag(A,1),\diag(B,1),\diag(-I_2,1),\diag(I_2,-1)$$
Then by the above argument, we have already seen that $\langle A,B,-I_2\rangle$
determines $(\Z A_2,\Aut(A_2))$ and so we clearly 
have that the full group determines $(\Z A_2\oplus \Z A_1,\Aut(A_2)\times \Aut(A_1))$.


We claim that DadeGroup(3,2) with GAP ID [3,7,5,1] corresponds to $(\Z B_3,W(B_3))$.
It is clear from the matrix generators that this corresponds to a sign
permutation lattice.
Recall the faithful representation $\eta_3:W(B_3)\to \GL_3(\Z)$ determined by $\Z B_3$ 
with respect to the standard basis. 
The generators given by GAP are the images under $\eta_3$ of the following elements of $W(B_3)$:
$$\tau_1\tau_2\tau_3,\tau_3(12),(132),\tau_1\tau_3,\tau_1\tau_2$$
so it is not hard to see that the generators
can be replaced by
$$\langle \tau_1,\tau_2,\tau_3,(12),(132)\rangle\cong C_2^3\rtimes S_3$$

DadeGroup(3,3) with GAP ID [3,7,5,2] corresponds to $(\Lambda(A_3),\Aut(A_3))$
where $G=\Aut(A_3)=S_4\times C_2$.
The generating set given by GAP are the images under $\rho^-_3:S_4\times C_2\to \GL(3,\Z)$ of the elements:
$$(\id,\gamma),((3,4),\gamma),((1,3,2),1),
((13)(24),1),((12)(34),1)$$

where  $\rho_3^-:S_4\times C_2\to \GL_3(\Z)$
is the representation determined by $J_{X_4}\otimes \Z^-_{S_4}$
with respect to a  natural basis described earlier.
It suffices to check that $\langle (3,4),(1,3,2),(1,3)(2,4),(1,2)(3,4)\rangle=S_4$ which is straightforward.
Note that $\Aut(A_3)$ acts on $\Lambda(A_3)$ as
$J_{X_4}\otimes \Z^-_{S_4}$, where $X_4$ is the natural $S_4\times C_2$-set.


DadeGroup(3,4) with GAP code [3,7,5,3] corresponds to $(\Z A_3,\Aut(A_3))$.
The matrix generators of this group given by GAP are the transposes of 
those for DadeGroup(3,4).  So the corresponding lattice is accordingly the 
$\Z$ dual $S_4\times C_2$-lattice $I_{X_4}\otimes \Z^-_{S_4}$ which indeed is the 
representation of $\Aut(A_3)$ on $\Z A_3$.

Note that the last 3 Dade groups for dimension 3 are all $\Z$-forms
of the root lattice of $A_3$.  [Since $\SL_4/C_2\cong \SO_3$, the character lattice
of $\SL_4/C_2$ would be a $\Z$-form of the character lattice of $\SL_4/C_4$
which is $\Z A_3$.] 
\end{proof}

Although this is not how Kunyavskii determined the rational algebraic $k$ 
tori of dimension 3, the following  argument is more or less equivalent.
He did not need to determine which ones were maximal rational as the numbers
were relatively small.

\begin{prop} (Rational algebraic tori of dimension 3)
The maximal finite subgroups of $\GL(3,\Z)$ corresponding to hereditarily rational tori of dimension 3 have the following lattices:
\begin{enumerate}
\item $(\Z A_2\oplus \Z A_1,\Aut(A_2)\times \Aut(A_1))$
\item $(\Z B_3,W(B_3))=(\Z B_3,\Aut(B_3))$.
\item $(\Z A_3,W(A_3))$
\item $(L,W(B_2))$ where $L$ fits into a short exact sequence of 
$W(B_2)$ lattices
$$0\to \Z B_2\to L\to \Z\to 0$$
\end{enumerate}
There are 58 conjugacy classes of finite subgroups of $\GL(3,\Z)$ which are conjugate to a subgroup of one of the above 4 groups.  These correspond to the list of 58 rational algebraic tori of dimension 3. 
\end{prop}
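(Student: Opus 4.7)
The plan is to proceed in three steps: (i) verify that each of the four listed lattices is hereditarily rational by reduction to the results of Proposition 3.3 and Proposition 3.10, (ii) verify that the four groups are maximal among subgroups of $\GL_3(\Z)$ corresponding to hereditarily rational tori, and (iii) enumerate conjugacy classes of subgroups in $\GL_3(\Z)$ contained in one of the four and match the count to $58$.

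For step (i) I would handle the four cases essentially independently. Case (1) is the direct sum of two hereditarily rational lattices: $(\Z A_2,\Aut(A_2))$ is the $G_2$ root lattice, hereditarily rational by Proposition 3.3(6) (or equivalently by the tensor-product description via Proposition 3.3(5)), and $(\Z A_1,\Aut(A_1))$ is a rank one sign-permutation lattice, hence hereditarily rational by Proposition 3.3(3); the remark after Proposition 3.3 then gives the claim. Case (2) follows directly from Proposition 3.3(3), since $(\Z B_3,W(B_3))$ is by construction a sign-permutation lattice in the standard basis $\e_1,\e_2,\e_3$. For case (3), the root lattice $(\Z A_3,W(A_3))$ is the augmentation ideal $I_{X_4}$ for the natural $S_4$-set, and is hereditarily rational by Proposition 3.3(2). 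Case (4) is the only one requiring the proposition on short exact sequences: since $(\Z B_2,W(B_2))$ is a faithful hereditarily rational $W(B_2)$-lattice, any rank $3$ extension $0\to\Z B_2\to L\to\Z\to 0$ with trivial $W(B_2)$-action on the quotient has $\Z$ as a permutation $W(B_2)$-lattice, so by the proposition immediately preceding Section~4, the corresponding torus is hereditarily rational.

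For step (ii) the argument is by exclusion. Kunyavskii showed that of the $73$ conjugacy classes, exactly $15$ are not even retract rational; the remaining $58$ are rational. The two Dade groups $G_3=(\Lambda(A_3),\Aut(A_3))$ and $G_4=(\Z A_3,\Aut(A_3))$ both contain subgroups whose associated tori lie in Kunyavskii's list of non-rational cases, so they are not themselves hereditarily rational, explaining why the rational subcases are detected by the smaller group $W(A_3)=S_4$ (in case (3)) and by the non-Dade group $W(B_2)$ via the extension in case (4). The Dade groups $G_1,G_2$ are already present. One then checks that no two of the four listed groups are conjugate in $\GL_3(\Z)$ to subgroups of a single larger hereditarily rational group, which follows from comparing ranks of invariants and fixed sublattices.

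For step (iii) I would use GAP to list, for each of the four groups, all conjugacy classes of subgroups, translate each to its $\GL_3(\Z)$-conjugacy class via the crystallographic library, and take the union. The predicted total is $58$, which should agree with Kunyavskii's rational list. The main obstacle is case (4): identifying the precise lattice $L$ inside $\GL_3(\Z)$ with its standard generators, since it is not one of the four Dade groups and requires choosing an explicit basis in which the extension $0\to\Z B_2\to L\to\Z\to 0$ is visible. Once $L$ is pinned down as a specific conjugacy class in $\GL_3(\Z)$, the subgroup enumeration and union of subgroup lists is a routine GAP computation, and the match with the $58$ rational cases on Kunyavskii's list completes the proof.
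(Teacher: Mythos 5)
Your proposal is correct and follows essentially the same route as the paper: establish hereditary rationality of the four lattices via the families of Proposition 3.3 (direct sum, sign permutation, augmentation ideal) together with the extension-by-a-permutation-quotient proposition for case (4), then use GAP to take the union of conjugacy classes of subgroups and match it against Kunyavskii's list of $58$ rational classes. The one obstacle you flag in step (iii) is resolved in the paper by identifying case (4) as the group with GAP code $[3,4,5,2]$ (abstractly $D_8\cong W(B_2)$) and passing to the basis $\{\e_1-\e_2,\e_3,\e_2\}$, in which the exact sequence $0\to \Z B_2\to L\to \Z\to 0$ becomes visible from the conjugated generators.
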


\begin{proof} 
We note that the lattice $(\Z A_2\oplus \Z A_1,\Aut(A_2)\times \Aut(A_1))$
is hereditarily rational as it is a direct sum of hereditarily rational
lattices.  This corresponds to DadeGroup(3,1) with GAP code [3,6,7,1].

The lattice $(\Z B_3,W(B_3))$ is hereditarily rational  as it is a sign permutation lattice.  It corresponds to DadeGroup(3,2) with GAP code [3,7,5,1].

The lattice $(\Z A_3,W(A_3))=(I_{X_4},S_4)$ is hereditarily rational.
 From our identification of DadeGroup(3,4) with lattice $(\Z A_3,\Aut(A_3))$ we see that this should correspond to  a maximal subgroup.
The subgroup with GAP code [3,7,4,3]
has generators 
$$\rho_3^*((3,4)), \rho_3^*((1,3,2)), \rho_3^*((13)(24)), \rho_3^*((12)(34))$$
where $\rho_3^*:S_4\to \GL_3(\Z)$ is the dual representation of $\rho_3$ 
and hence corresponds to the lattice $I_{X_4}$.

The group with  GAP code [3,4,5,2] is abstractly isomorphic to $D_8$.
The following are a set of generators given by GAP:
$$B_1=\left[\begin{array}{rrr}0&1&-1\\0&1&0\\-1&1&0\end{array}\right],
B_2=\left[\begin{array}{rrr}0&1&0\\0&1&-1\\-1&1&0\end{array}\right],B_3=\left[\begin{array}{rrr}0&1&-1\\1&0&-1\\0&0&-1\end{array}\right]$$
We note that the sublattice spanned by $\{\e_1-\e_2,\e_3\}$ is stable under the 
action of the group.  We then recompute the action of the 
generators on the $\Z$-basis $\{\e_1-\e_2,\e_3,\e_2\}$ or equivalently
conjugate the generators by the change of basis matrix.
Then the conjugate generators are 
$$C_1=\left[\begin{array}{rrr}0&-1&0\\-1&0&0\\0&0&1\end{array}\right],
C_2=\left[\begin{array}{rrr}0&1&0\\-1&0&0\\0&-1&1\end{array}\right],C_3=\left[\begin{array}{rrr}-1&0&0\\0&-1&0\\1&-1&1\end{array}\right]$$
With respect to this new basis, it is clear that
the corresponding lattice $L$ fits into the short exact sequence of $W(B_2)\cong D_8$
lattices
$$0\to \Z B_2\to L\to \Z\to 0$$
where we recall that $W(B_2)=C_2^2\rtimes C_2\cong D_8$.
Then we see that this lattice is also hereditarily  rational.

We can then check using GAP that the union of conjugacy classes of subgroups of
the groups with the above GAP IDs corresponds to  the complete list of 58 
rational algebraic $k$-tori given by Kunyavskii. We will give more
details of our minimal GAP calculations after the dimension 4 case.
\end{proof}

\subsection{Algebraic $k$-tori of dimension 4}
We now examine the dimension 4 case.
The classification of maximal finite subgroups of $\GL_4(\Z)$ up to conjugacy
is due to Dade.  There are 9 maximal finite subgroups.
There are 710 conjugacy classes of finite subgroups of $\GL_4(\Z)$.

\begin{prop} 
Let $G_k=$\textup{DadeGroup(4,k)} for $k=1,\dots,9$.
Then the corresponding lattices are:
\begin{enumerate}
\item $(M_{G_1},G_1)=(\Z B_2\oplus \Z A_2,\Aut(B_2)\times \Aut(A_2))$.
\item $(M_{G_2},G_2)=(\Lambda A_3\oplus \Z A_1,\Aut(A_3)\times \Aut(A_1))$.
\item $(M_{G_3},G_3)=(\Z A_3\oplus \Z A_1,\Aut(A_3)\times \Aut(A_1))$
\item $(M_{G_4},G_4)=(L,((W(A_2)\times W(A_2))\rtimes C_2)\times C_2)$
where $L$ is the non-trivial intermediate lattice between $\Z A_2\oplus \Z A_2$
and $\Lambda(A_2)\oplus \Lambda(A_2)$.
\item $(M_{G_5},G_5)=(\Z A_2\oplus \Z A_2, (\Aut(A_2)\times \Aut(A_2))\rtimes C_2)$.
\item $(M_{G_6},G_6)=(\Z A_4,\Aut(A_4))$.
\item $(M_{G_7},G_7)=(\Lambda(A_4),\Aut(A_4))$.
\item $(M_{G_8},G_8)=(\Z B_4,W(B_4))=(\Z B_4,\Aut(B_4))$.
\item $(M_{G_9},G_9)=(\Z F_4,W(F_4))=(\Z F_4,\Aut(F_4))$.
\end{enumerate}
\end{prop}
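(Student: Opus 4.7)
The plan is to follow the same recipe used in the rank 2 and rank 3 cases, namely: for each Dade group $G_k$, extract the generators produced by GAP, and identify these matrices as images under explicit faithful integral representations of abstract groups associated with well-known lattices (the representations $\eta_n$ for $W(B_n)$, and $\rho_n, \rho_n^{\ast}, \rho_n^{-}, (\rho_n^{-})^{\ast}$ for the $A_n$ family, together with their direct sums and a suitable description for $F_4$). After identifying the generators, one checks that the corresponding abstract elements generate the claimed abstract group, which is routine group-theoretic verification.

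First I would dispatch the block-decomposable cases $G_1, G_2, G_3, G_5$. In each, the GAP generators are already (or become, after an obvious permutation of basis vectors) block-diagonal with blocks in dimensions $(2,2)$, $(3,1)$, $(3,1)$, $(2,2)$ respectively, and the blocks can be recognised via the identifications carried out in dimensions 2 and 3. The one twist is $G_5$, whose extra $C_2$ factor must be identified with the involution swapping the two $\Z A_2$ summands; this is visible from a single off-diagonal block-permutation generator. The $A_4$ cases $G_6,G_7$ are handled by pulling back the five standard GAP generators through $(\rho_4^{-})^{\ast}$ and $\rho_4^{-}$ respectively, exactly as was done for $G_3$ and $G_4$ in dimension 3; the $B_4$ case $G_8$ is handled by pulling generators back through $\eta_4$ and noting that one recovers all of $C_2^4 \rtimes S_4$.

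The two remaining entries, $G_4$ and $G_9$, will require more attention. For $G_4$, I would first exhibit the two obvious sublattices $\Z A_2 \oplus \Z A_2 \subset L \subset \Lambda(A_2)\oplus \Lambda(A_2)$ and note that $(\Lambda A_2/\Z A_2)^2 \cong (\Z/3)^2$, so the intermediate lattices containing $\Z A_2\oplus \Z A_2$ with index $3$ correspond to nonzero lines in $(\Z/3)^2$. Only the diagonal line is stable under the swap $C_2$ in the $((W(A_2)\times W(A_2))\rtimes C_2)\times C_2$ action, giving a unique candidate $L$. To match this with DadeGroup(4,4) I would change basis so that the block-diagonal subgroup $\Aut(A_2)\times \Aut(A_2)$ becomes manifest and then read off the extra swap and the extra global $-1$ from the remaining generators. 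For $G_9$, I would use Humphreys' realisation of $F_4$ in $\Q^4$ with root lattice $\Z B_4 + \Z\cdot\tfrac{1}{2}(\e_1+\e_2+\e_3+\e_4)$ and Weyl group of order $1152$; the task is to pick a $\Z$-basis of $\Z F_4$ in which the GAP generators become sign-permutation-like matrices with the expected extra generator coming from the short-root reflection $\tfrac{1}{2}(\e_1+\e_2+\e_3+\e_4)$, and then verify that the resulting abstract elements generate $W(F_4)$ using the fact that $W(F_4)=\Aut(F_4)$.

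The main obstacle is the $F_4$ identification in $G_9$: among the Dade groups it is the one whose GAP generators are not in any obviously recognisable sign-permutation or augmentation-ideal form, so the bulk of the work is in producing the right change of basis and verifying that the abstract subgroup generated has the correct order $1152$. The intermediate-lattice case $G_4$ is the next most delicate, but once the index-$3$ sublattice lattice is pinned down and the swap identified, the verification reduces to straightforward linear algebra over $\Z$ and a finite group-generation check.
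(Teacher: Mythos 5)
For seven of the nine cases ($G_1,G_2,G_3,G_5,G_6,G_7,G_8$) your plan coincides with the paper's proof: regroup the GAP generators (using order tricks such as $A^3=-I_3$ to split off block factors), pull them back through $\eta_n$, $\rho_n^{-}$, $(\rho_n^{-})^{*}$, and check generation of the claimed abstract group. Those verifications go through as you describe.

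Your treatment of $G_4$, however, has genuine gaps. First, the uniqueness claim is false as stated: $(\Z/3)^2$ has four lines, and \emph{both} $\langle(1,1)\rangle$ and $\langle(1,-1)\rangle$ are stable under the swap (the swap sends $(1,-1)$ to $(-1,1)$, which spans the same line), so there are two candidate intermediate lattices, not one. This is repairable — the map $\id\oplus(-1)$ on $\Lambda(A_2)\oplus\Lambda(A_2)$ carries one candidate to the other and normalizes the group, since $-1$ is central in $\Aut(A_2)$ and conjugation sends $\tau$ to $\tau\cdot(-\id)$ — but the repair must be made explicitly, as the conclusion is only uniqueness up to isomorphism of lattices-with-group. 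Second, you cannot make ``the block-diagonal subgroup $\Aut(A_2)\times\Aut(A_2)$'' manifest, because it is not contained in $G_4$: the element $(-1,1)\in\Aut(A_2)\times\Aut(A_2)$ moves both stable lines (it sends $(1,1)$ to $(-1,1)$), so the full product stabilizes \emph{no} proper intermediate lattice. The block-diagonal part of $G_4$ is only $(W(A_2)\times W(A_2))\times\langle-\id\rangle$, which is precisely why $G_4$ is the index-2 subgroup $((W(A_2)\times W(A_2))\rtimes C_2)\times C_2$ of $\Aut(A_2\times A_2)\times C_2$ and not the full group. Third, the final matching with \textup{DadeGroup(4,4)} will not reduce to ``straightforward linear algebra'': since $L$ is indecomposable, no basis makes the generators block-diagonal, and this conjugacy verification is exactly the point where the paper itself resorts to GAP (comparing \textup{CrystCatZClass} of the two matrix groups) and explicitly flags the absence of a hand proof. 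Your structural argument, once repaired, could in principle replace that GAP check, but you would then still need to locate, from the GAP generators alone, a $(W\times W)$-invariant copy of $\Z A_2\oplus\Z A_2$ of index 3 inside $M_{G_4}$ — for instance via the sublattice generated by $(s-1)M_{G_4}$ as $s$ runs over the reflections — and this step is absent from your proposal.

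On $G_9$ your route is essentially the paper's in reverse: the paper computes the four simple reflections of $F_4$ in the basis $\{\e_1,\e_2,\e_3,\frac{1}{2}(\e_1+\e_2+\e_3+\e_4)\}$ of $\Z F_4$, exhibits each as an explicit word in the GAP generators $X_1,\dots,X_7$, and concludes from $|\textup{DadeGroup}(4,9)|=1152=|W(F_4)|$; containment in either direction plus the order count suffices. But drop the expectation of ``sign-permutation-like matrices'': no basis can realize $W(F_4)$ by sign-permutation matrices, since the sign-permutation matrices of rank 4 form a group of order $2^4\cdot 4!=384<1152$. The workable target is exactly the expression of the simple reflections as words in the $X_i$, which the paper (with GAP assistance) carries out.
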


\begin{proof}
$G_1=$ DadeGroup(4,1) with GAP ID [4,20,22,1]
corresponds to $(\Z B_2\oplus \Lambda(A_2),W(B_2)\times \Aut(A_2))$
which is hereditarily rational as the direct sum of hereditarily rational 
lattices.
The generators of this group are given as 
$$\{\diag(I_2,B_1),\diag(A_1,B_2),\diag(A_2,I_2),-I_4\}$$
where 
$$A_1=\eta_2(\tau_2),A_2=\eta_2(\tau_1 (12)),$$
and 
$$B_1=\rho^-_3((23),1), B_2=\rho^-_3((13),1)$$

Since $B_1B_2=\rho^-_3((132),1)$ has order 3, and $A_1$ has order 2,
$\diag(A_1,I_2)=\diag(A_1,B_1B_2)^3\in G_1$.  Since also $A_2^2=-I_2$, we may 
replace the above set of generators by 
$$\{\diag(A_1,I_2),\diag(A_2,I_2),\diag(-I_2,I_2),\diag(I_2,B_1),\diag(I_2,B_2),
\diag(I_2,-I_2)\}$$
Then the associated lattice is $M=M_1\oplus M_2$ where
where $M_1=\Z\e_1\oplus \Z\e_2$ and $M_2=\Z\e_3\oplus \Z\e_4$
are both $G$ invariant.
Then since 
$$A_1=\eta_2(\tau_2),A_2=\eta_2(\tau_2 (12)),-I_2=\eta_2(\tau_1\tau_2\tau_3)$$
and 
$$B_1=\rho^-_3((23),1), B_2=\rho^-_3((13),1),-I_2=\rho^-_3(1,\gamma)$$
we may see that the lattice $M_1$ is $(\Z B_2,W(B_2))$ and the 
lattice $M_2$ is $(J_{X_3}\otimes \Z^-_{S_3},S_3\times C_2)$, where 
$X_3$ is the natural $S_3\times C_2$-set.
$M_2$ corresponds to the natural action of $\Aut(A_2)$ on $\Lambda(A_2)$.
So $(M_{G},G)=(\Z B_2\oplus \Lambda(A_2),\Aut(B_2)\times \Aut(A_2))$.
The lattice is hereditarily  rational as the direct sum of 2 hereditarily  
rational lattices. 

DadeGroup(4,2) has GAP ID [4,25,11,2]. 
The generators are 
$$\{\diag(1,A_1),\diag(1,A_2),\diag(1,A_3),\diag(-1,A_4),-I_4\}$$
where
$$A_1=\left[\begin{array}{rrr}0&1&0\\1&0&0\\0&0&1\end{array}\right],
A_2=\left[\begin{array}{rrr}1&0&-1\\1&-1&0\\1&0&0\end{array}\right],
A_3=\left[\begin{array}{rrr}-1&0&0\\-1&0&1\\-1&1&0\end{array}\right],
A_4=\left[\begin{array}{rrr}0&-1&1\\-1&0&1\\0&0&1\end{array}\right]$$
We then clearly see that the lattice decomposes as a direct sum of  
$M_1=\Z\e_1$ and $M_2=\oplus_{i=2}^4\Z \e_i$.
To determine $M_2$, we
note that for the faithful representation of $S_4\times C_2$ given
by $(\rho_3^-)^*:S_4\times C_2\to \GL(3,\Z)$ corresponding to the 
$S_4\times C_2$ lattice $I_{X_4}\otimes \Z^-_{S_4}$, we observe that
$$A_1=(\rho_3^-)^*((12),1),-A_2=(\rho_3^-)^*((1,3,4),1),A_3=(\rho_3^-)^*((14)(23),1),-A_4=(\rho_3^-)^*((12)(34),1)$$
Since $A_2^3=-I_3$, we see that the elements
$$\diag(1,A_1),\diag(1,-A_2),\diag(1,A_3),\diag(1,-I_3)$$
are contained in this group. 
Note that $\langle A_1,-A_2,A_3,-I_3\rangle=(\rho_3^-)^*(S_4\times C_2)$
since 
$$\langle ((12),1),(134),1),((14)(23),1),(1,\gamma)\rangle=S_4\times C_2.$$
So $H=(1,(\rho_3^-)^*(S_4\times C_2))$ is a subgroup of $G_2$.
Since $(1,A_4)\in H\le G_2$, then we also have $(-1,I_3)\in G_2$ and 
so $G_2=\langle (-1,I_3)\rangle \times (\rho_3^-)^*(S_4\times C_2)$. 
This shows that $M_2$ corresponds to the $S_4\times C_2$-lattice
$I_{X_4}\otimes \Z^-_{S_4}$.
This is the natural action of $\Aut(A_3)$ on $\Z A_3$.
So the lattice corresponding to DadeGroup(4,2) is $(\Z A_1\oplus \Z A_3,\Aut(A_1)\times \Aut(A_3))$.

$G_3=$DadeGroup(4,3) has GAP ID [4,25,11,4]. 

The generators are 
$$\{\diag(1,B_1),\diag(1,B_2),\diag(1,B_3),\diag(-1,B_4),-I_4\}$$
where
$$B_1=\left[\begin{array}{rrr}0&1&0\\1&0&0\\0&0&1\end{array}\right],
B_2=\left[\begin{array}{rrr}0&0&-1\\0&-1&0\\1&1&1\end{array}\right],
B_3=\left[\begin{array}{rrr}-1&-1&-1\\0&0&1\\0&1&0\end{array}\right],
B_4=\left[\begin{array}{rrr}0&-1&0\\-1&0&0\\1&1&1\end{array}\right]$$
We then clearly see that the lattice decomposes as a direct sum of  
$M_1=\Z\e_1$ and $M_2=\oplus_{i=2}^4\Z \e_i$.
To determine $M_2$, we
note that 
$$\rho_3^-((12),1)=B_1,\rho_3^-((134),1)=-B_2,\rho_3^-((14)(23),1)=B_3,
\rho_3^-((12)(34),1)=-B_4$$
where $\rho_3^-:S_4\times C_2\to \GL_3(\Z)$ is the representation
corresponding to $J_{X_4}\otimes \Z^-_{S_4}$
which in turn corresponds to $\Aut(A_3)$ acting on $\Lambda(A_3)$.
A similar argument to the one for $G_2$ shows that
$G_3=\langle (-1,I_3)\rangle \times \rho_3^-(S_4\times C_2)$
and the group determines the lattice
 $(\Z A_1\oplus \Lambda(A_3),\Aut(A_1)\times \Aut(A_3))$.

$G_4$=DadeGroup(4,4) with GAP ID [4,29,9,1]
is abstractly isomorphic to $((S_3\times S_3)\rtimes C_2)\times C_2$
and has generators given by 
$$\left[\begin{array}{rrrr}1&-1&0&0\\0&-1&0&0\\0&0&1&-1\\0&0&0&-1\end{array}\right], \left[\begin{array}{rrrr}1&-1&0&0\\0&0&1&-1\\0&-1&0&0\\0&0&0&-1\end{array}\right],\left[\begin{array}{rrrr}1&-1&0&0\\1&0&0&0\\0&0&1&-1\\0&0&1&0\end{array}\right],\left[\begin{array}{rrrr}0&0&0&1\\0&0&-1&1\\0&-1&0&1\\1&-1&-1&1\end{array}\right]$$
We claim that the lattice determined by this group is 
the proper intermediate lattice $L$ between $\Z A_2\oplus \Z A_2$
and $\Lambda(A_2)\oplus\Lambda(A_2)$ and the group action is that 
induced by the index 2 subgroup of the automorphism group of the 
root system $A_2\times A_2$ given by 
$(W(A_2)\times W(A_2))\rtimes C_2)\times C_2$. 
Let $\omega_1,\omega_2$ be the fundamental dominant weights of $A_2$
with respect to a basis $\alpha_1,\alpha_2$ of the $A_2$ root system.
We will write the basis of $\Lambda(A_2)\oplus \Lambda(A_2)$ 
as $\{\omega_1,\omega_2,\omega_1',\omega_2'\}$
and that of $\Z A_2\oplus \Z A_2$ as
$\{\alpha_1,\alpha_2,\alpha_1',\alpha_2'\}$.
Then the claimed sublattice $L$ of $\Lambda(A_2)\oplus \Lambda(A_2)$ 
satisfies $L=\langle \omega_1+\omega_1',\Z A_2\oplus \Z A_2\rangle$
Let $s_1=s_{\alpha_1},s_1'=s_{\alpha_1'},s_2=s_{\alpha_2},s_2'=s_{\alpha_2'}$
be the generators of $W(A_2)\times W(A_2)$.  Let $\tau$ be the element of 
order 2 which swaps the 2 copies of $\Lambda(A_2)$.
Then our group is $(\langle s_1,s_2,s_1',s_2'\rangle \rtimes \tau)\times \langle -\id \rangle$.
We claim that 
$$\beta=\{\omega_1+\omega_1',s_2s_1(\omega_1+\omega_1'),\alpha_1',s_2's_1'(\alpha_1')\}$$
is a $\Z$-basis of our lattice $L$.
Note 
$$s_i(\omega_j)=\omega_j-\delta_{ij}\alpha_j,i,j=1,2$$
We also recall that $\alpha_1=2\omega_1-\omega_2$ and $\alpha_2=-\omega_1+2\omega_2$.  The same results hold for the prime copies.
$\langle s_1,s_2\rangle$ acts trivially on $\oplus_{i=1}^2\omega_i'$ 
and similarly for the prime copies. 
So $s_2s_1(\omega_1+\omega_1')=\omega_1-\alpha_1-\alpha_2+\omega_1'=-\omega_2+\omega_1'$
and $s_2's_1'(\alpha_1')=-\alpha_1'-\alpha_2'$.
Note that $s_2s_1(\omega_1)=\omega_1+s_2s_1(\alpha_1)$.
Then 
$s_1(\omega_1+\omega_1')=\omega_1-\alpha_1+\omega_1'=-\omega_1+\omega_2+\omega_1'=-(\omega_1+\omega_1')-(-\omega_2+\omega_1')+3\omega_1'=
-(\omega_1+\omega_1')-(-\omega_2+\omega_1')+\alpha_1'-(-\alpha_1'-\alpha_2')
=-(\omega_1+\omega_1')-s_2s_1(\omega_1+\omega_1')+\alpha_1'-(s_2's_1'(\alpha_1')$.
From these calculations, one can show that $\beta$ is a basis of $L$.
They also allow us to find the matrices of the generators
$s_1,s_2s_1,s_1',s_2's_1',\tau,-\id$ on the basis $\beta$.
[We omit the details but note that the  worst calculation is  $s_1(\omega_1+\omega_1')$ made above.]
We obtain:



$$s_1=\left[\begin{array}{rrrr}-1&-1&1&0\\0&1&0&0\\0&0&1&0\\0&0&1&0\end{array}\right], s_1'=\left[\begin{array}{rrrr}1&0&-1&0\\0&1&-1&0\\0&0&-1&0\\0&0&1&1\end{array}\right],s_2s_1=\left[\begin{array}{rrrr}0&1&0&0\\-1&-1&1&-1\\0&0&1&0\\0&0&0&1\end{array}\right]$$
$$s_2's_1'=\left[\begin{array}{rrrr}1&0&0&1\\0&1&0&1\\0&0&0&1\\0&0&-1&-1\end{array}\right], \tau=\left[\begin{array}{rrrr}1&0&0&0\\1&0&0&1\\2&1&-1&1\\-1&1&0&0\end{array}\right], -I_4$$ 
The group generated by these generators is conjugate to the
group DadeGroup(4,4). 
This is determined by GAP by checking that the 
CrystCatZClass of the two matrix groups given by generators are in fact the 
same.  See the remark below on GAP calculations. 
[In fact, the index 2 subgroup generated by the generators
$\langle s_1,s_2s_1,s_1',s_2's_1',-\id\rangle$
is precisely equal to the group with GAP ID [4,22,11,1].
which is conjugate to a subgroup of the group DadeGroup(4,4).]
So the lattice 
determined by DadeGroup(4,4) is indeed the intermediate lattice
$L$ between $\Z A_2\oplus \Z A_2$ and $\Lambda(A_2)\oplus \Lambda(A_2)$
as a $(W(A_2)\times W(A_2))\rtimes C_2 \times C_2$ lattice.

For G=DadeGroup(4,5) with GAP ID [4,30,13,1], we claim that the 
corresponding lattice is $(\Z A_2\oplus \Z A_2,\Aut(A_2\times A_2))$
which is hereditarily rational as a wreath product of 2 hereditarily 
rational lattices.
$\Aut(A_2\times A_2)=(\Aut(A_2)\times \Aut(A_2))\rtimes S_2$ acts naturally 
on the root lattice for $A_2\times A_2$ where $\Aut(A_2)\times \Aut(A_2)$
acts diagonally on $\Z A_2\oplus A_2$ and the subgroup $S_2$ permutes the 
two copies of $A_2$.
The generators given by GAP are 
$$\left[\begin{array}{rr}I_2&0\\0&P\end{array}\right],
\left[\begin{array}{rr}0&I_2\\P&0\end{array}\right],
\left[\begin{array}{rr}I_2&0\\0&Y\end{array}\right],
\left[\begin{array}{rr}Z&0\\0&I_2\end{array}\right]$$
where 
$$P=\left[\begin{array}{rr}0&1\\1&0\end{array}\right],
Y=\left[\begin{array}{rr}0&1\\-1&1\end{array}\right],
Z=\left[\begin{array}{rr}1&-1\\1&0\end{array}\right]$$
Since  $Y,Z$ have order 6 and $Y^3=Z^3=-I_2$,
we may replace these generators by 
$$\left[\begin{array}{rr}I_2&0\\0&P\end{array}\right],
\left[\begin{array}{rr}0&I_2\\I_2&0\end{array}\right],
\left[\begin{array}{rr}I_2&0\\0&Y^2\end{array}\right],
\left[\begin{array}{rr}Z^2&0\\0&I_2\end{array}\right],
X_5=\diag(I_2,-I_2), X_6=\diag(-I_2,I_2).$$

Since
$$Y^2=\left[\begin{array}{rr}-1&1\\-1&0\end{array}\right]=(\rho_2^-)^*((123))\qquad Z^2=\left[\begin{array}{rr}0&-1\\1&-1\end{array}\right]=(\rho_2^-)^*((132)), P=(\rho_2)^*((12))$$
We see that the  lattice defined on $\Z\e_1\oplus \Z\e_2$ defined by $\langle P,Y^2\rangle$ is the $S_3$-lattice $I_{X_3}$.
We also see that the  lattice defined on $\Z\e_3\oplus \Z\e_4$
defined by $\langle P, Z^2\rangle$ is $I_{X_3}$.
This shows that the lattices defined by both
$\langle P,Y^2,-I_2\rangle$ and $\langle P,Z^2,-I_2\rangle$ are 
isomorphic to  the lattice $(I_{X_3}\otimes \Z^-_{S_3},S_3\times C_2)$
or equivalently $(\Z A_2,\Aut(A_2))$.
So the lattice restricted to 
$$\langle \diag(I_2,P),\diag(I_2,Y^2),\diag(I_2,-I_2),\diag(P,I_2),\diag(Z^2,I_2),\diag(-I_2,I_2)\rangle$$
is $(\Z A_2\oplus \Z A_2,\Aut(A_2)\times \Aut(A_2))$.
  Since 
$$\left[\begin{array}{rr}0&I_2\\I_2&0\end{array}\right]$$
swaps the 2 copies of $\Z A_2$, we see that the full lattice structure is 
given by 
$$(\Z A_2\oplus \Z A_2,(\Aut(A_2)\times \Aut(A_2))\rtimes C_2)$$
as required.

DadeGroup(4,6) has GAP ID [4,31,7,1].
We will show that it determines the lattice $(\Z A_4, \Aut(A_4))$
which is hereditarily rational since it is the tensor product of
2 augmentation ideals of relatively prime ranks.
This lattice is $(I_{X_5}\otimes \Z^-_{S_5},S_5\times C_2)$. 
Recalling our representation $((\rho^-_4)^*:S_5\times C_2\to \GL_4(\Z)$
determined by $(I_{X_5}\otimes \Z^-_{S_5})$, 
we see that the generators of DadeGroup(4,6) given by GAP
are 
$$(\rho^-_4)^*(((15)(234),\gamma),(\rho^-_4)^*((14532)),\gamma), (\rho_4)^*((1423),\gamma)$$
Since $(14532)$ has odd order, we see that $((14523),\gamma)^5=(\id,\gamma)$,
and we know that $S_5$ is generated by any 5 cycle and any transposition,
so it suffices to note that $[(15)(234)]^3=(15)$, in order to conclude that 
the preimages generate $S_5\times C_2$.
So, as required,  DadeGroup(4,6) determines  the 
lattice $(I_{X_5}\otimes \Z^-_{S_5},S_5\times C_2)=(\Z A_4,\Aut(A_4))$. 

DadeGroup(4,7) has GAP ID [4,31,7,2].
We will show that it determines the lattice $(\Lambda(A_4), \Aut(A_4))$.
This lattice is $(J_{X_5}\otimes \Z^-_{S_5},S_5\times C_2)$. 
In terms of our representation $\rho^-_4:S_5\times C_2\to \GL_4(\Z)$
determined by $(J_{X_5}\otimes \Z^-_{S_5})$, 
we see that the generators of DadeGroup(4,7) given by GAP
are 
$$\rho^-_4((132)(45),\gamma),\rho^-_4((15234),\gamma), \rho^-_4((1324),\gamma)$$
Since $(15234)$ has odd order, we see that $((15234),\gamma)^5=(\id,\gamma)$,
and we know that $S_5$ is generated by any 5 cycle and any transposition,
so it suffices to note that $[(132)(45]^3=(45)$, in order to conclude that 
the preimages generate $S_5\times C_2$.
So, as required,  DadeGroup(4,7) determines  the  lattice  
$(J_{X_5}\otimes \Z^-_{S_5},S_5\times C_2)=(\Lambda(A_4),\Aut(A_4))$.

DadeGroup(4,8) has GAP ID [4,32,21,1].  
We claim that it determines the lattice 
$(\Z B_4,W(B_4))$.
In terms of our representation $\eta_4:W(B_4)\to \GL_4(\Z)$, the generators
given by GAP are
$$\eta_4(\tau_2\tau_4(24)), \eta_4(\tau_1\tau_3\tau_4(234)), 
\eta_4(\tau_1\tau_2),
\eta_4(\tau_3\tau_4(12)(34)),\eta_4(\tau_1\tau_2(13)(24)),
\eta_4(\tau_1\tau_4(14)(23))$$
We need only check that the subgroup $H$ of $W(B_4)$ generated by the 
preimages of the generators under $\eta_4$ is $W(B_4)$.
(Note that GAP gives a structure description for the group as the wreath 
product of $C_2$ by $S_4$ so this is just a check).
Recall that $(\tau\sigma)^2=\tau\tau^{\sigma}\sigma^2$ and $\tau_i^{\tau\sigma}=\tau_{\sigma(i)}$ for any $\tau\in C_2^4$ and $\sigma\in S_4$.
Noting that $(\tau_1\tau_2)$ is in $H$ and conjugating this element by 
each of the other above generators, one can see that
$\tau_i\tau_j$ is in the group for all $1\le i\ne j\le 4$.
Looking again at the generators, we see that $(24),(12)(34),(13)(24),(14)(23)$
are also in the group.
But then since also $[\tau_1\tau_3\tau_4(234)]^2=\tau_2\tau_3(243)$
is in $H$, we have that $(243)$ is in $H$ too.
We now have $\langle (24),(12)(34),(13)(24),(14)(23),(243)\rangle=S_4$
and $\langle \tau_i\tau_j: 1\le i\ne j\le 4\rangle$ as subgroups.
But looking at the original generator $\tau_1\tau_3\tau_4(234)$, we see that
$\tau_1$ and hence all its conjugates under $S_4$ are in the group.
This shows that the group generated by its preimages contains
$\langle \tau_i:i=1,\dots,4\rangle$ and $S_4$ and then must be $W(B_4)$. 
Note that $(\Z B_4,W(B_4))$ is a hereditarily rational lattice as it is 
sign permutation.

DadeGroup(4,9) has GAP ID [4,33,16,1].
Its generators are
$$X_1=\left[\begin{array}{rrrr}1&0&0&0\\0&0&1&0\\0&1&0&0\\0&0&0&1\end{array}\right], X_2=\left[\begin{array}{rrrr}1&0&0&0\\0&0&1&0\\0&1&0&0\\1&1&1&-1\end{array}\right],
X_3=\left[\begin{array}{rrrr}0&1&0&-1\\0&0&1&-1\\1&0&0&-1\\1&1&1&-2\end{array}\right],$$
$$X_4=\left[\begin{array}{rrrr}-1&0&0&1\\0&0&-1&1\\-1&-1&-1&1\\-1&-1&-1&2\end{array}\right], X_5=\left[\begin{array}{rrrr}0&0&-1&0\\1&1&1&-2\\-1&0&0&0\\0&0&0&-1\end{array}\right],$$
$$X_6=\left[\begin{array}{rrrr}-1&-1&-1&2\\0&0&1&0\\0&-1&0&0\\-1&-1&0&1\end{array}\right],X_7=\left[\begin{array}{rrrr}1&0&0&0\\0&-1&0&0\\0&0&-1&0\\0&-1&-1&1\end{array}\right]$$

We wish to show that the associated lattice is $(\Z F_4,W(F_4))$.
We recall a standard basis of the root system $F_4$ is given by 
$$\Delta=\{\alpha_1=\e_2-\e_3,\alpha_2=\e_3-\e_4,\alpha_3=\e_4,\alpha_4=\frac{\e_1-\e_2-\e_3-\e_4}{2}\}$$
The roots in the root system $F_4$ are of the 
form 
$$\pm \e_i\pm \e_j,1\le i<j\le 4; \pm \e_i, 1\le i\le 4; \frac{\pm \e_1\pm \e_2\pm \e_3\pm \e_4}{2}$$
A $\Z$-basis for $\Z F_4$ is given by 
$$\beta=\{\e_1,\e_2,\e_3,\frac{\e_1+\e_2+\e_3+\e_4}{2}\}$$
For each of the simple roots in $\Delta$ 
we may compute the matrices of $s_{\alpha_i}$ with respect to the basis $\beta$
where 
$$s_{\alpha}(x)=x-2\frac{x\cdot \alpha}{\alpha\cdot \alpha}\alpha$$
is the simple reflection corresponding to $\alpha$.
We find that
$$s_{\alpha_1}=\left[\begin{array}{rrrr}1&0&0&0\\0&0&1&0\\0&1&0&0\\0&0&0&1\end{array}\right], s_{\alpha_2}=\left[\begin{array}{rrrr}1&0&0&0\\0&1&0&0\\-1&-1&-1&2\\
0&0&0&1\end{array}\right]$$
$$s_{\alpha_3}=\left[\begin{array}{rrrr}1&0&0&0\\0&1&0&0\\0&0&1&0\\1&1&1&-1\end{array}\right], s_{\alpha_4}=\left[\begin{array}{rrrr}0&0&0&1\\1&1&0&-1\\1&0&1&-1\\
1&0&0&0\end{array}\right]$$
We may show that  $s_{\alpha_1},\dots,s_{\alpha_4}$ are all contained in DadeGroup(4,9).  That is, we may express them as products of the generators given by GAP. Explicitly,  $s_{\alpha_1}=X_1$, $s_{\alpha_2}=X_5X_3X_4^{-1}X_1$,
$s_{\alpha_3}=X_2X_1$, $s_{\alpha_4}=X_6^{-1}X_4^{-1}X_3^{-1}X_2X_1$.
Since DadeGroup(4,9) has order $1152=|W(F_4)|$ we see that they coincide.
\end{proof} 

\begin{remark}
(Stably Rational Tori of Dimension 4 (Hoshi Yamasaki))
The 487 finite subgroups of $\GL(4,\Z)$  which correspond to stably rational tori from Hoshi and Yamasaki's list
are conjugate to a subgroup of one of the groups with  
GAP IDs on the following list:
\begin{itemize}
\item (Irreducible maximal finite subgroups)
[4,30,13,1],[4,31,7,1],[4,32,21,1]
\item (Irreducible non-maximal finite subgroup)
[4,31,6,2]
\item (Decomposable finite subgroups) 
[4,20,22,1], [4,25,9,2],[4,25,11,1]
\item (Reducible finite subgroups)
[4,24,3,4],[4,25,7,5],[4,25,8,5]
\end{itemize}
That is, these are the maximal conjugacy classes of subgroups corresponding to 
stably rational tori.
In the list above, the maximal indecomposable subgroups corresponding to stably rational tori are listed in their
paper in the proof of the result for dimension 4.  The maximal decomposable subgroups are not listed in their paper but could be 
easily derived from their results or directly from GAP.

The 7 finite subgroups of $\GL(4,\Z)$ which correspond to retract rational
tori which are not stably rational from Hoshi and Yamasaki's list are
\begin{itemize}
\item 
(6 Subgroups of DadeGroup(4,7) of GAP ID [4,31,7,2]):

[4,31,1,3],[4,31,1,4],[4,31,2,2],[4,31,3,2],[4,31,5,2],[4,31,7,2].
\item
(1 exceptional subgroup)
[4,33,2,1].
\end{itemize}
\end{remark}

\begin{theorem}\label{th:herrat}
(Hereditarily rational tori of dimension 4)
Among the 10 finite conjugacy classes of subgroups of $\GL(4,\Z)$ which are maximal among those corresponding to  stably rational tori of dimension 4, the following 8 correspond to 
hereditarily rational algebraic tori.  We list the GAP ID of each group $G$ together with the 
lattice structure of $M_G$.  
\begin{enumerate}
\item Dade Groups which correspond to hereditarily rational tori:
\begin{itemize}
\item \textup{[4,20,22,1]:} $(\Z B_2\oplus \Z A_2,\Aut(B_2)\times \Aut(A_2))$ 
\item \textup{[4,30,13,1]:} $(\Z A_2\oplus \Z A_2,(\Aut(A_2)\times \Aut(A_2))\rtimes C_2)$.
\item \textup{[4,31,7,1]:} $(\Z A_4,\Aut(A_4))$.
\item \textup{[4,32,21,1]:} $(\Z B_4,W(B_4))$.
\end{itemize}
\item Direct Products $G=H\times C_2$ of a finite matrix group H of rank 3 and
a finite matrix group $C_2$ of rank 1 where $H$ corresponds to a maximal hereditarily rational torus of rank 3.
In this case the lattice is $M_G=\inf_H^G(M_H)\oplus \Z^-_H$
\begin{itemize}
\item \textup{[4,25,9,2]:} $(\inf_{S_4}^{S_4\times C_2}\Z A_3 \oplus \Z^-_{S_3},W(A_3)\times C_2)$.
\item \textup{[4,13,6,4]:} $(\inf_{D_8}^{D_8\times C_2}L\oplus \Z^-_{D_8},D_8\times C_2)$.
\end{itemize}
\item Groups whose corresponding lattice is indecomposable but reducible with a rank 3 invariant maximal hereditarily rational sublattice and a fixed rank 1 quotient lattice.
\begin{itemize}
\item \textup{[4,25,7,5]:} $(L_1,W(B_3))$ where $L_1$ is a non-split extension of $W(B_3)$-lattices
$$0\to \Z B_3\to L_1\to \Z\to 0$$
\item \textup{[4,24,3,4]:} $(L_2,W(A_3))$ where $L_2$ is a non-split extension of 
$W(A_3)$-lattices
$$0\to \Z A_3\to L_2\to \Z\to 0$$ 
\end{itemize}
\end{enumerate}
The union of the conjugacy classes of these 8 finite subgroups of $\GL(4,\Z)$ 
produces 477 of the 487 stably rational algebraic $k$-tori 
determined by Hoshi and Yamasaki.
\end{theorem}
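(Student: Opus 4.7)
The plan is to prove the theorem in two stages: first, identify the lattice $M_G$ for each of the 8 groups listed and verify hereditary rationality using results from Section 3; second, perform a minimal GAP calculation to verify that the union of conjugacy classes of subgroups of these 8 groups yields exactly 477 of the 487 stably rational tori in Hoshi--Yamasaki's list.

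For the four Dade groups \textup{[4,20,22,1]}, \textup{[4,30,13,1]}, \textup{[4,31,7,1]}, \textup{[4,32,21,1]}, the lattice identifications are already established in the preceding proposition classifying $M_{G_k}$ for $G_k = \text{DadeGroup}(4,k)$; I would simply cite those identifications. For the four remaining groups \textup{[4,25,9,2]}, \textup{[4,13,6,4]}, \textup{[4,25,7,5]}, \textup{[4,24,3,4]}, I would extract the GAP generators and perform a direct analysis: for the direct-product groups \textup{[4,25,9,2]} and \textup{[4,13,6,4]}, decompose the standard basis into two $G$-invariant summands (a rank-3 summand on which $C_2$ acts trivially, realizing the inflation of the rank-3 lattice $M_H$, and a rank-1 summand on which $C_2$ acts by $-1$, realizing $\Z^-_H$); for the reducible groups \textup{[4,25,7,5]} and \textup{[4,24,3,4]}, exhibit a rank-3 $G$-invariant sublattice and verify by a change of basis that its structure is $\Z B_3$ (respectively $\Z A_3$) with trivial rank-1 quotient.

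With the lattices identified, hereditary rationality follows by invoking the appropriate family from Section 3: the sign-permutation lattice argument handles $\Z B_4$ and $\Z B_3$; the augmentation ideal argument handles $\Z A_3 = I_{X_4}$; the Klyachko--Florence coprime tensor product result handles $\Z A_4 \cong I_{X_5}\otimes I_{Y_2}$ (where $|X_5|=5$, $|Y_2|=2$); Proposition \ref{prop:wreath} with $n=2$ handles the wreath product $\Z A_2 \otimes P_2$ corresponding to \textup{[4,30,13,1]}; and the $G_2$ argument (or equivalently Klyachko--Florence applied to $I_{X_3}\otimes I_{Y_2}$) handles $\Z A_2$ or $\Lambda(A_2)$ as $\Aut(A_2)$-lattices. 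The remark that direct sums of hereditarily rational lattices are hereditarily rational then closes the case for \textup{[4,20,22,1]} and the two direct-product groups, after observing that $\Z^-_H$ is a sign-permutation lattice for $G = H \times C_2$, and that for any subgroup $K \leq G$ the restriction of $\inf^G_H(M_H)$ factors through $\pi_H(K) \leq H$, so hereditary rationality of $M_H$ transfers to $\inf^G_H(M_H)$. For the two reducible lattices $L_1$, $L_2$, the proposition on extensions applies directly: with $M \in \{\Z B_3, \Z A_3\}$ hereditarily rational and faithful and $P = \Z$ permutation, the extension $0 \to M \to L_i \to \Z \to 0$ forces $L_i$ to be hereditarily rational as well.

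For the counting, I would use GAP to enumerate the conjugacy classes of subgroups of each of the 8 maximal groups, deduplicate across the 8 lists using \texttt{CrystCatZClass}, and verify the total is 477 and coincides with 477 entries of Hoshi--Yamasaki's list of 487 stably rational tori. The main obstacle will be the lattice identification for the two reducible groups \textup{[4,25,7,5]} and \textup{[4,24,3,4]}: the GAP generators do not immediately reveal a root-lattice structure, so one must locate the correct $G$-invariant rank-3 sublattice, produce an explicit change-of-basis exhibiting it as $\Z B_3$ or $\Z A_3$, confirm that the resulting extension with trivial quotient is non-split (otherwise the lattice would be covered by the direct-product case), and verify the induced $G$-action on the sublattice is the full reflection action. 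These are finite but somewhat delicate matrix calculations whose bookkeeping constitutes the bulk of the work.
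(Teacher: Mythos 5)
Your proposal is correct and follows essentially the same route as the paper: the four Dade-group lattices are cited from the earlier classification, the remaining four are identified by direct analysis of the GAP generators (decomposing into invariant summands for \textup{[4,25,9,2]} and \textup{[4,13,6,4]}, and exhibiting the invariant rank-3 sublattice via a change of basis for \textup{[4,25,7,5]} and \textup{[4,24,3,4]}), hereditary rationality is then deduced from exactly the Section 3 families you name (sign permutation, augmentation ideals, Klyachko--Florence for $I_{X_5}\otimes I_{Y_2}$, Proposition~\ref{prop:wreath} with $n=2$, direct sums, and the extension-by-permutation proposition), and the count of 477 comes from the same minimal GAP enumeration of conjugacy classes of subgroups checked against Hoshi--Yamasaki. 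Your one extra step, verifying non-splitness of the extensions for \textup{[4,25,7,5]} and \textup{[4,24,3,4]}, is not needed for the rationality conclusion, since the extension proposition applies whether or not the sequence splits, and the paper's proof does not carry it out either; it only bears on the indecomposability claim in the statement of the lattice structure.
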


\begin{proof}
We have already determined  the Dade groups which correspond to hereditarily
rational tori.

We first determine the GAP IDs of the groups corresponding to a direct
sum of a maximal hereditarily rational lattice of rank 3 and a sign lattice.

We claim that the group with GAP ID [4,25,9,2] has lattice given by 
$$(\Z^-_{W(A_3)}\oplus \inf^{W(A_3)\times C_2}_{W(A_3)}\Z A_3,W(A_3)\times C_2)$$
The generators given by GAP are
$$\diag(1,A_1),\diag(-1,A_2),\diag(1,A_3),\diag(1,A_4)$$
where $A_i\in \GL_3(\Z)$ are given 
by
$$A_1=(\rho_3)^*((34)), A_2=(\rho_3)^*((124)), A_3=(\rho_3)^*((14)(23)),
A_4=(\rho_3)^*((12)(34))$$
where $(\rho_3)^*:S_4\to \GL_3(\Z)$ is the representation associated to the root lattice $(I_{X_4},S_4)=(\Z A_3,W(A_3))$ described earlier.
Since $(124)$ is odd, so is $A_2$ and so the generator $(-1,A_2)$
can be replaced by $(1,A_2)$ and $(-1,I_3)$.
Then it is clear that the group is a direct product of $\langle (-1,I_3)\rangle$ and $\langle (1,A_i):i=1,\dots,4\rangle$.
It suffices to show that $\langle (34),(124),(14)(23),(12)(34)\rangle =S_4$,
which is easily checked.
So $\langle (1,A_i\rangle=(\rho_3)^*(S_4)$
which implies that the corresponding lattice is the $C_2\times S_4$-lattice inflated from the $S_4$-lattice $I_{X_4}$ as required.


The group with GAP ID [4,13,6,4] has  
 generators 
$(I_3,-1),(A,-1),(B,-1)$
where 
$$A=\left[\begin{array}{rrr}1&0&0\\1&0&-1\\1&-1&0\end{array}\right],
B=\left[\begin{array}{rrr}0&0&1\\-1&0&1\\0&-1&1\end{array}\right]$$
Since we may replace the generators by $(I_3,-1),(A,1),(B,1)$,
the group is a direct product  $\langle (I_3,-1)\rangle\times \langle (A,1),(B,1)\rangle$ and so we may just determine the group $\langle A,B\rangle$.
We note that $\Z(\e_1-\e_3)\oplus \Z\e_2$ is stable under $\langle A,B\rangle$.
Computing the matrices with respect to the new basis
$\{\e_1-\e_3,\e_2,\e_3\}$ (or equivalently conjugating by the change of basis matrix)
we obtain
$$A'=\left[\begin{array}{rrr}0&1&0\\1&0&0\\1&-1&1\end{array}\right],
B'=\left[\begin{array}{rrr}0&1&0\\-1&0&0\\0&-1&1\end{array}\right]$$
It is then clear that the $W(B_2)$ lattice determined by $\langle A',B'\rangle$ 
satisfies 
$$0\to \Z B_2\to L\to \Z\to 0$$
Then our lattice is $(\inf_{W(B_2)}^{W(B_2)\times C_2}L\oplus \Z^-_{W(B_2)},W(B_2)\times C_2)$.

 
We next look at the groups which correspond to a reducible
lattice with a 3 dimensional invariant sublattice:

For the group with GAP ID [4,25,7,5], the generators are
$$C_1=\left[\begin{array}{rrrr}1&0&1&1\\0&1&0&0\\0&0&0&-1\\0&0&-1&0\end{array}\right],
C_2=\left[\begin{array}{rrrr}1&0&1&0\\0&0&1&0\\0&0&0&1\\0&-1&0&0\end{array}\right],$$
$$C_3=\left[\begin{array}{rrrr}1&-1&1&0\\0&-1&0&0\\0&0&-1&0\\0&0&0&1\end{array}\right],
C_4=\left[\begin{array}{rrrr}1&0&1&1\\0&1&0&0\\0&0&-1&0\\0&0&0&-1\end{array}\right]$$
Note that this determines a lattice $M$ which contains a sublattice
with basis $\{\e_2,\e_3,\e_4\}$ which is stable under the action of the 
group.  Note also that $M/(\oplus_{i=2}^4\Z\e_i)\cong \Z$.
The action on $\oplus_{i=2}^4\Z\e_i$ is determined by
the group generated by 
$$\eta_3(\tau_2\tau_3(23)),\eta_3(\tau_1(123)),
\eta_3(\tau_1\tau_2),  \eta_3(\tau_2\tau_3)$$
Since $(\tau_1(123))^2=\tau_1\tau_2(132)$, $(132)$ is in the preimage,
and then so is $\tau_1$. Then one can easily
show that $\tau_2,\tau_3$ and $(23)$ are in the preimage too.  
This shows that the group determines a $W(B_3)$-lattice $M$
which satisfies
$$0\to \Z B_3\to M\to \Z\to 0$$
So the corresponding algebraic torus is hereditarily rational.

For the group with GAP ID [4,24,3,4], the generators are
$$X_1=\left[\begin{array}{rrrr}1&-1&0&0\\0&-1&0&0\\0&-1&1&0\\0&-1&0&1\end{array}\right],
X_2=\left[\begin{array}{rrrr}1&-1&0&1\\0&-1&0&1\\0&-1&1&0\\0&-1&0&0\end{array}\right],$$
$$X_3=\left[\begin{array}{rrrr}1&-1&-1&1\\0&0&-1&1\\0&0&-1&0\\0&1&-1&0\end{array}\right],
X_4=\left[\begin{array}{rrrr}1&-1&-1&1\\0&-1&0&0\\0&-1&0&1\\0&-1&1&0\end{array}\right]$$
Note that this determines a lattice $M_4$ which contains a sublattice
$M_3=\oplus_{i=2}^4\Z\e_i$ which is stable under the action of the 
group.  Note also that $M_4/M_3\cong \Z$.
The action restricted to the sublattice  $M_3$ is determined by
the group generated by 
$$\rho_3^*((14)),\rho_3^*((134)),\rho_3^*((13)(24)),\rho_3^*((14)(23))$$
for the representation $\rho_3^*:S_4\to \GL(3,\Z)$ 
associated to the $S_4$-lattice $I_{X_4}$.  It is easily checked that the restriction of the group action on $M_3$ is a faithful action.
Since the group generated by preimages is $\langle (14),(134),(13)(24),(14)(23)\rangle$, we see that it contains the normal subgroup $\langle (13)(24),(14)(23)\rangle$ of $S_4$. Then clearly $\langle (13)(24),(14)(23),(143)\rangle =A_4$
and $\langle (13)(24),(14)(23),(143),(14)\rangle =S_4$.
So the lattice determined by this group satisfies a short exact sequence of 
$W(A_3)=S_4$ lattices given by  
$$0\to \Z A_3\to M_3\to \Z\to 0$$
This again shows that the associated group is hereditarily rational.

We then use GAP to take the union of the conjugacy classes of subgroups
corresponding to these 10 hereditarily rational lattices.  (See below for our simple use of GAP to obtain this information.)
We find that we obtain 477 hereditarily rational tori, all but 10 of the 
stably rational tori obtained by
Hoshi and Yamasaki.  
Of these 10, there are 2 maximal groups having GAP IDs [4,25,8,5] and [4,31,6,2].  In the next proposition we will describe the lattice structure of these
two groups and list the 10 exceptional subgroups.
In a subsequent section, we will give non-computational
proofs that the tori corresponding to these two groups are stably rational.
\end{proof}

\begin{remark}
Let $H_i$, $i=1,2$ be finite matrix groups  of rank $r_i$, $i=1,2$  where each is a maximal subgroup corresponding to a  hereditarily rational torus of the appropriate rank.  Then $H_1\times H_2$ is a finite matrix group corresponding to  a hereditarily rational torus of rank $r+s$ but it may not be maximal among the finite matrix groups of rank $r+s$ whose corresponding algebraic torus is stably rational.
\end{remark}

\begin{prop}
The following are the GAP IDs and lattices 
corresponding to the two finite subgroups of $\GL(4,\Z)$ which are maximal among those corresponding to stably rational tori of dimension 4 but are not known to be hereditarily rational:
\begin{itemize}
\item \textup{[4,31,6,2]:} $(J_{X_5}\otimes \Z_{A_5}^-,A_5\times C_2)$
\item \textup{[4,25,8,5]:} The corresponding lattice $L$ is a non-split extension
of $W(B_3)$-lattices 
$$0\to \Z B_3\to L\to \Z^-_{C_2^3\rtimes A_3}\to 0$$
\end{itemize}
There are 10 conjugacy classes of subgroups of these 2 groups which are not
conjugate to a subgroup of one of the 8 groups from Theorem \ref{th:herrat}.
The rationality of the corresponding algebraic tori is hence unknown.
The full list is
\begin{itemize}
\item Subgroups of \textup{[4,31,6,2]:} \textup{[4,31,3,2], [4,31,6,2]}
\item Subgroups of \textup{[4,25,8,5]:} \textup{[4,6,2,11], [4,12,4,13], [4,13,2,6], [4,13,3,6], [4,13,7,12], [4,24,4,6], [4,25,4,5], [4,25,8,5]}.
\end{itemize}
\end{prop}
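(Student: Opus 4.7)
The plan is to handle the two maximal groups separately and then invoke a short GAP enumeration for the list of ten undetermined conjugacy classes.

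For the group with GAP ID \textup{[4,31,6,2]}, I would exploit its relationship with \textup{DadeGroup(4,7)} = \textup{[4,31,7,2]}, which we have already identified as $(\Lambda(A_4),\Aut(A_4)) = (J_{X_5}\otimes\Z^-_{S_5},S_5\times C_2)$. First I would retrieve the GAP generators of \textup{[4,31,6,2]}, express them as matrices in the representation $\rho_4^-:S_5\times C_2\to \GL_4(\Z)$ from our standard notation, and check that their preimages generate a subgroup conjugate to $A_5\times C_2$ inside $S_5\times C_2$ (verifying index $2$ and that the $A_5$ factor really sits in $S_5$). Restricting the lattice isomorphism $(\Lambda(A_4),S_5\times C_2)\cong (J_{X_5}\otimes \Z^-_{S_5},S_5\times C_2)$ to $A_5\times C_2$ then gives $(J_{X_5}\otimes\Z^-_{A_5},A_5\times C_2)$, since $X_5$ remains a transitive $A_5$-set (with stabilizer $A_4$) and the restriction of $\Z^-_{S_5}$ to $A_5\times C_2$ is precisely $\Z^-_{A_5}$.

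For the group with GAP ID \textup{[4,25,8,5]} I would proceed analogously to the treatments of \textup{[4,25,7,5]} and \textup{[4,24,3,4]} in Theorem \ref{th:herrat}. Reading off the GAP generators, I expect to find (possibly after a change of basis supplied by an invariant sublattice computation) that the $\Z$-span of three of the basis vectors is stabilised by the group, and that the induced action on this rank-$3$ sublattice is that of $W(B_3)$ on $\Z B_3$ under $\eta_3$. The key additional step, which is not present in the cases \textup{[4,25,7,5]} or \textup{[4,24,3,4]}, is to determine the quotient action on the rank-$1$ quotient: I would compute the action of each generator on a representative of the quotient modulo the sublattice and check that it agrees with the character of $W(B_3)$ whose kernel is the index-$2$ subgroup $C_2^3\rtimes A_3$. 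To conclude indecomposability (non-splitness of the extension), I would verify that there is no rank-$1$ sublattice on which $W(B_3)$ acts by the same character, for instance by checking $\hat{H}^1(W(B_3),\Z B_3)$ has a nonzero component corresponding to this character, or, more directly, by exhibiting that no lift of the quotient generator to $L$ is fixed (up to sign, as dictated by the character) by all generators simultaneously.

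With the lattice structures established, the remaining task is the enumeration of the ten undetermined conjugacy classes. This is a routine GAP computation mirroring the one used in Theorem \ref{th:herrat}: build the list of conjugacy classes of subgroups of \textup{[4,31,6,2]} and \textup{[4,25,8,5]} via \texttt{ConjugacyClassesSubgroups} together with \texttt{CrystCatZClass} to obtain GAP IDs, form the union, intersect with Hoshi–Yamasaki's list of $487$ stably rational conjugacy classes, and subtract off the $477$ classes already covered by the eight hereditarily rational groups of Theorem \ref{th:herrat}. What remains should be precisely the two subgroups of \textup{[4,31,6,2]} and the eight subgroups of \textup{[4,25,8,5]} listed in the statement.

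The main obstacle is the \textup{[4,25,8,5]} case: identifying the correct invariant $\Z B_3$-sublattice and then rigorously ruling out a splitting of the short exact sequence, since a split extension would identify the lattice as $\Z B_3 \oplus \Z^-_{C_2^3\rtimes A_3}$, whose algebraic torus would be hereditarily rational as a direct sum of hereditarily rational pieces and would therefore not be one of the ten exceptional cases. The cleanest way to settle this is a direct cohomological computation of $\Ext^1_{W(B_3)}(\Z^-_{C_2^3\rtimes A_3},\Z B_3)$ or, equivalently, a check by brute force on the explicit generator matrices that no $\Z$-basis of $L$ decomposes it as a direct sum of the required pieces.
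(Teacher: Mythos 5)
Your proposal is correct and follows essentially the same route as the paper: the paper likewise reads off the GAP generators of \textup{[4,31,6,2]} as $\rho_4^-$-images of elements generating $A_5\times C_2$ (so the lattice is the restriction of $(\Lambda(A_4),\Aut(A_4))=(J_{X_5}\otimes\Z^-_{S_5},S_5\times C_2)$, i.e.\ $(J_{X_5}\otimes\Z^-_{A_5},A_5\times C_2)$), identifies the generators of \textup{[4,25,8,5]} via $\eta_3$ as acting faithfully as $W(B_3)$ on the invariant sublattice $\oplus_{i=2}^4\Z\e_i\cong\Z B_3$ with quotient character $\Z^-_{C_2^3\rtimes A_3}$, and obtains the ten undetermined classes by exactly the GAP enumeration of Theorem~\ref{th:herrat} (477 covered classes subtracted from Hoshi--Yamasaki's 487). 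The one step you add beyond the paper, the explicit non-splitness verification via $\Ext^1_{W(B_3)}(\Z^-_{C_2^3\rtimes A_3},\Z B_3)$ or brute force, is sound but redundant: if the sequence split, $L\cong\Z B_3\oplus\Z^-_{C_2^3\rtimes A_3}$ would be a sign-permutation lattice, so \textup{[4,25,8,5]} would be conjugate to a subgroup of \textup{[4,32,21,1]} $=(\Z B_4,W(B_4))$, contradicting its appearance among the ten classes produced by that very enumeration -- which is why the paper leaves non-splitness implicit.
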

Note that the lattice corresponding to \textup{[4,31,3,2]} 
is $(J_{X_5},A_5)$ which corresponds to a norm one torus.
\begin{proof}

From the proof of the last theorem, we see that the tori corresponding to all
of the maximal groups corresponding to stably rational tori except for those with  GAP IDs [4,25,8,5] and [4,31,6,2] are hereditarily rational.

For the group with GAP ID [4,25,8,5], the generators are
$$D_1=\left[\begin{array}{rrrr}-1&0&0&0\\0&-1&0&0\\0&0&0&-1\\0&0&-1&0\end{array}\right],
C_2=\left[\begin{array}{rrrr}1&0&1&0\\0&0&1&0\\0&0&0&1\\0&-1&0&0\end{array}\right],$$
$$
C_3=\left[\begin{array}{rrrr}1&-1&1&0\\0&-1&0&0\\0&0&-1&0\\0&0&0&1\end{array}\right],
C_4=\left[\begin{array}{rrrr}1&0&1&1\\0&1&0&0\\0&0&-1&0\\0&0&0&-1\end{array}\right]$$
Note that these are in fact the same generators as in the [4,25,7,5]
case except for the first one.

Note that this determines a lattice $M$ which contains a sublattice $M_0$
with basis $\{\e_2,\e_3,\e_4\}$ which is stable under the action of the 
group.  Note also that $M_1=M/M_0$ is a rank 1 lattice with 
non-trivial action.

The action on $M_0=\oplus_{i=2}^4\Z\e_i$ is determined by
the group generated by 
$$\eta_3(\tau_1\tau_2\tau_3(23)),\eta_3(\tau_1(123)),
\eta_3(\tau_1\tau_2),  \eta_3(\tau_2\tau_3)$$
Since $(\tau_1(123))^2=\tau_1\tau_2(132)$, then $(132)$ is in the preimage
and hence so is $\tau_1$.  
Then one can easily
show that $\tau_2,\tau_3$ and $(23)$ are too.  Then it is clear that the group
acts on the lattice $M_0$ as $(\Z B_3,W(B_3))$.
It is easy to check that the restriction of the group action to $M_0$
is faithful, so that we may identify the group elements with the elements of 
$W(B_3)$. 
Note that  $N=\langle \tau_1(123),\tau_1\tau_2,\tau_2\tau_3\rangle$ acts trivially on $M_1=M/M_0$.  Since $N$ contains $(\tau_1(123))^2= \tau_1,\tau_2,\tau_3$, it can be shown to contain $\langle \tau_1,\tau_2,\tau_3,(123)\rangle=C_2^3\rtimes A_3$.   
This shows that the group $G$ determines a lattice $M$
which satisfies
$$0\to \Z B_3\to M\to \Z^-_{C_2^3\rtimes A_3}\to 0$$
Note also that  $W(B_3)=C_2^3\rtimes S_3\cong C_2\times S_4$ and $N=C_2^3\rtimes A_3\cong C_2\times A_4$.
We will present a non-computational proof that the lattice corresponding to 
[4,25,8,5] is quasi-permutation.

The lattice determined by [4,31,6,2] is $
(J_{X_5}\otimes \Z_{A_5}^-,A_5\times C_2)$.
This is because the generators given by GAP are
$$\rho_4^-((15243),\gamma),\rho_4^-((132),1),\rho_4^-((12)(34),1),\rho_4^-((13)(24),1)$$
Since $(15243)$ has odd order, we see that $(1,\gamma)\in C_2$ is in the preimage.
Then it is not hard to see that $\langle (13)(24),(12)(34),(132)\rangle=A_4$
and so $\langle (15243),(132),(12)(34),(13)(24)\rangle=A_5$.

The lattice determined by [4,31,3,2] is $(J_{X_5},A_5)$.
This is because the generators given by GAP are
$$\rho_4((14352)),\rho_4((123)),\rho_4((13)(25)),\rho_4((12)(35))$$
Again, it's easy to see that $\langle (13)(25),(12)(35)\rangle=C_2\times C_2$
and $\langle (13)(25),(12)(35),(123)\rangle=A_4$
and finally $\langle (13)(25),(12)(35),(123)\rangle=A_5$.
  We will show in the next section that the
lattices $(J_{A_5\times C_2/A_4\times C_2}\otimes \Z_{A_5}^-,A_5\times C_2)$
and $(J_{A_5/A_4},A_5)$ are quasi-permutation so that the corresponding tori are stably
rational. We are not able to show that these tori are rational.
Note that that determined by the $A_5$-lattice $J_{A_5/A_4}$ is a norm one torus and we
will see that the other torus is closely related.

To explain why there are only 2 missing subgroups of [4,31,6,2] not known to correspond to stably rational tori, note that the
restriction of $(J_{X_5}\otimes \Z_{A_5}^-,A_5\times C_2)$ to the maximal subgroup $D_5\times C_2$
is $(J_{X_5}\otimes \Z_{D_{10}}^-,D_{10}\times C_2)$
which corresponds to a  hereditarily rational torus.
Note that all subgroups of $A_5\times C_2$ except $A_5$ are subgroups of $D_{10}\times C_2$.
\end{proof}

\begin{remark}
We explain our very basic use of GAP.
We mainly used the generating sets (which could have been found in~\cite{BBNWZ78}) and as a calculation tool to check our hypotheses.
All the calculations of the lattices corresponding to the groups  could be done by hand as explained above directly from the 
generating sets, with only 2 exceptions.  In the case of the lattice for the  Weyl group of $F_4$, we used GAP to find the simple reflections in the generators. In the case of DadeGroup(4,4) we used GAP to check that our proposed group
was conjugate to DadeGroup(4,4).  We hope to find simpler proofs in those 2 cases.  Note that they do not come into play in checking for rational tori.

To check in the dimension 3 and 4 cases that the conjugacy classes of subgroups of the groups corresponding to our hereditarily rational algebraic tori are
rational give all (respectively all but 10) stably rational algebraic tori
we mainly use the following function:

\begin{verbatim}
SubConjClass:=function(r,m,n,k)
 local g,sub,l,setsub;
 g:=MatGroupZClass(r,m,n,k);
 sub:=Subgroups(g);
 l:=List(sub,x->CrystCatZClass(x));
 setsub:=Set(l);
return setsub;
end;
\end{verbatim}

This function returns the conjugacy classes of subgroups of the 
group MatGroupZClass(r,m,n,k) given as a list of GAP IDs.
It depends on the GAP script written by Hoshi and Yamasaki
crystcat.gap.  This script which is available on the 
second author's website,
at \begin{verbatim} http://www.math.h.kyoto-u.ac.jp/~yamasaki/Algorithm/\end{verbatim}
 determines the GAP ID of a finite
subgroup $G$  of $\GL_n(\Z)$ where $n=2,3,4$ by the function 
\begin{verbatim}CrystCatZClass(G).\end{verbatim}  It uses the data of the 
book~\cite{BBNWZ78} to find the crystal class, Q class and Z class of a finite
subgroup of $\GL_n(\Z)$.
It is invoked using 
\begin{verbatim}
Read(``crystcat.gap''); 
\end{verbatim}
With that tool, for the proposed maximal hereditarily rational subgroups, one can find the 
union of all the conjugacy classes of subgroups in terms of their GAP IDs.
One can also find the union of all conjugacy classes of subgroups of the
Dade Groups.  By taking the difference of these two sets, we find the 
list of all GAP IDs correponding to non-rational tori.  One can then
 check them against the lists in Hoshi and Yamasaki~\cite{HY12}  which I do not
reproduce here. 
 \end{remark}

\section{Stable rationality of Exceptional tori}
In this section, we will show that algebraic $k$-tori whose
character lattices are given by $(J_{X_5},A_5)$ 
or $(J_{X_5}\otimes \Z^-_{A_5},A_5\times C_2)$ are stably rational
recovering results of Hoshi and Yamasaki in a non-computational way.
We will also give new non-computational proofs showing that the 7 algebraic
$k$-tori of dimension 4 which are retract but not stably rational.
 
Note that, for a prime $p$, Beneish~\cite{Ben98} proved that the $S_p$-lattice $J_{X_p}$ is flasque equivalent to 
$$\Ind^{S_p}_{N_{S_p}(C_p)}\Res^{S_p}_{N_{S_p}(C_p)}J_{X_p}$$
as $S_p$-lattices where $N_{S_p}(C_p)=C_p\rtimes C_{p-1}$ is the normaliser
of the cyclic $p$ Sylow subgroup of $S_p$.
[Recall however, that it is known that the $N_{S_p}(C_p)$-lattice $J_{X_p}$ is not
$C_p\rtimes C_{p-1}$-quasi permutation for primes  $p\ge 5$.] 
We intend to prove a similar result for $A_5$ and $N_{A_5}(C_5)=C_5\rtimes C_2=D_{10}$.  The arguments are similar at the start but diverge at a critical point.
This result and the fact that $J_{X_5}$ is $D_{10}$-quasi-permutation,
will allow us to show that $J_{X_5}$ is also $A_5$-quasi-permutation.
Note that this is equivalent to  the result that for a separable extension $K/k$ of degree 5
with Galois closure $L/k$ such that $\Gal(L/k)=A_5$ and $\Gal(L/K)=A_4$,
the norm one torus $R^{(1)}_{K/k}(\Gm)$ is stably rational.

We intend also to show that $J_{X_5}\otimes \Z^-_{A_5}$ is $A_5\times C_2$-quasi-permutation (and the corresponding torus stably rational)
using the result for the corresponding norm one torus and a useful Lemma
due to  Florence (see below).

The following lemma was observed by Bessenrodt-Lebruyn~\cite{BL91}.
\begin{lemma} For the transitive $S_n$-set $X_n$ with stabilizer subgroup $S_{n-1}$,
$$I_{X_n}\otimes \Z[X_n]\cong \Z[S_n/S_{n-2}]$$
\end{lemma}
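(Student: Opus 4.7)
The strategy is to realize both sides as natural subobjects of the permutation lattice $\Z[X_n]\otimes\Z[X_n]=\Z[X_n\times X_n]$ and identify them through an explicit splitting.

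First I would observe that $\Z[X_n\times X_n]$ decomposes as an $S_n$-permutation lattice into a direct sum $D\oplus OD$, where $D$ is spanned by the diagonal basis vectors $\{x_i\otimes x_i\}$ and $OD$ by the off-diagonal basis vectors $\{x_i\otimes x_j:i\ne j\}$. Both subsets are $S_n$-invariant under the diagonal action, each forming a single transitive $S_n$-set: the stabilizer of $x_1\otimes x_1$ is $S_{n-1}$, while the stabilizer of $x_1\otimes x_2$ is the subgroup of permutations fixing both $1$ and $2$, namely $S_{n-2}$. Hence $D\cong\Z[X_n]$ and $OD\cong\Z[S_n/S_{n-2}]$ as $S_n$-lattices.

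Next I would tensor the defining short exact sequence $0\to I_{X_n}\to\Z[X_n]\to\Z\to 0$ with $\Z[X_n]$, which is $\Z$-flat as a free abelian group, yielding
$$0\to I_{X_n}\otimes\Z[X_n]\to\Z[X_n\times X_n]\xrightarrow{\pi}\Z[X_n]\to 0,$$
where $\pi(x_i\otimes x_j)=x_j$. The key observation is that this sequence admits an $S_n$-equivariant splitting: the diagonal map $s\colon\Z[X_n]\to\Z[X_n\times X_n]$, $x_j\mapsto x_j\otimes x_j$, is $S_n$-equivariant and satisfies $\pi\circ s=\mathrm{id}$. Moreover the image of $s$ is precisely $D$.

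Combining the two steps, the splitting $s$ identifies the quotient $\Z[X_n]$ with the summand $D$ inside the decomposition $\Z[X_n\times X_n]=D\oplus OD$, forcing $I_{X_n}\otimes\Z[X_n]$ to be isomorphic to the complementary summand $OD\cong\Z[S_n/S_{n-2}]$. There is no real obstacle here; the entire content of the argument is the recognition that the diagonal embedding $X_n\hookrightarrow X_n\times X_n$ provides both the orbit decomposition and the splitting of the augmentation sequence after tensoring.
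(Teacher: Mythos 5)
Your proof is correct, but it takes a genuinely different route from the paper. The paper argues entirely inside $I_{X_n}\otimes \Z[X_n]$: it proposes the candidate basis $\{(\e_i-\e_j)\otimes \e_i : i\ne j\}$, verifies by explicit (essentially triangular) change-of-basis identities that its $\Z$-span coincides with the span of the standard basis $\{(\e_i-\e_{i+1})\otimes \e_j\}$, and then observes that $S_n$ permutes this basis transitively with stabilizer $S_{n-2}$. You instead work in the ambient lattice $\Z[X_n\times X_n]$, split the tensored augmentation sequence with the diagonal section $s(x_j)=x_j\otimes x_j$, and identify $I_{X_n}\otimes\Z[X_n]$ with the off-diagonal orbit via the standard fact that two complements of a common direct summand $D$ are both isomorphic to the quotient $\Z[X_n\times X_n]/D$ --- a point worth stating explicitly, since $\ker\pi$ is \emph{not} literally equal to $OD$ as a sublattice (e.g.\ $(x_1-x_2)\otimes x_1$ has a diagonal component); your phrasing ``isomorphic to the complementary summand'' handles this correctly. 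The trade-off: the paper's computation buys an explicit permutation basis sitting inside $I_{X_n}\otimes\Z[X_n]$ itself (and indeed projecting your $\ker\pi$ onto $OD$ along $D$ sends $(x_i-x_j)\otimes x_i$ to $-\,x_j\otimes x_i$, recovering that basis up to sign), while your argument is shorter, requires no linear algebra, and generalizes immediately: for any finite $G$-set $X$ it gives $I_X\otimes\Z[X]\cong \Z[(X\times X)\setminus\Delta]$ as $G$-lattices, the transitivity of the right-hand side in the case at hand being exactly the $2$-transitivity of $S_n$ on $X_n$.
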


\begin{proof}
Let $\{\e_i:i=1,\dots,n\}$ be the $\Z$-basis of the $S_n$-set $X_n$ permuted by $S_n$
via $\sigma(\e_i)=\e_{\sigma(i)}$. 
It suffices to show that $I_{X_n}\otimes \Z[X_n]$
has $\Z$-basis 
$$\{(\e_i-\e_j)\otimes \e_i: i\ne j\}$$
as then this basis is clearly transitively permuted by the action of $S_n$
with stabilizer subgroup $S_{n-2}$.

Since a $\Z$-basis of $I_{X_n}\otimes \Z[X_n]$ is 
given by 
$$\{(\e_i-\e_{i+1})\otimes \e_j: 1\le i\le n-1, 1\le j\le n\}$$
we need only show the $\Z$-span of each set contains the other.
Then
$$(\e_i-\e_j)\otimes \e_j=\sum_{k=i}^{j-1}(\e_k-\e_{k+1})\otimes \e_j, i<j$$
and
$$(\e_i-\e_j)\otimes \e_j=-\sum_{k=j}^{i-1}(\e_k-\e_{k+1})\otimes \e_j, i>j.$$
Conversely,
$$(\e_i-\e_{i+1})\otimes \e_j=(\e_j-\e_{i+1})\otimes \e_j-(\e_j-\e_i)\otimes \e_j.$$
\end{proof}

The following lemma was proved by Bessenrodt and Lebruyn but unpublished.
It was proved in Beneish~\cite{Ben98}.  Here is a simpler proof.

\begin{lemma} For $p$ prime, 
let $B_p$ be the $S_p$-lattice 
$$B_p=J_{X_p}\otimes I_{X_p}$$
Then
$$B_p\oplus \Z[X_p]\cong \Z[S_p/S_{p-2}]\oplus \Z$$ 
So $B_p$ is $S_p$-stably permutation.
\end{lemma}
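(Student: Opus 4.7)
The plan is to exhibit both $B_p\oplus \Z[X_p]$ and $\Z[S_p/S_{p-2}]\oplus \Z$ as two different extensions of the same module $J_{X_p}$, then apply a Schanuel-style pullback argument together with Ext-vanishing. First I would establish the auxiliary identity $J_{X_p}\otimes \Z[X_p]\cong \Z[S_p/S_{p-2}]$ by exactly the same method as in the preceding lemma. Tensor the short exact sequence $0\to \Z\to \Z[X_p]\to J_{X_p}\to 0$ with $\Z[X_p]$, using the $S_p$-set decomposition $X_p\times X_p=\Delta\sqcup (X_p\times X_p\setminus \Delta)$ (diagonal transitive with stabiliser $S_{p-1}$, off-diagonal transitive with stabiliser $S_{p-2}$) to rewrite $\Z[X_p]\otimes \Z[X_p]\cong \Z[X_p]\oplus \Z[S_p/S_{p-2}]$. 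The induced inclusion sends $\e_j\mapsto \sum_i \e_i\otimes \e_j$, whose diagonal component is the identity of $\Z[X_p]$, so the resulting sequence is $S_p$-split and the quotient is $\Z[S_p/S_{p-2}]$.

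Next I would tensor $0\to I_{X_p}\to \Z[X_p]\to \Z\to 0$ with $J_{X_p}$ to produce
$$0\to B_p\to \Z[S_p/S_{p-2}]\to J_{X_p}\to 0,$$
and pair this with $0\to \Z\to \Z[X_p]\to J_{X_p}\to 0$. Form the pullback $E$ over $J_{X_p}$; then $E$ sits in two short exact sequences
$$0\to B_p\to E\to \Z[X_p]\to 0 \quad \text{and} \quad 0\to \Z\to E\to \Z[S_p/S_{p-2}]\to 0.$$
I would show both split. For the second, Shapiro's lemma gives $\Ext^1_{\Z S_p}(\Z[S_p/S_{p-2}],\Z)\cong H^1(S_{p-2},\Z)=0$ since $S_{p-2}$ is finite. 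For the first, restrict to $S_{p-1}$: as $S_{p-1}$-lattices $X_p\cong X_{p-1}\sqcup \{p\}$ yields $I_{X_p}\big|_{S_{p-1}}\cong \Z[X_{p-1}]\cong J_{X_p}\big|_{S_{p-1}}$, hence $B_p\big|_{S_{p-1}}\cong \Z[X_{p-1}]\otimes \Z[X_{p-1}]\cong \Z[X_{p-1}]\oplus \Z[S_{p-1}/S_{p-3}]$ is permutation, forcing $\Ext^1_{\Z S_p}(\Z[X_p],B_p)\cong H^1(S_{p-1},B_p)=0$. Combining splittings gives $B_p\oplus \Z[X_p]\cong E\cong \Z\oplus \Z[S_p/S_{p-2}]$, as required.

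The main obstacle is the restriction computation $B_p\big|_{S_{p-1}}$: one needs the coincidence that both $I_{X_p}$ and $J_{X_p}$, while not isomorphic as $S_p$-lattices, restrict to the permutation lattice $\Z[X_{p-1}]$ on $S_{p-1}$, so that their tensor product becomes permutation. Once this is in hand, everything else is routine: the decomposition $\Z[X_p\times X_p]\cong \Z[X_p]\oplus \Z[S_p/S_{p-2}]$ is a standard orbit count, and the Schanuel pullback together with Shapiro's lemma mechanically delivers the stated isomorphism.
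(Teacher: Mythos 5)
Your proof is correct, and although it shares the paper's skeleton -- realizing $B_p\oplus \Z[X_p]$ and $\Z\oplus\Z[S_p/S_{p-2}]$ as the two extensions of $J_{X_p}$ glued by a pullback $E$ -- it departs from the paper at both substantive steps. For the identity $J_{X_p}\otimes \Z[X_p]\cong \Z[S_p/S_{p-2}]$, you split off the diagonal in $\Z[X_p\times X_p]$ using the section of $\e_j\mapsto \sum_i\e_i\otimes\e_j$ given by the diagonal projection, whereas the paper proves $I_{X_p}\otimes\Z[X_p]\cong\Z[S_p/S_{p-2}]$ by exhibiting the explicit basis $\{(\e_i-\e_j)\otimes\e_i\}$ in its preceding lemma and then dualizing; both are short, yours being structural where the paper's is combinatorial. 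More significantly, to split $0\to B_p\to E\to \Z[X_p]\to 0$ the paper proves $B_p$ is invertible via a case analysis over all Sylow subgroups of $S_p$ (a fixed-point argument for $q\neq p$, and freeness of $\Z[X_p]\vert_{C_p}\cong\Z[C_p]$ at $p$), then uses that extensions of permutation lattices by invertible lattices split. You instead kill the obstruction group directly: $\Ext^1_{\Z S_p}(\Z[X_p],B_p)\cong \hat{H}^1(S_{p-1},\Res^{S_p}_{S_{p-1}}B_p)$ by Shapiro's lemma (valid since $\Ind^{S_p}_{S_{p-1}}$ is exact, preserves projectives, and coincides with coinduction in finite index), and the key coincidence that $I_{X_p}$ and $J_{X_p}$, though non-isomorphic over $S_p$, both restrict to $\Z[X_{p-1}]$ on the point stabilizer $S_{p-1}$ makes $\Res^{S_p}_{S_{p-1}}B_p$ permutation, so the group vanishes. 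Your single cohomological vanishing is more economical than the Sylow case analysis; what it forgoes is the paper's intermediate invertibility statement for $B_p$, but that is recovered for free afterwards, since the final isomorphism exhibits $B_p$ as a direct summand of a permutation lattice.
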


\begin{proof} 
Tensoring the exact sequence
$$0\to I_{X_p}\to \Z [X_p]\to \Z\to 0 \qquad (*)$$
by $J_{X_p}=(I_{X_p})^*$, and noting that 
$$(I_{X_p})^*\otimes (\Z [S_p/S_{p-1}])^*\cong (I_{X_p}\otimes \Z [S_p/S_{p-1}])^*\cong \Z [S_p/S_{p-2}]$$
as well as the fact that permutation lattices are self-dual, 
we see that 
$$
\xymatrix{  
&\Z \ar[r]^{=}\ar@{>->}[d] &\Z\ar@{>->}[d]  \\
  B_p \ar@{>->}[r]\ar[d]^{=} & \mbox{pull-back} \ar@{->>}[r]\ar@{->>}[d] 
&\ar@{->>}[d]\Z[X_p]\\  
  B_p \ar@{>->}[r]& \Z[S_p/S_{p-2}] \ar@{->>}[r] &I_{X_p}^* 
}
$$
Since extensions of permutation lattices by permutation lattices 
are always split,
we have the exact sequence 
$$0\to B_p\to \Z\oplus \Z[S_p/S_{p-2}]\to \Z[X_p]\to 0$$
To prove the result, we need only show that 
$B_p$ is invertible since extensions of permutation lattices 
by invertible lattices are split.

To show that $B_p$ is $S_p$-invertible,
it suffices to show that $B_p$ is $Q$-invertible for each Sylow $q$-subgroup $Q$ of $S_p$.

Let $Q$ be a Sylow $q$-subgroup of $S_p$ where $q\ne p$.
Then $Q$ must fix some $\e_i\in \Z[X_p]$.
Then $I_{X_p}\vert_Q\oplus \Z=\Z[X_p]\vert_Q$. 
In fact $I_{X_p}\vert_Q$ is then $Q$-permutation with 
$\Z$-basis $\{\e_i-\e_j: j\ne i\}$.
Dualising we get
$(I_{X_p})^{*}\vert_Q\oplus \Z=\Z[X_p]\vert_Q$
and tensoring with $I_{X_p}\vert_Q$
we obtain
$$(B_p)\vert_Q\oplus I_{X_p}\vert_Q\cong \Z[S_p/S_{p-2}]\vert_Q$$
So $B_p\vert_Q$ is $Q$-stably permutation and hence $Q$-invertible.

It suffices to show that $B_p\vert P$ is $P$-invertible
where $P$ is a Sylow $p$-subgroup of $S_p$. Note that $P\cong C_p$
and  $\Z[X_p]_{C_p}\cong \Z[C_p]$. 
So 
$$0\to (B_p)_P\to (\Z\oplus \Z[S_p/S_{p-2}])_P\to \Z[P]\to 0$$
splits since $\Z[P]$ is free.

So $B_p$ is $S_p$-invertible and then 
the sequence 
$$0\to B_p\to \Z\oplus \Z[S_p/S_{p-2}]\to \Z[X_p]\to 0$$
splits to give us the result.
\end{proof}

\begin{defn}
For a group $G$, a $G$ module $M$ is cohomologically trivial if $\hat{H}^k(H,M)=0$ for all $k$
and for all subgroups $H$ of $G$. Projective $G$ modules are cohomologically
trivial.  A result in Brown~\cite[Theorem 8.10]{Bro82} shows that faithful cohomologically trivial $G$-lattices (torsion-free $G$ modules) are $G$-projective.
\end{defn}

\begin{lemma}\label{lem:cohtriv}  $\Fp I_{S_p/N_p}:=\Fp\otimes_{\Z}I_{S_p/N_p}$
is $S_p$-cohomologically trivial where $N_p=N_{S_p}(C_p)$ is the normaliser of a cyclic Sylow $p$-subgroup $C_p$ of $S_p$.
\end{lemma}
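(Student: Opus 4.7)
The plan is to exploit the defining short exact sequence $0 \to I_{S_p/N_p} \to \Z[S_p/N_p] \to \Z \to 0$ of $S_p$-lattices. Since every term is $\Z$-free, tensoring with $\Fp$ produces a short exact sequence
$$0 \to \Fp I_{S_p/N_p} \to \Fp[S_p/N_p] \to \Fp \to 0$$
of $\Fp S_p$-modules. By the cohomological-triviality criterion (Brown~\cite[Theorem 8.10]{Bro82} and the usual Sylow reduction), it suffices to verify the vanishing of $\hat{H}^*(Q,\Fp I_{S_p/N_p})$ for a Sylow $q$-subgroup $Q$ of $S_p$ for each prime $q$ dividing $|S_p|$.

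For $q \ne p$ the vanishing is automatic: $\Fp I_{S_p/N_p}$ is annihilated by $p$, while the Tate cohomology of $Q$ is annihilated by $|Q|$, and $\gcd(|Q|,p)=1$. So the substantive case is $q=p$, where $Q=C_p$ is cyclic of order $p$.

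For this case the plan is to compute $\Fp[S_p/N_p]|_{C_p}$ via Mackey, identifying $S_p/N_p$ with the set of Sylow $p$-subgroups of $S_p$ under conjugation. The subgroup $C_p$ is itself a fixed point; any other Sylow $p$-subgroup $Q' \ne C_p$ satisfies $C_p \cap N_{S_p}(Q') = 1$, since $Q'$ is the unique Sylow $p$-subgroup of its own normalizer, so any nontrivial $p$-subgroup of $N_{S_p}(Q')$ lies inside $Q'$, forcing $C_p = Q'$. Hence $C_p$ acts freely on the complement, yielding
$$\Fp[S_p/N_p]|_{C_p} \cong \Fp \oplus \Fp[C_p]^a, \qquad a = \tfrac{(p-2)!-1}{p},$$
and the augmentation $\Fp[S_p/N_p] \to \Fp$ restricts to the identity on the distinguished $\Fp$-summand (coming from the fixed point $C_p$) and to the standard augmentation on each $\Fp[C_p]$-summand. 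Since $\Fp[C_p]$ is $C_p$-free, its Tate cohomology vanishes, so the map $\hat{H}^*(C_p,\Fp[S_p/N_p]) \to \hat{H}^*(C_p,\Fp)$ induced by the augmentation is an isomorphism. The long exact Tate sequence attached to the short exact sequence above then yields $\hat{H}^*(C_p,\Fp I_{S_p/N_p}) = 0$, completing the proof. The only step requiring genuine care is the orbit analysis of $C_p$ acting on Sylow $p$-subgroups of $S_p$; once that decomposition is in hand, everything else is formal.
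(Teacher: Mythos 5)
Your proof is correct and takes essentially the same route as the paper: your orbit analysis of $C_p$ acting on $S_p/N_p\cong \Syl_p(S_p)$, hinging on a Sylow $p$-subgroup being the unique Sylow subgroup of its own normaliser, is exactly the paper's Mackey double-coset computation, and it produces the same decomposition $\Fp\oplus \Fp[C_p]^a$. The only differences are cosmetic bookkeeping: for $q\ne p$ you use annihilation by coprime orders where the paper invokes complete reducibility of $\Fp Q$-modules, and at $q=p$ you deduce $\hat{H}^*(C_p,\Fp I_{S_p/N_p})=0$ from the long exact Tate sequence via the augmentation isomorphism, rather than from the paper's splitting $\Fp[S_p/N_p]\cong \Fp I_{S_p/N_p}\oplus\Fp$ (using $p\nmid [S_p:N_p]$) together with Krull--Schmidt --- which suffices equally, since $C_p$ has no nontrivial proper subgroups.
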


\begin{proof}
Note that tensoring the augmentation sequence for $S_p/N_p$
by $\Fp$ is exact.
Since $p$ does not divide $[S_p:N_p]$,
the $\Fp S_p$-exact  sequence
$$0\to \Fp I_{S_p/N_p}\to \Fp[S_p/N_p]\to \Fp\to 0$$
splits
and so 
$$\Fp[S_p/N_p]\cong \Fp I_{S_p/N_p}\oplus \Fp$$
It suffices to check whether the restrictions to Sylow subgroups 
are cohomologically trivial.
For any Sylow $q$-subgroup for $q\ne p$, representations of $\Fp Q$
are completely reducible and so all are projective and hence 
cohomologically trivial.
So it suffices to check whether $\Fp I_{S_p/N_p}\vert P$ is cohomologically trivial for  a cyclic Sylow $p$-subgroup $P=C_p$.
By Mackey's Theorem,
$$\Res^{S_p}_P\Ind^{S_p}_{N_p}\Fp=\oplus_{x\in P\backslash S_p/N_p}\Fp[P/P\cap N_p^x]$$
Since $P\cong C_p$, $P\cap N_p^x$ is either $P$ or $\{1\}$.
We claim that the unique double coset with $P\cap N_p^x=P$ is $PxN_p=N_p$.
Suppose $P\cap N_p^x=P$. Then $P\le N_p^x$ and so $P^{x^{-1}}\le N_p$.
But $P$ is the unique $p$-Sylow subgroup of $N_p$ and so $P^{x^{-1}}=P$
which means $x^{-1}\in N_p$.  Then we have $x\in N_p$ and $PxN_p=N_p$.
So for all non-trivial double cosets $PxN_p\ne N_p$, we have
$P\cap N_p^x=1$.
This means that 
$$\Res^{S_p}_P\Ind^{S_p}_{N_p}\Fp=\Fp\oplus (\Fp P)^k$$
for some $k$.
Since $\Fp[S_p]$ satisfies Krull Schmidt, we have that 
$$\Res^{S_p}_P\Fp I_{S_p/N_p}\oplus \Fp\cong \Fp \oplus (\Fp P)^k$$
implies that 
$\Res^{S_p}_P\Fp I_{S_p/N_p}\cong (\Fp P)^k$ is free
and so cohomologically trivial.
\end{proof}

\begin{prop}~\label{prop:clA5}
A projective  $A_5$-lattice is 
$A_5$-stably permutation.
\end{prop}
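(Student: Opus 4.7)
The plan is to reduce the statement to the triviality of the locally free class group $\text{Cl}(\Z A_5)$, whence every projective $\Z A_5$-lattice is stably free, and therefore stably permutation since $\Z[A_5]=\Z[A_5/\{1\}]$ is itself a permutation lattice.

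First I would record the Wedderburn decomposition of the rational group algebra,
$$\Q A_5 \;\cong\; \Q \;\times\; M_4(\Q) \;\times\; M_5(\Q) \;\times\; M_3(\Q(\sqrt 5)),$$
in which no simple factor is a totally definite quaternion algebra. Hence $A_5$ satisfies the Eichler condition, so by Jacobinski's theorem cancellation holds in the category of projective $\Z A_5$-modules: two projective $\Z A_5$-lattices are isomorphic if and only if they are stably isomorphic, and the class of a projective in $K_0(\Z A_5)$ is determined by its rational character together with its class in $\text{Cl}(\Z A_5)$.

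Next, I would invoke the fact, due originally to Reiner--Ullom and detailed in Curtis--Reiner Vol.~II, that $\text{Cl}(\Z A_5)=0$. The argument proceeds via the conductor (Mayer--Vietoris) square comparing $\Z A_5$ to a maximal $\Z$-order $\mathcal M \supset \Z A_5$ in $\Q A_5$. Morita invariance and the fact that $\Q(\sqrt{5})$ has class number one give $\text{Cl}(\mathcal M)=0$, and the analysis of local unit groups in the conductor square kills the remaining contribution, yielding $\text{Cl}(\Z A_5)=0$.

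Combining these two inputs, any projective $\Z A_5$-lattice $P$ satisfies $P\oplus \Z[A_5]^m \cong \Z[A_5]^n$ for some $m,n\geq 0$, so $P$ is $A_5$-stably permutation. The principal obstacle is the second step: certifying $\text{Cl}(\Z A_5)=0$ relies on nontrivial class-group machinery imported from the literature rather than on the lemmas just established in the preceding subsection. An alternative more self-contained route would be to apply Conlon--Swan induction to write $[P]$ in $K_0(\Z A_5)$ as a sum of classes induced from hyperelementary subgroups of $A_5$ (namely the cyclic subgroups together with $C_2^2$, $S_3$, and $D_{10}$) and verify the stable-permutation property for each such inducing datum; this is the backup plan if one prefers to avoid quoting the triviality of the class group.
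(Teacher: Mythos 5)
Your proposal is correct and follows essentially the same route as the paper: both arguments rest on $\Cl(\Z[A_5])=0$ (the paper cites Endo--Miyata \cite{EM76}, you cite the Reiner--Ullom computation via Curtis--Reiner) together with \cite[38.22]{CR87} to conclude that projective $A_5$-lattices are stably free, hence stably permutation since $\Z[A_5]$ is itself permutation. Your appeal to the Eichler condition and Jacobinski cancellation is superfluous for this statement --- stable freeness already follows from the vanishing of the class group and the rank map, with no cancellation needed --- but it does no harm.
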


\begin{proof}
By Endo and Miyata~\cite{EM76}, the projective class group of the group ring of $A_5$, $\Z [A_5]$, is $\Cl(\Z [A_5])=0$.  But the projective class group is the 
kernel of the rank homomorphism $K_0(\Z [A_5])\to \Z$ which sends any finitely
generated projective $A_5$ lattice to its rank.
Since this class group is zero, it shows that any projective $A_5$-lattice 
has the same class in $K_0(\Z A_5)$ as a free $A_5$-lattice of the same rank.
But then by ~\cite[38.22]{CR87}, we see that a projective $A_5$-lattice is $A_5$-stably free.
This shows that all cohomologically trivial faithful $A_5$-lattices are stably free and hence
stably permutation.
\end{proof}

\begin{remark} Projective $G$-lattices are stably permutation  for any finite group $G$
with splitting field $\Q$ (e.g. $S_n$).~\cite{EM76}, see also~\cite[Lemma 2.3.1]{Lor05}. This was used in~\cite{BL91,Ben98}.
Note that the splitting field for $A_5$ is $\Q(\sqrt{5})$.
\end{remark}

\begin{lemma} An $\Fp G$-module $M$ is projective if and only if
$\Res^G_PM$ is projective for a Sylow $p$-subgroup $P$ of $G$.
In particular, if $G$ is a transitive subgroup of $S_p$, then for the $G$-set $X_p\cong G/G\cap S_{p-1}$, we have that $\Fp[X_p]$ is projective as an $\Fp G$-module.\label{lem:fpproj}
\end{lemma}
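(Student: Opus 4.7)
The plan is to dispatch both assertions using only the standard transfer argument for detection of projectivity at a Sylow subgroup, together with an orbit-counting observation for the second part.

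For the first equivalence, the forward direction is immediate: if $M$ is $\Fp G$-projective then $\Res^G_P M$ is $\Fp P$-projective, since $\Fp G$ is a free (hence projective) right $\Fp P$-module, so restriction carries projectives to projectives. For the converse, I would use the fact that $[G:P]$ is coprime to $p$ and is therefore invertible in $\Fp$. The counit of the $(\Ind^G_P,\Res^G_P)$ adjunction gives a natural $\Fp G$-surjection $\varepsilon\colon \Ind^G_P \Res^G_P M \to M$, and the map $m\mapsto \tfrac{1}{[G:P]}\sum_{gP\in G/P} g\otimes g^{-1}m$ provides an $\Fp G$-linear splitting of $\varepsilon$. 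Hence $M$ is a direct summand of $\Ind^G_P \Res^G_P M$; since induction preserves projectivity and $\Res^G_P M$ is projective by hypothesis, $\Ind^G_P \Res^G_P M$ is $\Fp G$-projective, and therefore so is its direct summand $M$.

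For the application to $\Fp[X_p]$ when $G\le S_p$ is transitive on $X_p$, the key point is that a Sylow $p$-subgroup $P$ of $G$ acts transitively on $X_p$. Indeed, transitivity of $G$ forces $p\mid |G|$ by orbit-stabilizer, and since the $p$-part of $|S_p|$ is exactly $p$, we have $|P|=p$, so $P\cong C_p$. For any $x\in X_p$ the stabilizer $G_x$ satisfies $|G_x|=|G|/p$, which is coprime to $p$, so $P\not\le G_x$; equivalently $P$ has no fixed points on $X_p$. Thus every $P$-orbit on $X_p$ has size $p$, and since $|X_p|=p$ there is a single $P$-orbit. This yields an $\Fp P$-isomorphism $\Res^G_P \Fp[X_p] \cong \Fp[P]$, which is free and in particular projective over $\Fp P$. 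Applying the first part of the lemma gives that $\Fp[X_p]$ is projective as an $\Fp G$-module, completing the proof.

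The main (mild) obstacle is purely bookkeeping: making sure that $[G:P]$ being a unit in $\Fp$ is used correctly to split the counit, and that the stabilizer order argument is invoked correctly to rule out $P$-fixed points on $X_p$. Both steps are entirely standard.
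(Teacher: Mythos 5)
Your proof is correct and follows essentially the same route as the paper: the converse direction via the averaging section $m\mapsto \frac{1}{[G:P]}\sum_{gP\in G/P}g\otimes g^{-1}m$ of the counit $\Ind^G_P\Res^G_P M\to M$ (using that $[G:P]$ is a unit in $\Fp$), so that $M$ is a summand of an induced projective, and then the observation that $\Res^G_P\Fp[X_p]\cong\Fp[C_p]$ is free. The only difference is that you spell out the orbit--stabilizer argument showing the Sylow subgroup $C_p$ acts simply transitively on $X_p$, which the paper states without proof.
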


\begin{proof}
The natural surjection 
$\pi:\Ind^G_P\Res^G_PM\to M$
has a section
$$s:M\to \Ind^G_P\Res^G_PM, m\to \frac{1}{[G:P]}\sum_{gP\in G/P}g\otimes g^{-1}m$$
since $[G:P]$ is invertible in $\Fp$.
Then $M$ is an $\Fp G$ direct summand of $\Ind^G_P\Res^G_PM$.

If $\Res^G_PM$ is projective, so is $\Ind^G_P\Res^G_PM$ and hence so 
is $M$ by the previous remark.

The converse is clear.

Since $G$ is a transitive subgroup of $S_p$, it has a cyclic $p$-Sylow subgroup
$C_p$. Then $\Fp[X_p]$ restricted to the cyclic $p$-Sylow subgroup $C_p$ is isomorphic to the 
free $\Fp[C_p]$-module  $\Fp[C_p]$.
\end{proof}

\begin{lemma}~\label{lem:flasque} If $0\to M\to P\to L\to 0$ and $0\to M'\to Q\to L\to 0$
are 2 short exact sequences of $G$ modules with $P,Q$ $G$-permutation,
then $M\sim M'$ where $\sim$ denotes flasque equivalence.
\end{lemma} 

\begin{proof} The pullback diagram gives two exact 
sequences $0\to M\to E\to Q\to 0$ and $0\to M'\to E\to P\to 0$.
Note that the pullback module $E$ is a $G$-lattice (i.e. is $\Z$-torsion
free).  So $M\sim M'$ as required.
\end{proof}

\begin{prop} Suppose $G$ is a transitive subgroup of $S_p$
for which all $G$-projective lattices are $G$-stably permutation.
Let $N=N_G(C_p)$ be the normaliser subgroup of a (cyclic) Sylow $p$ subgroup $C_p$.
Let $M$ be a $G$-lattice such that
there exists a short exact sequence of $G$ modules
$$0\to M\to P\to X\to 0$$
where
\begin{itemize}
\item $P$ is $G$-permutation.
\item $X$ $p$-torsion 
\item  $X\otimes \Fp I[G/N]$ is cohomologically trivial
\end{itemize}
then
$$M\sim \Ind^G_N\Res^G_N(M)$$ 
where  $\sim$ denotes flasque equivalence.
\end{prop}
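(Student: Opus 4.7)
The plan is to derive the flasque equivalence by establishing that the $G$-lattice $I[G/N]\otimes M$ is $G$-quasi-permutation. Tensoring the augmentation sequence $0\to I[G/N]\to \Z[G/N]\to \Z\to 0$ with the lattice $M$ and using the projection formula $\Z[G/N]\otimes M\cong \Ind_N^G\Res_N^G M$ yields the short exact sequence
$$0\to I[G/N]\otimes M\to \Ind_N^G\Res_N^G M\to M\to 0. \qquad (\ast)$$
If $I[G/N]\otimes M$ is $G$-quasi-permutation, combining $(\ast)$ with a quasi-permutation resolution $0\to I[G/N]\otimes M\to Q_1\to Q_2\to 0$ via pushout produces a common extension $E$ sitting in $0\to Q_1\to E\to M\to 0$ and $0\to \Ind_N^G\Res_N^G M\to E\to Q_2\to 0$. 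Computing classes in the Grothendieck group of $G$-lattices modulo permutation classes gives $[M]=[\Ind_N^G\Res_N^G M]$, so the two lattices have the same flasque class and $M\sim \Ind_N^G\Res_N^G M$.

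By the hypothesis on $G$, to show that $I[G/N]\otimes M$ is $G$-stably permutation (hence quasi-permutation) it suffices to show it is $G$-projective, equivalently $G$-cohomologically trivial, which I check Sylow by Sylow. For the cyclic Sylow $p$-subgroup $C_p\subseteq N$: since $p\nmid[G:N]$, the coset $N$ is the unique $C_p$-fixed point in $G/N$ and all other $C_p$-orbits have size $p$. Thus $\Z[G/N]|_{C_p}\cong \Z\oplus \Z[C_p]^k$, the augmentation sequence splits $C_p$-equivariantly, and Krull--Schmidt for $\Z C_p$-lattices identifies $I[G/N]|_{C_p}\cong \Z[C_p]^k$ as a free $C_p$-module. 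Tensoring with the $\Z$-free restriction $M|_{C_p}$ produces a free, hence projective, $C_p$-module.

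For a Sylow $\ell$-subgroup $Q_\ell$ with $\ell\ne p$ I would tensor the given short exact sequence with $I[G/N]$ to obtain
$$0\to I[G/N]\otimes M\to I[G/N]\otimes P\to I[G/N]\otimes X\to 0, \qquad (\dagger)$$
and then exploit that the middle term is $G$-quasi-permutation: tensoring the augmentation sequence with the permutation lattice $P$ produces $0\to I[G/N]\otimes P\to \Z[G/N]\otimes P\to P\to 0$ with both flanking terms permutation. Since $X$ is $p$-torsion, the hypothesis translates into $I[G/N]\otimes X$ being $\Fp G$-projective. Lifting this $\Fp G$-projective module to a $G$-stably permutation lattice via the hypothesis on $G$, and then combining with $(\dagger)$ by a pullback construction, should identify the flasque class of $I[G/N]\otimes M$ with that of the quasi-permutation lattice $I[G/N]\otimes P$ and so complete the verification.

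The main obstacle lies in this last step: constructing the lattice lift of the $\Fp G$-projective module $I[G/N]\otimes X$ and carrying out the pullback so that the resulting sequences cohere into a quasi-permutation resolution of $I[G/N]\otimes M$. This is the precise juncture at which the hypothesis that $G$-projective lattices are $G$-stably permutation enters in an essential way, and where one expects Beneish's $S_p$-argument to diverge for a general transitive subgroup $G\subseteq S_p$.
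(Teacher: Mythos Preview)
Your reduction via $(\ast)$ is sound: if $I[G/N]\otimes M$ is quasi-permutation then pushing out along a quasi-permutation resolution does give $M\sim \Ind^G_N\Res^G_N M$. The gap is in your attempt to verify this by proving the stronger claim that $I[G/N]\otimes M$ is $G$-projective. Your $C_p$ argument is fine, but for a Sylow $\ell$-subgroup $Q_\ell$ with $\ell\ne p$ there is no reason for $I[G/N]\otimes M$ to be $Q_\ell$-free: already $I[G/N]$ itself has rank $[G:N]-1$, which need not be divisible by $|Q_\ell|$. So projectivity is simply false in general, and the Sylow-by-Sylow strategy for cohomological triviality cannot succeed. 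You sense this and pivot to $(\dagger)$, but then you are no longer proving a Sylow-local statement, and you stop short of finishing. (In fact $(\dagger)$ can be pushed through: pulling back along a free cover $F\twoheadrightarrow I[G/N]\otimes X$ with projective, hence stably permutation, kernel $K$ yields $I[G/N]\otimes M\sim E$ and $0\to K\to E\to I[G/N]\otimes P\to 0$; since $K$ and $I[G/N]\otimes P$ are both quasi-permutation one gets $E\sim 0$.)

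The paper sidesteps all of this by never touching $I[G/N]\otimes M$. Instead it applies $\Ind^G_N\Res^G_N$ directly to the given sequence $0\to M\to P\to X\to 0$ and exploits the fact that, because $X$ is $p$-torsion and $p\nmid[G:N]$, one has $\Ind^G_N\Res^G_N X\cong X\oplus(\Fp I_{G/N}\otimes X)$. The second summand is cohomologically trivial by hypothesis, so it admits a two-term resolution $0\to K\to F\to \Fp I_{G/N}\otimes X\to 0$ with $F$ free and $K$ projective, hence $G$-stably permutation. Adding this to the \emph{original} sequence gives
\[
0\to M\oplus K\to P\oplus F\to \Ind^G_N\Res^G_N X\to 0,
\]
which now has the same cokernel as the $\Ind^G_N\Res^G_N$ sequence and a permutation middle term. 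Lemma~\ref{lem:flasque} then yields $M\oplus K\sim \Ind^G_N\Res^G_N M$, and since $K$ is stably permutation, $M\sim \Ind^G_N\Res^G_N M$. This is shorter and avoids the false intermediate claim.
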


\begin{proof}
Note that the hypothesis of transitivity implies that a Sylow $p$ subgroup 
$C_p$ of $G$ is cyclic of order $p$.
So $N=N_G(C_p)\le N_{S_p}(C_p)=C_p\rtimes C_{p-1}$
and so $N=N_G(C_p)=C_p\rtimes (C_{p-1}\cap G)$.

Applying $\Ind^G_N\Res^G_N$,
we get 
$$0\to \Ind^G_N\Res^G_NM\to \Ind^G_N\Res^G_NP\to \Ind^G_N\Res^G_NX\to 0$$
Since $X$ is $p$-torsion,
$\Ind^G_N\Res^G_NX= \Fp[G/N]\otimes_{\Fp}X$.
We have already noted that $\Fp[G/N]=\Fp\oplus \Fp I_{G/N}$.
So 
\begin{equation}
\Ind^G_N\Res^G_NX=X\oplus (\Fp I_{G/N}\otimes X)\label{eq:indresx}
\end{equation}

By hypothesis, $\Fp I_{G/N}\otimes X$ is cohomologically trivial.
Let 
$$0\to K\to F\to \Fp I_{G/N}\otimes X\to 0$$
be an exact sequence of $G$-modules with $F$ a free $G$-module.
Then $K$ is also cohomologically trivial and $\Z$-free.
By~\cite[Theorem 8.2]{Bro82}, this implies that $K$ is $G$-projective and hence $G$-stably permutation
by hypothesis.
Adding this sequence to the original and recalling (\ref{eq:indresx}), we obtain
$$0\to M\oplus K\to P\oplus F\to \Ind^G_N\Res^G_NX\to 0$$
But then by Lemma~\ref{lem:flasque}, we see that $M\oplus K\sim \Ind^G_N\Res^G_NM$.
Since $K$ is $G$-stably permutation, we see that $M\sim M\oplus K$
and so $M\sim \Ind^G_N\Res^G_NM$.

\end{proof}

\begin{cor} For an odd prime $p$, the transitive $S_p$-set $X_p$, 
 and $N=N_{S_p}(C_p)$, we have that
that $$J_{X_p}\sim \Ind^{S_p}_{N}J_{N/C_{p-1}}$$
as $S_p$-lattices.

Also, for the transitive $A_5$-set $X_5$,
we have that
$$J_{X_5}\sim \Ind^{A_5}_{D_{10}}J_{D_{10}/C_2}$$
as $A_5$-lattices. It follows that $J_{X_5}$ is $A_5$-quasi-permutation.
\end{cor}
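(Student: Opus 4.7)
The plan is to deduce both flasque equivalences in the statement by applying the preceding Proposition to the $G$-lattice $M = \Z \oplus J_{X_p}$, where $G$ is $S_p$ in the first case and $A_5$ in the second, after constructing an explicit embedding of $M$ into a permutation $G$-lattice with $p$-torsion cokernel. A detour through $\Z \oplus J_{X_p}$ rather than $J_{X_p}$ itself is forced here: since $\Q J_{X_p}$ is irreducible with no trivial summand, $J_{X_p}$ alone cannot embed in a permutation $G$-lattice of the same rank with torsion cokernel, whereas $\Q \oplus \Q J_{X_p} = \Q[X_p]$ is the rational span of the permutation lattice $\Z[X_p]$, opening the door to the required embedding.

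The key construction is as follows. I will define a $G$-equivariant $\Z$-linear map $\iota \colon J_{X_p} \to \Z[X_p]$ by $\iota(\overline{\e_i}) = p \e_i - \sum_{j=1}^{p} \e_j$, well-defined because $\sum_{i=1}^p \overline{\e_i} = 0$ in $J_{X_p}$, $G$-equivariant because $\sum_j \e_j$ is $G$-fixed, and injective because rationally it is $p$ times the canonical splitting $\Q J_{X_p} \hookrightarrow \Q[X_p]$. Combining $\iota$ with the inclusion $\Z \hookrightarrow \Z[X_p]$ sending $1 \mapsto \sum_j \e_j$ yields the $G$-equivariant injection
$$\phi \colon \Z \oplus J_{X_p} \hookrightarrow \Z[X_p], \qquad \phi(n, v) = n \sum\nolimits_j \e_j + \iota(v).$$
Using $\iota(\overline{\e_i}) + \sum_j \e_j = p \e_i$, one sees that the image of $\phi$ equals $\Z \sum_j \e_j + p \Z[X_p]$, so the cokernel is $\Fp[X_p]/\Fp\cdot\overline{\sum_j \e_j} \cong \Fp J_{X_p}$, a $p$-torsion $\Fp[G]$-module of dimension $p-1$. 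The hypothesis of cohomological triviality of $\Fp J_{X_p} \otimes_{\Fp} \Fp I_{G/N}$ follows exactly as in the proof of Lemma~\ref{lem:cohtriv}: the restriction of $\Fp I_{G/N}$ to a Sylow $p$-subgroup $C_p$ is $\Fp[C_p]$-free, and the tensor product of any $\Fp$-module with a free $\Fp[C_p]$-module is again $\Fp[C_p]$-free.

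Applying the Proposition then gives $\Z \oplus J_{X_p} \sim_G \Ind^G_N \Res^G_N(\Z \oplus J_{X_p})$. Since $N$ acts transitively on $X_p$ with stabilizer the complement of $C_p$ in $N$ (which is $C_{p-1}$ for $G = S_p$ and $C_2$ for $G = A_5$), one has $\Res^G_N J_{X_p} = J_{N/C_{p-1}}$, and the right-hand side becomes $\Z[G/N] \oplus \Ind^G_N J_{N/C_{p-1}}$. Stripping the permutation summands $\Z$ and $\Z[G/N]$, which do not affect the flasque class, then yields $J_{X_p} \sim_G \Ind^G_N J_{N/C_{p-1}}$; the hypothesis of the Proposition on projective lattices is verified for $G = S_p$ by the remark following Proposition~\ref{prop:clA5} and for $G = A_5$ by Proposition~\ref{prop:clA5} itself.

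For the final assertion, I will combine the flasque equivalence $J_{X_5} \sim_{A_5} \Ind^{A_5}_{D_{10}} J_{D_{10}/C_2}$ just obtained with the fact that $\Res^{A_5}_{D_{10}} J_{X_5}$ is $D_{10}$-quasi-permutation: by Proposition~\ref{prop:d2n}, this restriction is isomorphic to $I_{X_5} \otimes \Z^-_{C_5} \cong I_{X_5} \otimes I_{D_{10}/C_5}$, and since $\gcd(|X_5|, |D_{10}/C_5|) = \gcd(5, 2) = 1$, this is hereditarily rational by the Klyachko--Florence result~\cite{Kly88,Flo1} and hence $D_{10}$-quasi-permutation. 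Induction preserves quasi-permutation, so $\Ind^{A_5}_{D_{10}} J_{D_{10}/C_2}$ has flasque class zero, and the flasque equivalence forces $J_{X_5}$ to have the same flasque class, proving it is $A_5$-quasi-permutation. The main obstacle in this plan is the construction of $\phi$: one must recognize that adjoining a trivial summand to $J_{X_p}$ is exactly what matches the rank of a candidate permutation lattice in the appropriate rational span, and the formula for $\iota$ is then essentially dictated up to scaling by $G$-equivariance combined with the requirement that the cokernel be $p$-torsion.
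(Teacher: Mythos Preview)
Your proof is correct and in fact takes a genuinely simpler route than the paper's. Both approaches apply the preceding Proposition to a lattice of the form $M=(\text{stably permutation})\oplus J_{X_p}$ sitting inside a permutation lattice with cokernel $\Fp J_{X_p}$, and both verify the cohomological-triviality hypothesis for $\Fp J_{X_p}\otimes\Fp I_{G/N}$. The difference is in the choice of the auxiliary summand: the paper tensors the sequence $0\to I_{X_p}\oplus\Z\to\Z[X_p]\to\Z/p\to 0$ by $J_{X_p}$, obtaining
\[
0\;\longrightarrow\;B_p\oplus J_{X_p}\;\longrightarrow\;\Z[X_p]\otimes J_{X_p}\;\longrightarrow\;\Fp J_{X_p}\;\longrightarrow\;0,
\]
and must then invoke the earlier lemma that $B_p=J_{X_p}\otimes I_{X_p}$ is $S_p$-stably permutation in order to strip the $B_p$ summand. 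Your direct embedding $\phi\colon\Z\oplus J_{X_p}\hookrightarrow\Z[X_p]$ produces the same cokernel $\Fp J_{X_p}$ with the trivially permutation summand $\Z$ in place of $B_p$, so no auxiliary stable-permutation lemma is needed. Your cohomological-triviality check (free on $C_p$ because tensoring with a free $\Fp[C_p]$-module preserves freeness; automatic on Sylow $q$-subgroups for $q\ne p$) is also a bit more direct than the paper's exact-sequence argument, though both are valid. One small point worth making explicit: Lemma~\ref{lem:cohtriv} is stated only for $G=S_p$, so when $G=A_5$ you are implicitly using that its proof goes through verbatim for any transitive $G\le S_p$ with $N=N_G(C_p)$; this is indeed the case, since the Mackey argument there only needs that $C_p$ is the unique Sylow $p$-subgroup of its normalizer.
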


\begin{proof}
We will apply the previous proposition.  Note that $G=S_p$, for an odd prime $p$ and $G=A_5$
satisfy the hypothesis that all projective $G$-lattices are $G$-stably permutation.
Note also that $A_5$ is a transitive subgroup of $S_5$ such that the normaliser of a cyclic
subgroup of order 5 is $D_{10}$, the dihedral group of order 10.
We need to construct an appropriate $G$-exact sequence to apply the proposition.

For any $G$ set $Y$ of size $n$,
there is an inclusion of $G$-lattices
$\alpha: I_{Y}\oplus \Z\to \Z[Y]$
where $\alpha\vert_{I_{Y}}$ is the inclusion and for $n=|Y|$ and $\{\e_i:i=1,\dots,n\}$ a $\Z$-basis of $\Z[Y]$, 
$\alpha:\Z\to \Z[Y], 1\to \sum_{i=1}^n\e_i$.
Since $\{\e_1-\e_2,\dots,\e_{n-1}-\e_n,\e_n\}$
is a basis for $\Z[Y]$
and 
$\{\e_1-\e_2,\dots,\e_{n-1}-\e_n,n\e_n\}$
is a basis for $I_Y\oplus \Z$,
it is easily checked that 
$$0\to I_Y\oplus \Z\to \Z[Y]\to \Z/n\Z\to 0$$
is a short exact sequence of $G$-lattices
with $\Z/n\Z$ having trivial action.

Letting $Y=X_p$, we set $B_p=J_{X_p}\otimes I_{X_p}=(I_{X_p})^*\otimes I_{X_p}$
where $X_p$ is a transitive $G$-set of size $p$. Then tensoring by $J_{X_p}$,  
 we obtain
$$0\to B_p\oplus J_{X_p}\to \Z[X_p]\otimes J_{X_p}\to \Fp J_{X_p}\to 0$$
We need to show that $\Fp J_{X_p}\otimes \Fp I_{G/N}$ is $G$-cohomologically trivial.
Tensoring the following $G$-exact sequence by $\Fp I_{G/N}$:
$$0\to \Fp\to \Fp[X_p]\to \Fp J_{X_p}\to 0$$
we obtain
$$0\to \F_pI_{G/N}\to \Fp[X_p]\otimes I_{G/N}\to \Fp J_{X_p}\otimes I_{G/N}\to 0$$
Now $\Fp[X_p]$ is $\Fp[G]$-projective by Lemma~\ref{lem:fpproj}. So $\Fp[Y]\otimes I_{G/N}$ is also 
$\Fp G$-projective and so cohomologically trivial as an $\Fp G$ module.  We have 
already seen that $\F_pI_{G/N}$ is cohomologically trivial as an
$\Fp G$-module. This shows that $(\Fp J_{X_p})\otimes (\Fp I_{G/N})$
is cohomologically trivial as an $\Fp G$-module and hence also as a $G$-module as $\Fp G$ is cohomologically trivial as a $G$-module.

Note that we have constructed an exact sequence of the required
form for $B_p\oplus J_{X_p}$.  But since $B_p$ is $G$-stably 
permutation, the fact that $B_p\oplus J_{X_p}\sim \Ind^G_N\Res^G_N(B_Y\oplus J_y)
=\Ind^G_N\Res^G_N(B_p)\oplus \Ind^G_N\Res^G_N(J_{X_p})$ shows that $J_{X_p}\sim
\Ind^G_N\Res^G_N(J_{X_p})$ since $\Ind^G_N\Res^G_N$ preserve stably permutation
lattices.

For $G=S_p$, $p$ an odd prime, and $N=N_{S_p}(C_p)=C_p\rtimes C_{p-1}$, we see that
$\Res^G_N(J_{X_p})=J_{N/C_{p-1}}$ and 
so $J_{X_p}\sim \Ind^{S_p}_{N}J_{N/C_{p-1}}$ as $S_p$-lattices.

For $G=A_5, G\cap S_4=A_4$, $N=N_G(C_5)=D_{10}$ and $N\cap S_4=C_2$,
so 
$$J_{X_5}=J_{A_5/A_4}\sim \Ind^{A_5}_{D_{10}}(J_{D_{10}/C_2})$$
as $A_5$-lattices as required.
\end{proof}

\begin{prop} For an odd prime $p$ and the transitive $S_p$ set $X_p$ of size $p$,
a flasque resolution of $J_{X_p}$ is given
by 
$$0\to J_{X_p}\to \Z[S_p/S_{p-2}]\to J_{X_p}^{\otimes 2}\to 0$$
In fact, the $S_p$-lattice $J_{X_p}^{\otimes 2}$ is invertible.
\end{prop}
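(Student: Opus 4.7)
The exact sequence would arise by tensoring the $\Z$-split augmentation sequence $0 \to \Z \to \Z[X_p] \to J_{X_p} \to 0$ by the $\Z$-free lattice $J_{X_p}$. The middle term is identified as $\Z[X_p] \otimes J_{X_p} \cong \Z[S_p/S_{p-2}]$ by dualizing the Bessenrodt-Lebruyn isomorphism $I_{X_p} \otimes \Z[X_p] \cong \Z[S_p/S_{p-2}]$ proved earlier in this section and invoking self-duality of permutation lattices. This immediately yields the claimed sequence; what remains is to prove the stronger assertion that $J_{X_p}^{\otimes 2}$ is $S_p$-invertible, which \emph{a fortiori} makes it flasque and turns the sequence into a flasque resolution.

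Invertibility would be verified by showing the restriction of $J_{X_p}^{\otimes 2}$ to every Sylow subgroup of $S_p$ is a permutation lattice and then invoking the standard criterion that invertibility of a $G$-lattice can be detected on Sylow subgroups. For a Sylow $q$-subgroup $Q$ with $q \neq p$, the $Q$-orbits on $X_p$ have $q$-power sizes summing to the prime $p$, forcing a fixed point $x_0 \in X_p$. Changing basis so that $\sum_{x \in X_p} x$ replaces $x_0$ identifies the kernel of $\Z[X_p] \to J_{X_p}$ with a $\Z$-summand and realizes $J_{X_p}|_Q \cong \Z[X_p \setminus \{x_0\}]$ as a permutation $Q$-lattice; consequently $J_{X_p}^{\otimes 2}|_Q$ is again permutation.

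The Sylow $p$-case is the heart of the argument. Since $C_p$ acts regularly on $X_p$, we have $\Z[X_p]|_{C_p} \cong \Z[C_p]$ and $J_{X_p}|_{C_p} \cong J_{C_p}$. For cyclic groups of prime order, $J_{C_p}$ and $I_{C_p}$ are isomorphic as $\Z C_p$-lattices (both realize $\Z[\zeta_p]$ as a principal rank-one $\Z[\zeta_p]$-module, with the isomorphism $I_{C_p} \to J_{C_p}$ induced by the twist $\zeta_p^k \mapsto \zeta_p^{-k}$). Hence $J_{X_p}^{\otimes 2}|_{C_p} \cong I_{C_p}^{\otimes 2}$, and it suffices to show this is $C_p$-permutation. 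Rather than attacking this directly, I would restrict the Bessenrodt-Lebruyn identity $B_p \oplus \Z[X_p] \cong \Z \oplus \Z[S_p/S_{p-2}]$ (with $B_p = I_{X_p} \otimes J_{X_p}$) to $C_p$. Using $\Z[X_p]|_{C_p} \cong \Z[C_p]$ and computing that the $C_p$-orbits on $S_p/S_{p-2}$ (ordered pairs of distinct elements under the diagonal shift) are parametrized by the $p-1$ nonzero differences modulo $p$, one obtains $\Z[S_p/S_{p-2}]|_{C_p} \cong \Z[C_p]^{p-1}$, so
\[
B_p|_{C_p} \oplus \Z[C_p] \cong \Z \oplus \Z[C_p]^{p-1}.
\]
Krull-Schmidt for $\Z C_p$-lattices (with $\Z[C_p]$ indecomposable) yields $B_p|_{C_p} \cong \Z \oplus \Z[C_p]^{p-2}$, and since $B_p|_{C_p} = I_{X_p}|_{C_p} \otimes J_{X_p}|_{C_p} = I_{C_p}^{\otimes 2}$, the desired permutation structure follows.

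The principal obstacle is the Sylow $p$-case, where one must both identify $J_{C_p}$ with $I_{C_p}$ and bootstrap off the already-established Bessenrodt-Lebruyn identity rather than compute $I_{C_p}^{\otimes 2}$ directly from scratch. Once permutation structure is in place on every Sylow subgroup, invertibility of $J_{X_p}^{\otimes 2}$ over $S_p$ follows from the Sylow criterion, flasqueness is automatic from invertibility, and the proposition's full conclusion is reached.
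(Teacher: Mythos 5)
Your construction of the exact sequence and your treatment of the Sylow $q$-subgroups for $q\ne p$ coincide with the paper's proof: both tensor $0\to\Z\to\Z[X_p]\to J_{X_p}\to 0$ by $J_{X_p}$, identify the middle term as $\Z[S_p/S_{p-2}]$ by dualizing the Bessenrodt--Le Bruyn lemma, and use a fixed point of $Q$ on $X_p$ to see that the restriction to $Q$ is permutation. You genuinely diverge at the Sylow $p$-subgroup. The paper invokes the Endo--Miyata theorem that flasque lattices over cyclic $p$-groups are invertible, so it only needs coflasqueness of the dual, which it gets from the vanishing $H^1(C_p,(I_{C_p})^{\otimes 2})=0$ read off the dualized sequence $0\to (I_{C_p})^{\otimes 2}\to \Z[C_p]^{p-1}\to I_{C_p}\to 0$. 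You instead prove the stronger claim that $J_{X_p}^{\otimes 2}|_{C_p}$ is an honest permutation lattice $\Z\oplus\Z[C_p]^{p-2}$, by restricting $B_p\oplus\Z[X_p]\cong\Z\oplus\Z[S_p/S_{p-2}]$ to $C_p$ and cancelling; your appeal to detecting invertibility on Sylow subgroups is consistent with what the paper itself does in the $B_p$ lemma (and is covered by the criterion of \cite{CW90}, which the paper cites elsewhere). Your identification $J_{C_p}\cong I_{C_p}$ is correct, and in fact simpler than the twist you describe: both are principal as $\Z[\zeta_p]$-modules, since the different of $\Z[\zeta_p]$ is the principal ideal $(1-\zeta_p)^{p-2}$.

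The one genuine flaw is the phrase ``Krull--Schmidt for $\Z C_p$-lattices'': Krull--Schmidt \emph{fails} for $\Z C_p$-lattices. By the Diederichsen--Reiner classification (see \cite{CR90}), an indecomposable $\Z C_p$-lattice is $\Z$, a fractional ideal of $\Z[\zeta_p]$, or a nonsplit extension of $\Z$ by such an ideal, and the isomorphism class of a direct sum is governed by a single Steinitz ideal class; as soon as the class number of $\Q(\zeta_p)$ exceeds $1$ (first at $p=23$) one has non-isomorphic decompositions, e.g.\ $\mathfrak a\oplus\mathfrak a'\cong\Z[\zeta_p]\oplus\Z[\zeta_p]$ with $\mathfrak a$ non-principal and $\mathfrak a'$ in the inverse class. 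Your conclusion nevertheless survives: cancelling the free summand $\Z[C_p]$ from $B_p|_{C_p}\oplus\Z[C_p]\cong\Z\oplus\Z[C_p]^{p-1}$ is legitimate by Jacobinski's cancellation theorem \cite[\S 51]{CR87}, whose Eichler hypothesis holds because $\Q C_p$ is commutative (and $\Z[C_p]$ is locally a summand of $B_p|_{C_p}$ since $p\ge 3$), or directly from the Steinitz-invariant classification just mentioned. Alternatively, and most cheaply, carry out your computation over $\Z_p$ or with $\Fp$-coefficients -- where Krull--Schmidt \emph{does} hold -- which is all that the Sylow criterion for invertibility requires. With the citation repaired in one of these ways, your argument is complete; note only that it establishes more than the proposition asks (an integral permutation structure at $C_p$ rather than mere invertibility), which is exactly the extra cost the paper's Endo--Miyata route avoids.
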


\begin{proof}
This is well-known and was proven in~\cite{BL91} using somewhat different
language.
As we require this result, we give a quick self-contained proof.
Tensoring the $S_p$-exact sequence 
$$0\to \Z\to \Z[X_p]\to J_{X_p}\to 0$$
by $J_{X_p}$ and noting that $J_{X_p}\otimes \Z[X_p]\cong (I_{X_p}\otimes \Z[X_p])^*\cong \Z[S_p/S_{p-2}]$ by *,
we obtain the $S_p$-exact sequence of the statement.

It suffices to show that $J_{X_p}^{\otimes 2}$ is invertible when restricted to Sylow $q$-subgroups of $S_p$. Note that  any Sylow $q$ subgroup $Q$ of $S_p$
for $q\ne p$, must fix $\e_i$ for some $i=1,\dots,p$, where
$\Z[X_p]$ has $\Z$-basis $\e_i,i=1,\dots,p$.  Then $\Res^{S_p}_Q(I_{X_p})$
permutation $Q$-lattice with $\Z$-basis
$\e_i-\e_j, j\ne i$.  So its dual $J_{X_p}$ and $J_{X_p}^{\otimes 2}$
must also be $Q$-permutation  lattices.
As for the Sylow $p$-subgroup $C_p$, a result of Endo-Miyata shows that 
for cyclic $p$-groups, every flasque lattice is invertible.
Since $(J_{X_p}^{\otimes 2})^*=I_{X_p}^{\otimes 2}$, we need only 
check that $\Res^{S_p}_{C_p}I_{X_p}^{\otimes 2}=I_{C_p}^{\otimes 2}$ is coflasque.
Dualising the exact sequence of the statement and restricting to $C_p$, we obtain the $C_p$-exact sequence
$$0\to (I_{C_p})^{\otimes 2}\to \Z[C_p]^{p-1}\to I_{C_p}\to 0$$
But then since $(I_{C_p})^{C_p}=0$, and $H^1(C_p,\Z[C_p]^{p-1})=0$,
we see that $H^1(C_p,(I_{C_p})^{\otimes 2})=0$ as required.
\end{proof}

Recall the following useful lemma from Florence~\cite{Flo1}.  The original was
stated for lattices for a profinite group.  The proof for $G$-lattices
follows immediately.

\begin{lemma} Let $A_i,B_i,C_i, i=1,2$ be $G$-lattices fitting into 
two exact sequences 
$$0\to A_i\stackrel{j_i}{\to}B_i\stackrel{\pi_i}{\to}C_i\to 0$$
Assume we are given ax $G$ module map $s_i:C_i\to B_i$, and $d_1,d_2$ two coprime integers, such that $\pi\circ s_i=d_i\id$, $i=1,2$. Let  $A_3=A_1\otimes A_2$,
$$B_3=(B_1\otimes B_2)\oplus (C_1\otimes C_2),\qquad
C_3=(C_1\otimes B_2) \oplus (B_1\otimes C_2).$$
Then there is an exact sequence
$$0\to A_3\stackrel{j_3}{\to} B_3\stackrel{\pi_3}{\to} C_3\to 0$$
together with a $G$ module map $s_3:C_3\to B_3$ such that 
$\pi_3\circ s_3=d_1d_2\id$.
\end{lemma}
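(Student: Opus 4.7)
The plan is to construct $j_3$, $\pi_3$, and $s_3$ explicitly from tensor products of the given data, together with a Bezout-type splitting.

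First I would form the natural four-term exact sequence obtained by tensoring the two given short exact sequences. Since every $G$-lattice is $\mathbb{Z}$-flat, the map $\tau: B_1 \otimes B_2 \to C_3$ defined by $\tau(x) = ((\pi_1 \otimes 1)(x), (1 \otimes \pi_2)(x))$ has kernel $A_1 \otimes A_2$, while the ``boundary'' $\partial: C_3 \to C_1 \otimes C_2$ given by $\partial(u, v) = (1 \otimes \pi_2)(u) - (\pi_1 \otimes 1)(v)$ satisfies $\partial \tau = 0$ and yields an exact sequence
\[0 \to A_1 \otimes A_2 \to B_1 \otimes B_2 \xrightarrow{\tau} C_3 \xrightarrow{\partial} C_1 \otimes C_2 \to 0.\]
Since $\gcd(d_1, d_2) = 1$, Bezout supplies integers $m, n$ with $m d_2 - n d_1 = 1$, and the $G$-equivariant map $\sigma: C_1 \otimes C_2 \to C_3$, $\sigma(c_1 \otimes c_2) = (m c_1 \otimes s_2(c_2),\, n s_1(c_1) \otimes c_2)$, is then a section of $\partial$.

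Setting $B_3 = (B_1 \otimes B_2) \oplus (C_1 \otimes C_2)$, $j_3(a_1 \otimes a_2) = (j_1(a_1) \otimes j_2(a_2), 0)$, and $\pi_3(x, y) = \tau(x) + \sigma(y)$, exactness of $0 \to A_1 \otimes A_2 \to B_3 \to C_3 \to 0$ follows by a quick diagram chase: $\pi_3$ is surjective because $(u, v) - \sigma(\partial(u, v))$ lies in $\ker \partial = \mathrm{im}\, \tau$, and applying $\partial$ to any element of $\ker \pi_3$ forces its $C_1 \otimes C_2$ component to vanish, reducing the kernel to $\ker \tau = A_1 \otimes A_2$.

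To build $s_3$ I would introduce the $G$-module maps $\psi_i: B_i \to A_i$ defined by $j_i \psi_i = d_i\, \id_{B_i} - s_i \pi_i$, and set
\begin{align*}
s_3(u, v) &= \Big( {-}n d_1 d_2\, (s_1 \otimes 1)(u) + m d_1 d_2\, (1 \otimes s_2)(v) \\
&\qquad {}+ m d_2\, (s_1 \otimes j_2 \psi_2)(u) - n d_1\, (j_1 \psi_1 \otimes s_2)(v),\ d_1 d_2\, \partial(u, v) \Big).
\end{align*}
Expanding $\pi_3 s_3$ using the tensor action of $\tau$ on the first component and $\sigma$ on the second, the identities $\pi_i s_i = d_i\,\id$, $\pi_i j_i = 0$, $j_i \psi_i + s_i \pi_i = d_i\,\id_{B_i}$, and the Bezout relation $m d_2 - n d_1 = 1$ combine to give $\pi_3 s_3 = d_1 d_2\, \id_{C_3}$. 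The main obstacle is discovering this formula: a naive guess using only $s_1 \otimes 1$ and $1 \otimes s_2$ leaves residual terms $(\pi_1 \otimes s_2)(v)$ and $(s_1 \otimes \pi_2)(u)$ after $\tau$ is applied, which must be absorbed by $\sigma$ of the second component; the correction terms built from $j_i \psi_i$, with precisely the right Bezout coefficients, are what make this cancellation work integrally.
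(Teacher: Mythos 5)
Your proof is correct: I checked that the four-term tensor complex $0\to A_1\otimes A_2\to B_1\otimes B_2\xrightarrow{\tau} C_3\xrightarrow{\partial} C_1\otimes C_2\to 0$ is exact (using that the $C_i$ are $\Z$-free, so the sequences split over $\Z$), that $\sigma$ is a genuine $G$-equivariant section of $\partial$ via $md_2-nd_1=1$, that $\psi_i$ is well defined since $d_i\,\id_{B_i}-s_i\pi_i$ lands in $\ker\pi_i=\mathrm{im}\,j_i$, and that your formula for $s_3$ does give $\pi_3 s_3=d_1d_2\,\id$ on both components -- the Bezout relation collapses $-nd_1^2d_2\,u+md_1d_2^2\,u$ to $d_1d_2\,u$ (and likewise for $v$), while the residual $(1\otimes s_2\pi_2)(u)$, $(\pi_1\otimes s_2)(v)$, $(s_1\otimes \pi_2)(u)$ and $(s_1\pi_1\otimes 1)(v)$ terms cancel exactly against $\sigma\bigl(d_1d_2\,\partial(u,v)\bigr)$. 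The paper itself supplies no proof, only the citation to Florence with the remark that the profinite argument transfers verbatim, and your route (splicing the tensor complex with a Bezout section) is essentially Florence's construction, so this is the same approach with the details -- in particular the explicit multiplication-by-$d_1d_2$ splitting $s_3$ -- fully worked out.
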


\begin{remark} Observe that this lemma is very handy for showing that 
in certain circumstances, the tensor product of two quasi-permutation lattices
is again quasi-permutation.  Indeed, with the hypotheses of the Lemma
and the additional assumption that $B_i,C_i,i=1,2$ are all permutation lattices,
then $B_3,C_3$ are also permutation, since the tensor product of permutation
lattices is permutation and the direct sum of permutation lattices is 
permutation.  Indeed, in Theorem 2.2 of the same paper, he 
shows that the tensor product of augmentation ideals of $G$ sets of 
pairwise relatively prime order is quasipermutation as a consequence of this
Lemma.  He then goes on to give a simple proof of Klyachko's result that 
a $k$-torus with character lattice isomorphic to the tensor product 
of two augmentation ideals for $G$ sets of relatively 
prime order is rational.

We will apply this Lemma to prove that the $A_5\times C_2$ lattice 
$J_{X_5}\otimes \Z^-_{A_5}$ is quasi-permutation.
\end{remark}

\begin{remark}
Suppose 
$$0\to A\to B\stackrel{\pi}{\to} C\to 0$$
is an exact sequence of $G$-lattices and $s:C\to B$ is a $G$-equivariant map
such that $\pi\circ s=n\id$.
Let $D$ be another $G$-lattice.
Then for the $G$-exact sequence 
$$0\to (D\otimes A)\to (D\otimes B)\stackrel{\hat{\pi}}{\to} (D\otimes C)\to 0$$
where $\hat{\pi}=(\id_D\otimes \pi)$,
there exists a $G$-equivariant map 
$$\hat{s}=(\id_D\otimes s): (D\otimes C)\to 
(D\otimes B)$$
such that $\hat{\pi}\circ \hat{s}=n\id$.

In particular, this remark applies to the natural exact sequence for the $G$-lattice $J_{X_n}$ where $X_n$ is a $G$-set of size $n$.  
Then for the $G$-exact sequence 
$$0\to \Z\to \Z[X_n]\stackrel{\pi}{\to} J_{X_n}\to 0$$
there is a natural $G$-equivariant map $s:J_{X_n}\to \Z[X_n]$
given by $s(\pi(x))=nx-\sum_{y\in X}y$ which satisfies $\pi\circ s=n\id$.
Tensoring this sequence with $J_{X_n}$,
we obtain a $G$-exact sequence
$$0\to J_{X_n}\to J_{X_n}\otimes \Z[X_n]\stackrel{\hat{\pi}}\to J_{X_n}^{\otimes 2}\to 0$$
For this sequence, there exists a $G$-equivariant map $\hat{s}:J_{X_n}^{\otimes 2}\to J_{X_n}\otimes \Z[X_n]$ such that $\hat{\pi}\circ \hat{s}=n\id$.
\end{remark}

Recall that:

\begin{theorem}~\cite{Leb95,Flo2} Let $K/k$ be a separable extension of prime degree $p$, $p\ge 5$ with Galois closure $L/k$ having Galois group $\Gal(L/k)=C_p\rtimes C_{p-1}$
and $H=\Gal(L/K)=C_{p-1}$.  Then $R^{(1)}_{K/k}(\Gm)$ is not a stably 
rational variety.
\end{theorem}

Note that this is equivalent to $J_{X_p}$ is not stably permutation as an $C_p\rtimes C_{p-1}$ lattice if $p\ge 5$ is prime.  This applies in particular to the $F_{20}=C_5\rtimes C_4$-lattice $J_{X_5}$.

\begin{prop}
For a prime $p\ge 5$, the $S_p\times C_2$-lattice $J_{X_p}\otimes \Z^-_{S_p}$ is $S_p\times C_2$-quasi-invertible but not $S_p\times C_2$-quasipermutation.

The $A_5\times C_2$-lattice $ J_{X_5}\otimes \Z^-_{A_5}$  is
$A_5\times C_2$-quasipermutation.
\end{prop}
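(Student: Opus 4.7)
The unifying strategy is Florence's Lemma (cited above). In both parts I will pair a short exact sequence for $J_{X_p}$ (as an $S_p$- or $A_5$-lattice, inflated with trivial $C_2$-action) with the standard $(G\times C_2)$-sequence $0\to \Z^-\to \Z[C_2]\to \Z\to 0$ of section-degree $2$; the coprimality $\gcd(p,2)=1$ then activates Florence's Lemma and produces a short exact sequence for $J_{X_p}\otimes\Z^-$. The quality of its end terms --- permutation or merely invertible --- is what distinguishes genuine quasi-permutation from mere quasi-invertibility.

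For part (1), I start from the flasque resolution $0\to J_{X_p}\to \Z[S_p/S_{p-2}]\to J_{X_p}^{\otimes 2}\to 0$ obtained by tensoring $0\to \Z\to \Z[X_p]\to J_{X_p}\to 0$ (with its classical section $\pi(x_i)\mapsto p x_i-\sum_j x_j$ of degree $p$) with $J_{X_p}$. Florence's Lemma then produces $0\to J_{X_p}\otimes \Z^-_{S_p}\to B_3\to C_3\to 0$ with $B_3=\Z[(S_p\times C_2)/S_{p-2}]\oplus J_{X_p}^{\otimes 2}$ and $C_3=\Ind^{S_p\times C_2}_{S_p}J_{X_p}^{\otimes 2}\oplus \Z[S_p/S_{p-2}]$. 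As $J_{X_p}^{\otimes 2}$ is $S_p$-invertible (by the preceding proposition) and both inflation and induction preserve invertibility, $B_3$ and $C_3$ are $(S_p\times C_2)$-invertible. Writing $B_3\oplus B_3'=P$ for a permutation $P$ and direct-summing $\id_{B_3'}$ converts this into a flasque resolution of $J_{X_p}\otimes \Z^-_{S_p}$ with middle $P$ permutation and cokernel $C_3\oplus B_3'$ invertible, which exhibits quasi-invertibility. For the negation of quasi-permutation I restrict to $N=N_{S_p}(C_p)\le S_p$: the sign factor trivialises on $N$, so the restriction is simply $J_{N/C_{p-1}}$, and by the Lebruyn--Florence theorem just cited this is not $N$-quasi-permutation for $p\ge 5$. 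Since restriction preserves quasi-permutation, the contradiction follows.

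For part (2), the plan is to upgrade Florence's output to permutation ends by invoking the already-proven fact that $J_{X_5}$ is $A_5$-quasi-permutation (the Corollary just above). This forces the flasque class of $J_{X_5}$ to vanish, and since that class is represented by $J_{X_5}^{\otimes 2}$ in the natural flasque resolution $0\to J_{X_5}\to \Z[A_5/A_3]\to J_{X_5}^{\otimes 2}\to 0$, we deduce that $J_{X_5}^{\otimes 2}$ is $A_5$-stably permutation: $J_{X_5}^{\otimes 2}\oplus P_1\cong P_2$ for some permutation $A_5$-lattices $P_1,P_2$. Direct-summing $\id_{P_1}$ into the natural resolution then produces a genuine quasi-permutation resolution $0\to J_{X_5}\to \Z[A_5/A_3]\oplus P_1\to P_2\to 0$ whose equivariant section has degree $5$ (extending the degree-$5$ tensored section $s$ by $(a,p)\mapsto (s(a),5p)$). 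Applying Florence's Lemma with this and $0\to \Z^-_{A_5}\to \Z[C_2]\to \Z\to 0$ --- coprime degrees $5$ and $2$ --- now yields $0\to J_{X_5}\otimes \Z^-_{A_5}\to B_3\to C_3\to 0$ in which $B_3$ and $C_3$ are direct sums of tensor products of $A_5$-permutation and $C_2$-permutation lattices, hence $(A_5\times C_2)$-permutation, giving the desired quasi-permutation conclusion.

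The main obstacle, and the essential dichotomy of the whole argument, is precisely the upgrade from ``$J_{X_p}^{\otimes 2}$ is invertible'' to ``$J_{X_p}^{\otimes 2}$ is stably permutation''. Over $S_p$ for $p\ge 5$ this upgrade genuinely fails --- it would force $J_{X_p}$ itself to be $S_p$-quasi-permutation, contradicting Lebruyn--Florence at $N_{S_p}(C_p)=C_p\rtimes C_{p-1}$. Over $A_5$, however, the smaller normaliser $N_{A_5}(C_5)=D_{10}$ does admit quasi-permutation for $J_{D_{10}/C_2}\cong I_{D_{10}/C_2}\otimes \Z^-_{C_5}$ (Proposition~\ref{prop:d2n}, via Florence on augmentation ideals of coprime orders), and the preceding inductive corollary transfers this all the way up to $J_{X_5}$. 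That transfer is what unlocks the upgraded Florence output and the genuine quasi-permutation conclusion in part (2).
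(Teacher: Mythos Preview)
Your proof is correct and follows essentially the same approach as the paper: Florence's Lemma applied to the flasque resolution $0\to J_{X_p}\to \Z[S_p/S_{p-2}]\to J_{X_p}^{\otimes 2}\to 0$ (with its degree-$p$ section) paired with the degree-$2$ sequence for $\Z^-$, together with the restriction to $N_{S_p}(C_p)$ for the negative statement. The only organisational difference is in part~(2): you upgrade the $A_5$-resolution to one with permutation ends \emph{before} invoking Florence (by direct-summing $\id_{P_1}$ so that $J_{X_5}^{\otimes 2}\oplus P_1\cong P_2$ is permutation), whereas the paper applies Florence first and then observes that the resulting terms $M,N$ are stably permutation because $J_{X_5}^{\otimes 2}$ is; both routes rest on the same key input, namely that $J_{X_5}$ being $A_5$-quasi-permutation forces $J_{X_5}^{\otimes 2}$ to be $A_5$-stably permutation.
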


\begin{proof} Let $p\ge 5$ be a prime.
A flasque resolution for $J_{X_p}$ for the transitive $S_p$-set $X_p$ can be given by
$$0\to J_{X_p}\to J_{X_p}\otimes \Z[X_p]\stackrel{\pi_1}{\to} J_{X_p}^{\otimes 2}\to 0$$
As we have seen, $J_{X_p}\otimes \Z[X_p]$ is permutation as the dual of 
$I_{X_p}\otimes \Z[X_p]$.  We have shown that $J_{X_p}^{\otimes 2}$ is $S_p$-quinvertible.  
We see from the remark that there exists an $S_p$-equivariant map
$$s_1:J_{X_p}^{\otimes 2}\oplus P\to \Z[X_p]\otimes J_{X_p}\oplus P$$
such that $\pi_1\circ s_1=p\id$.
Inflating this sequence from $S_p$ to $S_p\times C_2$ gives us the 
same statements for the $S_p\times C_2$-set $X_p$.
That is, the above sequence can be considered also a flasque resolution for
the $S_p\times C_2$-lattice $J_{X_p}$ with all maps considered above 
$S_p\times C_2$-equivariant. 
We remark that the sign lattice $\Z^{-}_{S_p}$ for $S_p\times C_2$ is in
fact the augmentation ideal $I_{Y_2}$ for $Y_2=(S_p\times C_2)/S_p$
and so its augmentation sequence
$$0\to I_{Y_2}\to \Z[Y_2]\stackrel{\pi_2}{\to}\Z\to 0$$
admits an $S_p\times C_2$-equivariant map $s_2:\Z\to \Z[Y_2]$ such that 
$\pi_2\circ s_2=2\id$.
We may apply Florence's Lemma to the 2 exact sequences above to obtain an
$S_p\times C_2$-exact sequence:
\begin{equation} \label{eq:signedweightseq}
0\to J_{X_p}\otimes I_{Y_2}\to M\to N\to 0
\end{equation}
where 
$$M=J_{X_p}\otimes \Z[X_p]\otimes \Z[Y_2]\oplus J_{X_p}^{\otimes 2}\otimes \Z$$
$$N=J_{X_p}\otimes \Z[X_p]\otimes \Z\oplus J_{X_p}^{\otimes 2}\otimes \Z[Y_2]$$
Since $J_{X_p}\otimes \Z[X_p]$ is permutation and $J_{X_p}^{\otimes 2}$ is 
invertible as $S_p\otimes C_2$-lattices, we see that the same holds for $M$ and $N$.  
We may then find an appropriate $S_p\times C_2$-lattice $L$ such that $M\oplus L\cong P$ is permutation.  Note that $L$ is also invertible.
Then the $S_p\times C_2$-exact sequence
$$0\to J_{X_p}\otimes I_{Y_2}\to P\to N\oplus L\to 0$$
shows that the $S_p\times C_2$-lattice $J_{X_p}\otimes \Z^-_{S_p}=J_{X_p}\otimes I_{Y_2}$ is quasi-invertible as required.

Note though, that the $S_p\times C_2$-lattice $J_{X_p}\otimes \Z^-_{S_p}$ is not quasi-permutation as its restriction to $C_p\rtimes C_{p-1}$ is the lattice 
$J_{X_p}$ which is not quasi-permutation.

Now setting $p=5$, we restrict the above sequence (\ref{eq:signedweightseq}) to $A_5\times C_2$.
We have an $A_5\times C_2$-exact sequence
$$0\to J_{X_5}\otimes I_{Y_2}\to M\to N\to 0$$
where
$$M=J_{X_5}\otimes \Z[X_5]\otimes \Z[Y_2]\oplus J_{X_5}^{\otimes 2}\otimes \Z$$
$$N=J_{X_5}\otimes \Z[X_5]\otimes \Z\oplus J_{X_5}^{\otimes 2}\otimes \Z[Y_2]$$
Since the $A_5$-lattice $J_{X_5}$ is quasi-permutation, its inflation, the $A_5\times C_2$-lattice $J_{X_5}$ is also quasi-permutation.
But then the its flasque lattice, the $A_5\times C_2$-lattice $J_{X_5}^{\otimes 2}$, is $A_5\times C_2$-stably permutation.  This shows that as  $A_5\times C_2$-lattices,  $M$ 
and $N$ are stably permutation.

We may then find an appropriate $A_5\times C_2$-permutation lattice $Q$ such that $M\oplus Q\cong P$ is permutation.  
Then the $A_5\times C_2$-exact sequence
$$0\to J_{X_5}\otimes I_{Y_2}\to P\to N\oplus Q\to O$$
shows that the $A_5\times C_2$-lattice $J_{X_5}\otimes \Z^-_{S_5}=J_{X_5}\otimes I_{Y_2}$ is quasi-permutation as required.
\end{proof}

\begin{lemma} $\Z[F_{20}/D_{10}]\otimes J_{F_{20}/C_4}^{\otimes 2}$ is 
$F_{20}$-stably permutation.  
$\Z[S_5/A_5]\otimes J_{S_5/S_4}^{\otimes 2}$ is $S_5$-stably permutation.  
\end{lemma}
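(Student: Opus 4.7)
The plan is to reduce both statements to facts already in hand: the $D_{10}$-quasipermutation of $J_{D_{10}/C_2}$ (Theorem~1.2 of Endo, applied to $D_{2n}$ with $n=5$ and $H=C_2$) and the $A_5$-quasipermutation of $J_{A_5/A_4}=J_{X_5}$ (the preceding corollary). The common tool is the flasque resolution
$$0\to J_{X_5}\to J_{X_5}\otimes\Z[X_5]\to J_{X_5}^{\otimes 2}\to 0\qquad(\ast)$$
constructed in the preceding proposition. By the Bessenrodt--LeBruyn lemma, the middle term of $(\ast)$ is $\Z[S_5/S_3]$ as an $S_5$-lattice; a Mackey double-coset computation---using that in $\mathrm{AGL}_1(\F_5)\cong F_{20}$ the only affine map fixing two distinct points of $\F_5$ is the identity, so $F_{20}\cap gS_3g^{-1}=1$ for every $g\in S_5$---shows $\mathrm{Res}^{S_5}_{F_{20}}\Z[S_5/S_3]\cong\Z[F_{20}]$. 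Thus the middle term of $(\ast)$ is $G$-permutation both for $G=S_5$ and for $G=F_{20}$.

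Next, let $(G,H)$ denote either $(F_{20},D_{10})$ or $(S_5,A_5)$, so that $|G/H|=2$. I would tensor $(\ast)$ with the $G$-permutation lattice $\Z[G/H]$. Since tensoring a permutation lattice with a permutation lattice yields a permutation lattice, and tensoring an invertible lattice with a permutation lattice yields an invertible lattice, the resulting sequence
$$0\to\Z[G/H]\otimes J_{X_5}\to\Z[G/H]\otimes J_{X_5}\otimes\Z[X_5]\to\Z[G/H]\otimes J_{X_5}^{\otimes 2}\to 0$$
is a flasque resolution of the $G$-lattice $\Z[G/H]\otimes J_{X_5}$ (using that $J_{X_5}^{\otimes 2}$ is $S_5$-invertible by the preceding proposition, hence $G$-invertible by restriction). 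Its flasque class satisfies $\rho_G(\Z[G/H]\otimes J_{X_5})=[\Z[G/H]\otimes J_{X_5}^{\otimes 2}]$, so $\Z[G/H]\otimes J_{X_5}^{\otimes 2}$ is $G$-stably permutation if and only if $\Z[G/H]\otimes J_{X_5}$ is $G$-quasipermutation.

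Finally, I would use the standard identification $\Z[G/H]\otimes M\cong\Ind^G_H\Res^G_H M$ together with the fact that $D_{10}$ (respectively $A_5$) acts transitively on $X_5$ with point stabilizer $C_2$ (respectively $A_4$) to conclude
$$\Z[G/H]\otimes J_{X_5}\cong\Ind^{F_{20}}_{D_{10}}J_{D_{10}/C_2}\quad\text{or}\quad\Ind^{S_5}_{A_5}J_{A_5/A_4}.$$
Induction $\Ind^G_H$ is exact and sends $H$-permutation lattices to $G$-permutation ones, so it preserves quasipermutation; since $J_{D_{10}/C_2}$ is $D_{10}$-quasipermutation (Theorem~1.2) and $J_{A_5/A_4}$ is $A_5$-quasipermutation (the preceding corollary), both induced lattices are $G$-quasipermutation, whence $\Z[G/H]\otimes J_{X_5}^{\otimes 2}$ is $G$-stably permutation.

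The main obstacle is bookkeeping rather than new mathematical input: one must carefully verify the Mackey identification $\mathrm{Res}^{S_5}_{F_{20}}\Z[S_5/S_3]\cong\Z[F_{20}]$, confirm that the flasque-class formulation faithfully converts the stably-permutation question on $G$ into the quasipermutation question on $H$ via $\Ind^G_H$, and note that the heavy lifting---the quasipermutation facts over $D_{10}$ and $A_5$---has already been carried out elsewhere in the paper.
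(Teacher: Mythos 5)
Your proposal is correct and is essentially the paper's own argument: both rest on the projection formula $\Z[G/H]\otimes M\cong \Ind^G_H\Res^G_H(M)$, the flasque resolution $0\to J_{X_5}\to J_{X_5}\otimes \Z[X_5]\to J_{X_5}^{\otimes 2}\to 0$, and the quasi-permutation of $J_{D_{10}/C_2}$ and $J_{A_5/A_4}$; the paper simply restricts the resolution to $H\in\{D_{10},A_5\}$ to conclude that $\Res^G_H J_{X_5}^{\otimes 2}$ is $H$-stably permutation and then induces up, whereas you tensor the resolution with $\Z[G/H]$ and read off the $G$-flasque class of $\Ind^G_H\Res^G_H J_{X_5}$ --- the same computation in a different order. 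Note that your Mackey identification $\Res^{S_5}_{F_{20}}\Z[S_5/S_3]\cong \Z[F_{20}]$, while correct, is superfluous: restrictions and tensor products of permutation lattices are automatically permutation, which is all either argument needs.
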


\begin{proof}
For any $G$-lattice $M$ and a subgroup $H$ of $G$,
$$\Z[G/H]\otimes M\cong \Ind^G_H\Res^G_H(M)$$
Since we have seen that 
$$0\to J_{X_5}\to J_{X_5}\otimes \Z[X_5]\to J_{X_5}^{\otimes 2}\to 0$$
is a flasque resolution for the $S_5$-lattice $J_{X_5}$,
it is also a flasque resolution for its restrictions to any subgroup.
Note that $J_{X_5}\cong J_{S_5/S_4}$ as an $S_5$-lattice,
$J_{X_5}\cong J_{A_5/A_4}$ as an $A_5$-lattice,
$J_{X_5}\cong J_{F_{20}/C_4}$ as an $F_{20}$-lattice and 
$J_{X_5}\cong J_{D_{10}/C_2}$ as an $D_{10}$-lattice.
Since the $D_{10}$ lattice $J_{X_5}\cong J_{D_{10}/C_2}$ is
quasi-permutation, $J_{X_5}^{\otimes 2}$ must be stably permutation as  $D_{10}$-lattice.
Then 
$$\Z[F_{20}/D_{10}]\otimes J_{X_5}^{\otimes 2}\cong \Ind^{F_{20}}_{D_{10}}
\Res^{F_{20}}_{D_{10}}J_{X_5}$$
shows that $\Z[F_{20}/D_{10}]\otimes J_{X_5}^{\otimes 2}$
is $F_{20}$-stably permutation.
Similarly, since the 
$A_{5}$-lattice $J_{X_5}\cong J_{A_{5}/A_4}$ is
quasi-permutation, $J_{X_5}^{\otimes 2}$ must be stably permutation as 
an  $A_{5}$-lattice.
Then 
$$\Z[S_5/A_5]\otimes J_{X_5}^{\otimes 2}\cong \Ind^{S_5}_{A_5}
\Res^{S_5}_{A_5}J_{X_5}$$
shows that $\Z[S_5/A_5]\otimes J_{X_5}^{\otimes 2}$
is $S_5$-stably permutation.
\end{proof}

\begin{prop} For the $S_5$-lattices $J_{X_5}$ and 
$J_{X_5}\otimes \Z^-_{A_5}$, we have 
$$\rho_{S_5}(J_{X_5}\otimes \Z^-_{A_5})=-\rho_{S_5}(J_{X_5})\ne 0$$
The GAP IDs of the corresponding groups are $[4,31,4,2]$ and $[4,31,5,2]$.

For the $F_{20}$-lattices $J_{X_5}$ and $J_{X_5}\otimes \Z^-_{D_{10}}$,
$$\rho_{F_{20}}(J_{X_5}\otimes \Z^-_{D_{10}})=-\rho_{F_{20}}(J_{X_5})\ne 0$$
The GAP IDs of the corresponding groups are $[4,31,1,3]$ and $[4,31,1,4]$.
\end{prop}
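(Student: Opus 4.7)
The plan is to derive both identities from additivity of the flasque class on short exact sequences, applied to an augmentation-type sequence tensored with $J_{X_5}$. For the $S_5$ case, let $Y_2=S_5/A_5$ so that $\Z^-_{A_5}$ is naturally identified with $I_{Y_2}$; tensoring the augmentation sequence $0\to I_{Y_2}\to \Z[Y_2]\to \Z\to 0$ with $J_{X_5}$ yields the $S_5$-exact sequence
$$0\to J_{X_5}\otimes \Z^-_{A_5}\to J_{X_5}\otimes \Z[Y_2]\to J_{X_5}\to 0.$$
By the projection formula the middle term equals $\Ind^{S_5}_{A_5}\Res^{S_5}_{A_5}J_{X_5}\cong \Ind^{S_5}_{A_5}J_{A_5/A_4}$. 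The corollary proved above shows that $J_{A_5/A_4}$ is $A_5$-quasi-permutation, and since $\Ind^{S_5}_{A_5}$ is exact and carries $A_5$-permutation lattices to $S_5$-permutation lattices, the middle term is $S_5$-quasi-permutation, so $\rho_{S_5}(J_{X_5}\otimes \Z[Y_2])=0$.

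Applying additivity of $\rho_G$ on a short exact sequence $0\to A\to B\to C\to 0$ of $G$-lattices (a consequence of the horseshoe construction of flasque resolutions together with the fact that extensions of flasque lattices are flasque and that their classes add in the Grothendieck-type group of flasque classes modulo permutation lattices) I conclude
$$0=\rho_{S_5}(J_{X_5}\otimes \Z^-_{A_5})+\rho_{S_5}(J_{X_5}),$$
which is the first identity. The $F_{20}$ case is handled identically: using $\Z^-_{D_{10}}\cong I_{F_{20}/D_{10}}$, tensoring with $J_{X_5}$ produces a sequence whose middle term is $\Ind^{F_{20}}_{D_{10}}J_{D_{10}/C_2}$, and $J_{D_{10}/C_2}$ is $D_{10}$-quasi-permutation by Endo's theorem applied to $G=D_{2n}$ with $n=5$ odd and $H=C_2$, hence the middle term is $F_{20}$-quasi-permutation and the analogous identity follows.

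For the non-vanishing, the theorem of Lebedev and Florence cited above states that the $F_{20}$-lattice $J_{X_5}\cong J_{F_{20}/C_4}$ is not quasi-permutation, giving $\rho_{F_{20}}(J_{X_5})\ne 0$; since quasi-permutation is preserved under restriction to subgroups and $F_{20}\le S_5$, this also forces $\rho_{S_5}(J_{X_5})\ne 0$. The identification of the four GAP IDs is a routine computer verification: using the explicit representations $\rho_4$ and $\rho_4^-$ introduced earlier, the images of $S_5$ (respectively $F_{20}$) and of their twists by the sign characters $S_5\to\{\pm 1\}$ with kernel $A_5$ (respectively $F_{20}\to\{\pm 1\}$ with kernel $D_{10}$) give four matrix subgroups of $\GL_4(\Z)$ whose \textup{CrystCatZClass} values match the listed entries. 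The main obstacle in the argument is the additivity of $\rho_G$ on short exact sequences, but this is standard; once it is in hand, both identities fall out of the same tensor-product trick combined with the previously established quasi-permutation results for $J_{X_5}$ over $A_5$ and $J_{D_{10}/C_2}$ over $D_{10}$.
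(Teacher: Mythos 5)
There is a genuine gap at the heart of your argument: the ``additivity of $\rho_G$ on short exact sequences'' that you invoke is false for general short exact sequences of $G$-lattices, and the sequence you apply it to has exactly the shape where it fails. A counterexample lives inside this very paper: restricting $0\to \Z\to \Z[X_5]\to J_{X_5}\to 0$ to $F_{20}$ gives a sequence whose middle term is permutation (a fortiori quasi-permutation), yet $\rho_{F_{20}}(J_{X_5})\ne 0$ while $-\rho_{F_{20}}(\Z)=0$; the classical case $G=C_2\times C_2$ with $0\to\Z\to\Z[G]\to J_{G/1}\to 0$ (the non--stably-rational biquadratic norm one torus) shows the same failure. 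The flasque class is additive on direct sums, and it is stable along sequences whose \emph{sub} or \emph{quotient} term is permutation (the pullback trick of Lemma~\ref{lem:flasque}), but in your sequence $0\to J_{X_5}\otimes\Z^-_{A_5}\to J_{X_5}\otimes\Z[Y_2]\to J_{X_5}\to 0$ the quotient $J_{X_5}$ is neither permutation nor invertible, so no valid special case applies; the extension class in $\Ext^1_{S_5}(J_{X_5},\,J_{X_5}\otimes\Z^-_{A_5})$ is precisely the obstruction you cannot wave away. Knowing that the middle term is quasi-permutation (which you establish correctly via the projection formula and the $A_5$-quasi-permutation result for $J_{X_5}$) therefore yields no relation at all between $\rho_{S_5}(J_{X_5}\otimes\Z^-_{A_5})$ and $\rho_{S_5}(J_{X_5})$.

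This failure is exactly why the paper routes the computation through Florence's Lemma rather than through a single tensored augmentation sequence. One needs the two coprime quasi-sections --- $s_1$ with $\pi_1\circ s_1=5\,\id$ on the flasque resolution $0\to J_{X_5}\to J_{X_5}\otimes\Z[X_5]\to J_{X_5}^{\otimes 2}\to 0$, and $s_2$ with $\pi_2\circ s_2=2\,\id$ on $0\to I_{S_5/A_5}\to\Z[S_5/A_5]\to\Z\to 0$ --- to manufacture a \emph{new} exact sequence $0\to J_{X_5}\otimes I_{S_5/A_5}\to M\to Q\to 0$ in which, after adding a lattice $L$ with $J_{X_5}^{\otimes 2}\oplus L$ permutation, the middle term becomes permutation and the quotient becomes $Q\oplus L$ with $Q$ stably permutation (using that $\Z[S_5/A_5]\otimes J_{X_5}^{\otimes 2}$ is $S_5$-stably permutation); this gives $\rho_{S_5}(J_{X_5}\otimes\Z^-_{A_5})=[L]=-[J_{X_5}^{\otimes 2}]=-\rho_{S_5}(J_{X_5})$ honestly, and the $F_{20}$ identity follows by restriction, with $S_5/A_5$ restricting to $F_{20}/D_{10}$. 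Your peripheral steps --- the identification $J_{X_5}\otimes\Z[Y_2]\cong\Ind^{S_5}_{A_5}\Res^{S_5}_{A_5}J_{X_5}$, the nonvanishing of $\rho_{F_{20}}(J_{X_5})$ and hence of $\rho_{S_5}(J_{X_5})$ via restriction, and the GAP ID identifications --- all agree with the paper, but without a valid replacement for the false additivity claim the two displayed identities remain unproven, and the same defect afflicts your $F_{20}$ argument verbatim.
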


\begin{proof}
From the remark, we have observed that the $S_5$-flasque resolution
for $J_{X_5}$ given by 
$$0\to J_{X_5}\to J_{X_5}\otimes \Z[X_5]\stackrel{\pi_1}{\to} J_X^{\otimes 2}\to 0$$
admits an equivariant map $s_1:J_{X_5}^{\otimes 2}\to J_{X_5}\otimes \Z[X_5]$
such that $\pi_1\circ s_1=5\id$.
For the second sequence, we will take instead the $S_5$-set $S_5/A_5$,
and the quasi-permutation resolution for $I_{S_5/A_5}=\Z^-_{A_5}$
given by 
$$0\to I_{S_5/A_5}\to \Z[S_5/A_5]\stackrel{\pi_2}{\to} \Z\to 0$$
which admits an equivariant map $s_2:\Z\to \Z[S_5/A_5]$ such that 
$\pi_2\circ s_2=2\id$.
So the 2 sequences satisfy the hypotheses for Florence's Lemma
and we obtain an $S_5$-exact sequence
$$0\to  J_{X_5}\otimes I_{S_5/A_5}\to M\to Q\to 0$$
where 
$$M=J_{X_5}\otimes \Z[X_5]\otimes \Z[S_5/A_5]\oplus J_{X_5}^{\otimes 2}\otimes \Z$$
$$Q=J_{X_5}\otimes \Z[X_5]\otimes \Z\oplus J_{X_5}^{\otimes 2}\otimes \Z[S_5/A_5]$$

Since $J_{X_5}\otimes \Z[X_5]$ is $S_5$-permutation and 
$J_{X_5}^{\otimes 2}\otimes \Z[S_5/A_5]$ is $S_5$-permutation by the Lemma,
we see that $Q$ is permutation.  Since also $J_{X_5}^{\otimes 2}\otimes \Z\cong J_{X_5}^{\otimes 2}$ is $S_5$-invertible,
we see that $M$ is also $S_5$-invertible. In fact if $L$ is an $S_5$-lattice 
such that $J_{X_5}^{\otimes 2}\oplus L$ is $S_5$-permutation, we may
adjust this $S_5$-exact sequence to 
$$0\to J_{X_5}\otimes I_{S_5/A_5}\to M\oplus L\to Q\oplus L\to 0$$
Then $Q\oplus L$ is $S_5$-invertible, $M\oplus L$ is $S_5$-permutation,
and the $S_5$-flasque class of $J_{X_5}\otimes I_{S_5/A_5}$ is $[Q\oplus L]=[L]$
since $Q$ is permutation. But the $S_5$-flasque class of $J_{X_5}$
is $[J_{X_5}^{\otimes 2}]$ and since $J_{X_5}^{\otimes 2}\oplus L$ is permutation, we see that for the $S_5$-lattices $J_{X_5}\otimes \Z^-_{A_5}$
$$\rho_{S_5}(J_{X_5}\otimes \Z^-_{A_5})=-\rho_{S_5}(J_{X_5})$$
as required.  We also know that $\rho_{S_5}(J_{X_5})\ne 0$ as the lattice $J_{X_5}$
restricted to $F_{20}$ is not quasi-permutation and so neither is $J_{X_5}$
as an $S_5$-lattice.

The above argument restricted to the subgroup $F_{20}$ gives the other statements, where we note that $S_5$-set $S_5/A_5$ restricted to $F_{20}$ is 
$F_{20}/D_{10}$.

The GAP ID identifications will be discussed below.
\end{proof}

Note that the Corollary was obtained computationally in~\cite{HY12}.

By results of Hoshi and Yamasaki in~\cite{HY12}, there are 7 conjugacy classes of finite subgroups of $\GL_4(\Z)$ which correspond to 
retract but not stably rational algebraic tori.  It turns out that the DadeGroup(4,7) with GAP ID [4,31,7,2] and corresponding lattice
$(\Lambda(A_4),\Aut(A_4))$ is one such example.  All but one of the other such 
examples can be seen to be subgroups (not just conjugate subgroups) of this group.  Their lattices correspond to restrictions of $\Lambda(A_4)$ to the appropriate subgroup. The justifications are
similar to identifying the lattice corresponding to DadeGroup(4,7) since they correspond to subgroups.  For this reason, they are omitted. 

\begin{prop}
The following subgroups  of \textup{DadeGroup(4,7)} with GAP ID \textup{[4,31,7,2]}
and lattice $$(\Lambda(A_4),\Aut(A_4))=(J_{X_5}\otimes \Z^-_{S_5},S_5\times C_2)$$ have GAP IDs and lattices given by:
\begin{enumerate}
\item \textup{[4,31,1,3]:}  $(J_{X_5},F_{20})$.
\item \textup{[4,31,1,4]:} $(J_{X_5}\otimes \Z^-_{D_{10}},F_{20})$
\item \textup{[4,31,2,2]:} $(J_{X_5}\otimes \Z^-_{F_{20}},F_{20}\times C_2)$.
\item \textup{[4,31,4,2]:} $(J_{X_5},S_5)$.
\item \textup{[4,31,5,2]:} $(J_{X_5}\otimes \Z^-_{A_5},S_5)$.
\end{enumerate}
All of these correspond to retract rational tori which are not stably rational.
\end{prop}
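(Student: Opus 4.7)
The plan is to first pin down each of the five subgroups and identify the associated $G$-lattice as a restriction of $\Lambda(A_4) = J_{X_5} \otimes \Z^-_{S_5}$, and then to deduce retract but not stable rationality from the structural results already established. For the identifications, I would take the GAP generators of each subgroup and match them, via the faithful representation $\rho_4^- \colon S_5 \times C_2 \to \GL_4(\Z)$, to elements of $S_5 \times C_2$, then verify the claimed group-theoretic structure (as in the proofs of the DadeGroup identifications). The main combinatorial point is that there are two non-conjugate embeddings of $F_{20}$ (resp.\ $S_5$) into $S_5 \times C_2$ given by the unique sign character of $F_{20}$ (resp.\ $S_5$): the ``straight'' embedding into the $S_5$-factor and the ``twisted'' diagonal embedding. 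Since $\Z^-_{S_5}\!\vert_{F_{20}} = \Z^-_{F_{20} \cap A_5} = \Z^-_{D_{10}}$, restricting $\Lambda(A_4)$ along the straight $F_{20}$ gives $J_{X_5} \otimes \Z^-_{D_{10}}$ (case [4,31,1,4]), while along the twisted $F_{20}$ the two sign twists cancel and we recover $J_{X_5}$ (case [4,31,1,3]). The same dichotomy distinguishes [4,31,4,2] from [4,31,5,2]. For [4,31,2,2], the subgroup is a ``twisted'' copy of $F_{20} \times C_2$ in $S_5 \times C_2$ chosen so that the restricted sign character becomes the unique order-$2$ character of $F_{20} \times C_2$ with kernel $F_{20}$, giving restriction $J_{X_5} \otimes \Z^-_{F_{20}}$.

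For retract rationality, I would apply Saltman's criterion: it suffices to show $\rho_G(\hat T)$ is invertible in each case. The $S_5$-flasque resolution
\[
0 \to J_{X_5} \to J_{X_5} \otimes \Z[X_5] \to J_{X_5}^{\otimes 2} \to 0
\]
established earlier has $J_{X_5}^{\otimes 2}$ $S_5$-invertible; restricting to any subgroup preserves invertibility, which handles [4,31,1,3] and [4,31,4,2]. For the sign-twisted cases [4,31,1,4], [4,31,2,2] and [4,31,5,2], I would reuse the Florence-lemma construction from the previous proposition: tensoring the above flasque resolution with the augmentation sequence for the appropriate index-$2$ coset space yields an exact sequence whose right-hand term is a direct sum of tensor products of permutation lattices with $J_{X_5}^{\otimes 2}$, hence invertible.

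For failure of stable rationality, cases [4,31,1,4] and [4,31,5,2] are immediate from the preceding proposition, since there $\rho_{F_{20}}(J_{X_5} \otimes \Z^-_{D_{10}}) = -\rho_{F_{20}}(J_{X_5})$ and $\rho_{S_5}(J_{X_5} \otimes \Z^-_{A_5}) = -\rho_{S_5}(J_{X_5})$, and these are nonzero by the Lebruyn--Florence theorem that $J_{X_5}$ is not $F_{20}$-quasi-permutation. That same theorem handles [4,31,1,3] directly. For [4,31,4,2], restriction from $S_5$ to $F_{20}$ preserves the lattice $J_{X_5}$, so non-quasi-permutation descends from $S_5$ to $F_{20}$, contradicting quasi-permutation. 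For [4,31,2,2], restriction from $F_{20} \times C_2$ to the $F_{20}$-factor sends $\Z^-_{F_{20}}$ to the trivial lattice, so the restriction is $J_{X_5}$ as an $F_{20}$-lattice, and the same argument applies.

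The principal obstacle is not the rationality analysis itself, which reduces cleanly to the previous proposition together with one application of restriction, but the bookkeeping of the identifications: verifying that the specific GAP conjugacy classes correspond to the intended embeddings of $F_{20}$, $S_5$, and $F_{20} \times C_2$ into $S_5 \times C_2$, especially distinguishing the straight from the twisted embeddings for the pairs $([4,31,1,3],[4,31,1,4])$ and $([4,31,4,2],[4,31,5,2])$ and correctly recognising [4,31,2,2] as the twisted $F_{20} \times C_2$. Once these identifications are in hand, the two rationality assertions follow formally.
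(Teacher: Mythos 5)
Your proposal is correct and follows essentially the same route as the paper: all five lattices are restrictions of the $S_5\times C_2$-lattice $J_{X_5}\otimes\Z^-_{S_5}$, whose quasi-invertibility was established in the preceding proposition, so retract rationality is inherited by restriction (the paper invokes this directly rather than re-running the Florence construction case by case, which is all your twisted-case argument amounts to after adding the invertible complement of $J_{X_5}^{\otimes 2}$); and failure of stable rationality is obtained, exactly as you do, by restricting each lattice down to an $F_{20}$ and using that $\rho_{F_{20}}(J_{X_5})\ne 0$ together with $\rho_{F_{20}}(J_{X_5}\otimes \Z^-_{D_{10}})=-\rho_{F_{20}}(J_{X_5})$. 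The paper in fact omits the subgroup identifications entirely, noting only that they parallel the DadeGroup(4,7) computation.

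One local slip in your identification sketch: since the character of $\Z^-_{S_5}$ has kernel $S_5$, its restriction to the straight embedding $F_{20}\le S_5\times\{1\}$ is \emph{trivial}, not $\Z^-_{D_{10}}$; it is the twisted diagonal embedding $\sigma\mapsto(\sigma,\gamma^{\mathrm{sgn}(\sigma)})$ whose restriction of $\Z^-_{S_5}$ is $\Z^-_{D_{10}}$ (with $D_{10}=F_{20}\cap A_5$), and likewise for the two copies of $S_5$. So your straight/twisted assignments for the pairs $([4,31,1,3],[4,31,1,4])$ and $([4,31,4,2],[4,31,5,2])$ are interchanged. This is harmless here --- both $F_{20}$-lattices occur in the list and both have nonzero flasque class, and the GAP-generator check you propose would fix the labels --- but as written the sign bookkeeping is backwards.
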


\begin{proof}
Note that these observations for [4,31,1,3] and [4,31,4,2] were made in Hoshi
Yamasaki.  It was shown above  that 
$(J_{X_p}\otimes \Z^-_{S_p},S_p\times C_2)$ is quasi-invertible for $p\ge 5$ prime (or equivalently the corresponding norm one torus is retract
rational). Restrictions of quasi-invertible
lattices are quasi-invertible.  So all the lattices on the list above are 
quasi-invertible with corresponding tori retract rational.

By the result above, we know that 
$(J_{X_5},F_{20})$ is not quasi-permutation. So $\rho(J_{X_5},F_{20})\ne 0$.   Since every lattice in the list above
contains either $(J_{X_5},F_{20})$ or $(J_{X_5}\otimes \Z^-_{D_{10}},F_{20})$,
we see that the result will follow since
$$\rho_{F_{20}}(J_{X_5}\otimes \Z^-_{D_{10}})=-\rho_{F_{20}}(J_{X_5})\ne 0$$
\end{proof}

We will now address the  remaining case of a retract but not stably rational 
algebraic $k$-torus of dimension 4.

\begin{remark} For any cyclic group $C_n$ and given a primitive
$n$th root of unity $\omega_n$, there is a natural
$C_n$ lattice given by $\Z[\omega_n]$, which is the ring of integers of the 
field $\Q(\omega_n)$.  The generator of $C_n$, $\sigma_n$ then acts
as multiplication by $\omega_n$ on $\Z[\omega_n]$.
As a ring, $\Z[\omega_n]\cong \Z[X]/(\Phi_n(X))$ where $\Phi_n(X)$ is the 
$n$th cyclotomic polynomial of degree $\varphi(n)$, the Euler $\phi$ function of $n$.
With respect to the basis $1,\omega_n,\dots,\omega_n^{\varphi(n)-1}$,
the matrix of $\sigma_n$ acting on $\Z[\omega_n]$ is the 
companion matrix of $\Phi_n(X)$.
\end{remark}

\begin{prop} The lattice corresponding to the group with GAP ID \textup{[4,33,2,1]} is quasi-invertible
but not quasi-permutation and hence the associated algebraic 
torus is retract but not stably rational.
\end{prop}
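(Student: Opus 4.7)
The plan is to first identify the rank 4 lattice $M$ associated to the group $G$ with GAP ID $[4,33,2,1]$. The immediately preceding remark introduces, for any cyclic group $C_n$, the $C_n$-lattice $\Z[\omega_n]$ of rank $\varphi(n)$ on which the generator acts by multiplication by $\omega_n$ with matrix the companion matrix of $\Phi_n(X)$. One therefore expects $G$ to be cyclic and $M$ to be either $\Z[\omega_n]$ itself or a closely related cyclotomic construction. Since $[4,33,2,1]$ lies in the crystal system of $W(F_4) = \mathrm{DadeGroup}(4,9)$, the relevant orders $n$ must divide $|W(F_4)| = 1152$; one pins down $n$ and the precise lattice structure by matching the GAP matrix generators against companion matrices of the cyclotomic polynomials.

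Retract $k$-rationality of $T_M$ is immediate from the classical Endo--Miyata result that over a group with cyclic Sylow subgroups every flasque lattice is invertible. Thus $\rho_G(M) \in D(G)$ unconditionally, and Saltman's criterion (Theorem~2.7(3)) yields retract $k$-rationality. This step is clean and independent of the fine structure of $M$.

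To show $M$ is not quasi-permutation (equivalently, $\rho_G(M) \ne 0$), the plan is to construct an explicit flasque resolution
\begin{equation*}
0 \to M \to P \to F \to 0
\end{equation*}
with $P$ a natural permutation $G$-lattice (e.g.\ $\Z[G]$ when $M$ embeds naturally, or a sum dictated by the rational decomposition of $M$), and then show that $F$ is not stably $G$-permutation. The most robust obstruction is to find a subgroup $H \le G$ for which the birational invariant $\hat{H}^1(H, F) \ne 0$: by the discussion preceding Theorem~2.7, vanishing of $\hat{H}^1(H, \Pic X_L)$ for all $H$ is necessary for stable rationality, and $\Pic X_L$ is a flasque representative of $\rho_G(M)$. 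Alternatively, for $H$ small enough that Krull--Schmidt holds for $H$-lattices, one exhibits an indecomposable summand of $F|_H$ --- typically an augmentation ideal $\Z[\omega_p] = I_{C_p}$ or a sign lattice $\Z^-$ --- that cannot appear in any permutation $H$-lattice, forcing $F|_H$ out of the stable permutation class.

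The main obstacle is twofold. First, $M$ must be distinguished from the quasi-permutation cyclotomic candidates: $\Z[\omega_8]$ admits the flasque resolution $0 \to \Z[\omega_8] \to \Z[C_8] \to \Z[C_8/C_2] \to 0$, and $\Z[\omega_{12}] \cong \Z[\omega_3] \otimes_{\Z} \Z[\omega_4]$ is quasi-permutation by Florence's lemma applied to the augmentation resolutions for the two tensor factors (whose splitting multipliers $3$ and $2$ are coprime). So $M$ must be a subtler construction, e.g.\ a non-split extension involving $\Z[\omega_n]$ and a sign lattice $\Z^-_H$ for a suitable subgroup $H \le G$. Second, the detecting subgroup $H$ must be chosen so that the cohomological or Krull--Schmidt calculation becomes transparent; once $M$ is identified the flasque resolution is mechanical and the obstruction reduces to an elementary computation in the relevant cyclotomic ring.
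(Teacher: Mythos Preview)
Your identification of the group is the central gap: $G$ is \emph{not} cyclic. The group with GAP ID $[4,33,2,1]$ is the non-abelian group
\[
G = C_3 \rtimes C_8 = \langle \sigma,\tau \mid \sigma^3 = \tau^8 = 1,\ \tau\sigma\tau^{-1} = \sigma^{-1}\rangle
\]
of order $24$, with centre $\langle\tau^2\rangle\cong C_4$ and all proper subgroups cyclic. The preceding remark on $\Z[\omega_n]$ is invoked not to describe $M$ globally, but to describe its \emph{restrictions} to the Sylow subgroups: the lattice $L$ satisfies $L|_{C_3}\cong (I_{C_3})^2\cong \Z[\omega_3]^2$ and $L|_{C_8}\cong \Z[\omega_8]$. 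There is no single cyclotomic model for $L$ as a $G$-lattice, and your search among $\Z[\omega_8]$, $\Z[\omega_{12}]$, or a twist by a sign character cannot locate it.

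Your retract-rationality step survives for the right reason once the group is correctly identified: $G$ has cyclic Sylow subgroups, and in fact the paper argues directly that $L$ is quasi-permutation on each Sylow (as $(I_{C_3})^2$ and as the sign-permutation lattice $\Z[\omega_8]$), whence quasi-invertible.

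For the obstruction, neither of your proposed detectors matches the paper, and the first one is unlikely to succeed as stated. The paper observes that $L\cong L^*$ and $L^H=0$ for every nontrivial $H\le G$, so a coflasque resolution of $L$ is simply $0\to K\to \Z G\to L\to 0$; dualising gives the flasque resolution $0\to L\to \Z G\to F\to 0$ with $F=K^*$. One then computes $\hat H^0(H,F)\cong \hat H^1(H,L)$ from the known Sylow restrictions: $\hat H^0(H_3,F)\cong(\Z/3)^2$, $\hat H^0(H_8,F)\cong\Z/2$, and $\hat H^0(G,F)=0$. Assuming $F\oplus P\cong Q$ with $P,Q$ permutation, writing $P=\bigoplus_{d\mid 24}a_d\Z[G/H_d]$ and $Q=\bigoplus_{d\mid 24}b_d\Z[G/H_d]$, and applying $\hat H^0(G,-)$ and $\hat H^0(H_8,-)$ (the latter via Mackey) yields two linear systems in $x_d=a_d-b_d$. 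Comparing the $\Z/2$-components gives $x_2+x_6=0$ from the $G$-level and $3x_2+x_6=1$ from the $H_8$-level, an integer contradiction. This is a counting argument, not a single nonvanishing $\hat H^1(H,F)$; indeed $\hat H^0(G,F)=0$ shows that no obstruction is visible at the full group, and the contradiction only emerges by playing off the $G$ and $H_8$ equations against each other.
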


\begin{proof}
The generators of the group with GAP ID
[4,33,2,1] given by GAP 
are 
$$A=\left[\begin{array}{rrrr}0&0&-1&0\\-1&0&0&0\\1&1&1&-2\\0&1&0&-1\end{array}\right],B=\left[\begin{array}{rrrr}0&1&0&-1\\0&0&-1&1\\-1&0&0&1\\0&1&0&0\end{array}\right]$$
It is easy to check that $A$ has order 8 and $B$ has order 12.
One could then replace the generators by $A$ and $C=B^4$
and check that $ACA^{-1}=C^{-1}$.  This implies that for the semi-direct product 
$$G=C_3\rtimes C_8=\langle \sigma,\tau: \sigma^3=1,\tau^8=1,\tau\sigma\tau^{-1}=\sigma^{-1}\rangle$$
$\rho:G\to \GL(4,\Z)$ with $\rho(\sigma)=C$ and $\rho(\tau)=A$
gives a faithful representation of the group.

Note that $G$ has centre
$\Z(G)=\langle \tau^2\rangle\cong C_4$ and $H_3=\langle \sigma\rangle\cong C_3$
is normal.  
The group $H_8=\langle \tau\rangle\cong C_8$ is a Sylow 2-subgroup with
3 distinct conjugates $H_8=\langle \tau\rangle, (H_8)^{\sigma}=\langle \tau\sigma\rangle,(H_8)^{\sigma^{2}}=\langle \tau\sigma^2\rangle$ which pairwise intersect
in $\langle \tau^2\rangle$.
There are then $3\varphi(8)=12$ elements of order 8.
Since $\tau^2$ is central, $\tau^2\sigma$ is an element of order 12.
As the group has order 24, the elements of 
$H_{12}=\langle \tau^2\sigma\rangle\cong C_{12}$ account for the remaining 12 elements of the group.
This shows that the only proper subgroups of $G$ are cyclic.
There is exactly one subgroup $H_d$ of $G$ of order $d|12$ and each is a cyclic subgroup of $H_{12}\cong C_{12}$, and is normal in $G$.
The only non-normal subgroups are the 3 conjugates of $H_8$ which 
are cyclic of order 8.  

Let $L$ be the rank 4 lattice with the action of $G$ induced by $\rho$.
We will show that $L_{H_3}\cong (I_{C_3})^2$ and $L_{H_8}\cong \Z[\omega_8]$.

For the restriction to $H_3=\langle \sigma\rangle$,
 since the minimal polynomial of $C=\rho(\sigma)$ is $x^2+x+1$,
and $C\in \GL(4,\Z)$, it must have invariant factors $x^2+x+1,x^2+x+1$.
This is sufficient to show that $\Q L\cong \Q[\omega_3]^2\cong \Q I_{C_3}^2$ as $\Q H_3$-modules.  We need to check that $L\cong \Z[\omega_3]^2\cong I_{C_3}^2$ as $H_3\cong C_3$-lattices.
In fact, one can show that the matrix of $\rho(\sigma)=C$ with respect to 
the $\Z$-basis 
$$\{\e_1,C\e_1,\e_3,C\e_3\}=\{\e_1,\e_2-\e_4,\e_3,-\e_1+\e_4\}$$
of $L$ is $\diag(C_{x^2+x+1},C_{x^2+x+1})$
where 
$$C_{x^2+x+1}=\left[\begin{array}{rr}0&1\\-1&-1\end{array}\right]$$
is the companion matrix of $x^2+x+1$.  We may then conclude that $L_{H_3}\cong (I_{C_3})^2$.

For the restriction to $H_8$, since $A=\rho(\tau)$ is a matrix of order 8 in $\GL_4(\Z)$, it must have minimal polynomial $x^4+1$.
Again this is sufficient to conclude that the $\Q H_8$-module $\Q L$ is congruent to $\Q[\omega_8]$.
In fact, one can show that the matrix of $A$ with respect to the 
$\Z$-basis 
$$\{\e_4,A\e_4,A^2\e_4,A^3\e_4\}=\{\e_4,\e_2-\e_4,-\e_1-\e_2+\e_4,\e_1+\e_2+\e_3-\e_4\}$$
of $L$ is the companion matrix 
$C_{x^4+1}$ of $x^4+1$ given by 
$$\left[\begin{array}{rrrr}0&1&0&0\\0&0&1&0\\0&0&0&1\\-1&0&0&0\end{array}\right]$$
So  $L$ restricted to $H_8=\langle \tau\rangle\cong C_8$ must be isomorphic to $\Z[\omega_8]$.  We see that this lattice  is a sign-permutation lattice
isomorphic to $\Ind^{C_8}_{C_2}\Z_-$.

Note that $L$ restricted to $H_3$ is quasi-permutation as it is isomorphic to 
$I_{C_3}^2$, the direct sum of 2 quasi-permutation lattices
and $L_{H_8}$ is quasi-permutation as it is sign-permutation.
This implies that $L$ is quasi-invertible, since it is quasi-permutation on restriction to its Sylow subgroups.

Now, we need to construct a flasque resolution of $L$.
Note that $L\cong L^*$ as a $G$-lattice. This means we can construct a coflasque resolution of $L$
and dualise.
It turns out that this is easy.  Any non-trivial subgroup $H$ of $G$ contains
$H_2=\langle \tau^4\rangle \cong C_2$ or $H_3=\langle \sigma\rangle\cong C_3$.
Since $\rho(\tau^4)=-I_4$ shows that $L^{\langle \tau^4\rangle}=0$
and $L^{H_3}=(IC_3\oplus IC_3)^{C_3}=0$, we see that for any non-trivial subgroup 
$H$ of $G$, we have $L^H=0$.
So to find a coflasque resolution of $L$, we need only find 
a permutation lattice which surjects onto $L$.
Since $L$ is an irreducible $G$-lattice,
there is a surjection $\pi:\Z G\to L$. If $K=\ker(\pi)$,
then 
$$0\to K\to \Z G\to L\to 0$$
is a coflasque resolution and its dual is 
a flasque resolution of $L\cong L^*$.
Let $F=K^*$.
Then we have the flasque resolution
$$0\to L\to \Z G\to F\to 0$$
Since $\Z G$ is free as a $G$-lattice, we have $\hat{H}^0(H,F)\cong \hat{H}^1(H,L)$ for any subgroup $H$ of 
$G$.
Now, $\hat{H}^0(H_3,F)\cong\hat{H}^1(H_3,L)\cong \hat{H}^1(C_3,IC_3)^2\cong (\Z/3\Z)^2$.

Since $L_{H_8}\cong \Ind^{C_8}_{C_2}\Z_-$ is a sign-permutation lattice
we see that $$\hat{H}^0(H_8,F)=\hat{H}^1(H_8,L)=\hat{H}^1(H_8,\Ind^{H_8}_{C_2}\Z_-)\cong 
\hat{H}^1(C_2,\Z_-)=\Z/2\Z.$$

Suppose $F\oplus P\cong Q$ for some permutation $G$-lattices $P$
and $Q$.
Setting $H_{24}=G$, representatives of the conjugacy classes of subgroups of $G$ can be given by $\{H_d, d|24\}$ as described earlier.
We may then write 
$P=\oplus_{d|24}a_d\Z[G/H_d]$, $Q=\oplus_{d|24}b_d\Z[G/H_d]$ for some 
$a_d,b_d\in \Z$.  We will obtain a contradiction by applying  $\hat{H}^0$
to the equation $F\oplus P\cong Q$ for the group $G$ and the subgroup $H_8$.

We observe that 
$\hat{H}^0(G,F)\cong \hat{H}^1(G,L)=0$.
Indeed, since $L\cong L^*$ as $G$-lattices, $\hat{H}^{1}(H,L)\cong
\hat{H}^{-1}(H,L^*)=\ker_L(N_H)/I_H(L)$ for any subgroup $H$ of $G$.
Since $H_3\cong C_3$ is a normal subgroup and is generated by $\sigma$
with image having minimal polynomial $1+x+x^2$, it is clear that
$\ker_L(N_{H_3})=L$.  As $H_3$ is normal in $G$, $N_G=\sum_{g\in G/{H_3}}gN_{H_3}$, and 
so $\ker_L(N_G)=L$.  Similarly, since $H_8\cong C_8$ is generated by $\tau$ 
with image having minimal polynomial $x^4+1$, we see that $N_{H_8}=\sum_{i=0}^7\tau^i
=\sum_{i=0}^3\tau^i(\tau^4+1)$ has $\ker_L(N_{H_8})=L$.  But we have seen that $\hat{H}^{-1}(H_8,L)\cong \hat{H}^1(H_8,L)\cong \Z/2\Z$ and $\hat{H}^{-1}(H_3,L)\cong \hat{H}^1(H_3,L)\cong (\Z/3\Z)$.  Since $\hat{H}^{-1}(H,L)=\ker_L(N_H)/I_H(L)$, we see that 
$3L\subseteq I_{H_3}(L)\subseteq I_G(L)$ and $2L\subseteq I_{H_8}(L)\subseteq I_G(L)$. Then we see that $I_G(L)=L$ and so $\hat{H}^1(G,L)=0$.

Applying $\hat{H}^0(G,\cdot)$ to $F\oplus P\cong Q$, and observing that $\hat{H}^0(G,\Z[G/H_i])\cong \Z/|H_i|\Z$,
we see that we have
\begin{equation}\label{eq:contradiction}
\sum_{d|24}x_d\Z/d\Z=0
\end{equation}
where $x_d=a_d-b_d\in \Z$.
Tensoring (\ref{eq:contradiction}) by $\Z/8\Z$ and equating coefficients of $\Z/2^i\Z$ for $i=1,2,3$, we obtain
$$x_2+x_6=0, x_4+x_{12}=0,x_8+x_{24}=0$$

Now restrict the isomorphism $F\oplus P\cong Q$
to $H_8$.
Note that $\hat{H}^0(H_8,F)=\Z/2\Z$.
By Mackey's Theorem,
$$\Res^G_{H_8}\Ind^G_{H_d}\Z=\oplus_{H_8xH_d\in H_8\setminus G/H_d}\Z[H_8/H_8\cap H_d^x]$$
For all $d\ne 8$, $H_d$ is a normal subgroup,
and so $H_8xH_d=H_8H_dx$ is a coset of $H_8H_d$ in $G$.  Then
$$\Res^G_{H_8}\Z[G/H_d]\cong \Z[H_8/H_8\cap H_d]^{[G:H_8H_d]}, d\ne 8$$
In particular, if $d=1,2,4$, then $H_d\le H_8$, $H_8\cap H_d=H_d$ and $H_8H_d=H_8$ and 
so 
$$\Res^G_{H_8}\Z[G/H_d]\cong \Z[H_8/H_d]^3, d=1,2,4$$
If $d=3(2^k), k=0,1,2,3$, then $H_d\cap H_8=H_{2^k}$, $H_8H_d=G$ and so 
$$\Res^G_{H_8}\Z[G/H_d]\cong \Z[H_8/H_{2^k}], d=2^k3, k=0,1,2,3$$

For $H_8$, we see that $G=H_81H_8\cup H_8\sigma H_8=H_8\cup H_8\sigma H_8$
and $H_8\cap H_8^{\sigma}=H_4$.
So 
$$\Res^G_{H_8}\Z[G/H_8]\cong\Z\oplus \Z[H_8/H_4]$$
Taking $\hat{H}^0(H_8,\cdot)$ of $F\oplus P\cong Q$ and comparing coefficients
of $\Z/2^i\Z$, $i=1,2,3$, we obtain
$$3x_2+x_6=1, 3x_4+x_8+x_{12}=0, x_{8}+x_{24}=0$$
Together with our previous set of equations, we obtain a contradiction
since $x_2+x_6=0$ and $3x_2+x_6=1$
implies $2x_2=1$ but $x_2\in \Z$.
The contradiction implies that the flasque lattice $F$ cannot be $G$-stably
permutation and so our lattice $L$ cannot be $G$-quasi-permutation.
\end{proof}
\section{Stable rationality of the torus corresponding to [4,25,8,5]}

In this section we present a non-computational proof that the torus
corresponding to [4,25,8,5] is stably rational.  In an earlier version, we
erroneously concluded that we could prove that this algebraic torus was
rational.  It is not clear to this author at this point whether this torus
is rational or not.  However, this example may shed some further light on the question of whether or not stably rational tori are all rational.  We provide a lot of information about this case for this reason.

Let $G$= MatGroupZClass(4,25,8,5).
This group is generated by the following elements
$$g_1=\left[\begin{array}{rrrr}-1&0&0&0\\0&-1&0&0\\0&0&0&-1\\0&0&-1&0\end{array}\right],
g_2=\left[\begin{array}{rrrr}1&0&1&0\\0&0&1&0\\0&0&0&1\\0&-1&0&0\end{array}\right],
g_3=\left[\begin{array}{rrrr}1&-1&1&0\\0&-1&0&0\\0&0&-1&0\\0&0&0&1\end{array}\right],
g_4=\left[\begin{array}{rrrr}1&0&1&1\\0&1&0&0\\0&0&-1&0\\0&0&0&-1\end{array}\right]$$
Let $M_4=\oplus_{i=1}^4\Z\e_i$. $G$ stabilises the sublattice $M_3=\oplus_{i=2}^4\Z\e_i$.
In fact, $M_3\cong \Z B_3$ as a faithful $G\cong W(B_3)$-lattice.  Under the restriction to $M_3$, $g_1$ acts as $\tau_1\tau_2\tau_3(23)$,
$g_2$ acts as $\tau_1(123)$, $g_3$ acts as $\tau_1\tau_2$ and $g_4$ acts as $\tau_2\tau_3$.
Then $(g_2)^3$ acts as $\tau_1\tau_2\tau_3$, so that it may be seen that the restriction generates the same group as $\langle \tau_1,\tau_2,\tau_3,(23),(123)\rangle=W(B_3)\cong C_2\times S_4$.
It can  then be checked that $G$ acts on $M_4/M_3\cong \Z_N^{-}$ where $N=\langle g_2,g_3,g_4\rangle=\langle g_2^3\rangle \times \langle g_3,g_4\rangle \rtimes \langle g_2^2\rangle\cong C_2\times A_4$.
Since $G$ acts faithfully on $M_3$, the restriction map 
$\GL(M_4)\to \GL(M_3)$ restricts to an isomorphism on $G$.
Then as in its restriction to $M_3$, we see that $\langle g_2^3\rangle\cong C_2$ is the centre of $G$, $\langle g_2^2\rangle$ normalizes the subgroup 
$\langle g_3,g_4\rangle \cong C_2\times C_2$ so that $\langle g_2^3,g_3,g_4\rangle\cong A_4$ and $\langle g_2,g_3,g_4\rangle =\langle g_2^3\rangle\times
\langle g_2^2,g_3,g_4\rangle\cong C_2\times A_4$. 

Since $N\cdot \e_1$ contains 
$$\langle g_2\rangle \cdot\e_1=\{\e_1,\e_1+\e_3,
\e_1+\e_3+\e_4,\e_1-\e_2+\e_3+\e_4,\e_1-\e_2+\e_4,\e_1-\e_2\},$$
we see that $|N\cdot \e_1|\ge 6$.  Since $g_2^2(\e_1)=g_4(\e_1)$, and $g_4$ has order 2, then $C_3\cong \langle g_4g_2^2\rangle \le N_{\e_1}$.  So $3| |
N_{\e_1}|$.  But then $|N_{\e_1}|=|N|/|N\cdot \e_1|\le 4$ shows that
$N_{\e_1}=\langle g_4g_2^2\rangle \cong C_3$ and so $|N\cdot \e_1|=8$.
We need only 2 other orbit elements in addition to the above and they 
can be seen to be $g_3\e_1=\e_1-\e_2+\e_3$ and $g_2g_3\e_1=\e_1+\e_4$.
The orbit $N\cdot \e_1$ determines the orbit $N\cdot t_1$.
Clearly $\frac{1}{N_{t_1}}\sum_{n\in N}nt_1\in K(M_4)^N$.
From the computation of the $N$ orbit of $\e_1$, we see that 
$$z=\frac{1}{N_{t_1}}\sum_{n\in N}nt_1=
t_1+t_1t_3+t_1t_3t_4+t_1t_2^{-1}t_3t_4+t_1t_2^{-1}t_4+t_1t_2^{-1}+t_1t_4+t_1t_2^{-1}t_3=t_1(1+t_2^{-1})(1+t_3)(1+t_4)$$
Using the generators of $N$, it is not difficult to double check that the
element $z=t_1(1+t_2^{-1})(1+t_3)(1+t_4)$ is indeed fixed by $N$.
We will write $\beta=(1+t_2^{-1})(1+t_3)(1+t_4)\in K(M_3)$ so that $z=t_1\beta$.
Then $K(M_4)=K(M_3)(t_1)=K(M_3)(z)$.  Since $z$ is fixed by $N$, we have
$K(M_4)^G=(K(M_3)(z))^G=(K(M_3)(z))^N)^{G/N}=(K(M_3)^N(z))^{G/N}=L(z)^{\sigma}$
where $\sigma=g_1N$ and $L=K(M_3)^N$
Now $\sigma(z)=z^{-1}\beta\sigma(\beta)$.
It is well known that $L(z)^\sigma$ is rational over $L^\sigma=K(M_3)^G$
if and only if $\beta\sigma(\beta)$ is a norm of the extension $L^N/L^G$
or equivalently if there exists $\alpha\in L^N$ such that $\beta\sigma(\beta)=
\alpha\sigma(\alpha)$~\cite[p. 160]{Se79}.  We will rephrase this condition for our particular situation.

\begin{defn} Let $G$ act on a field $L$ and let $M$ be a $G$ lattice.
Then $G$ acts quasi-monomially on the group ring $L[M]$ and hence its quotient field $L(M)$
if its action extends the action on the units $L[M]^*=L^*\oplus M$
from its given action on $L^*$ and $L[M]^*/L^*\cong M$
so that 
$$1\to L^*\to L[M]^*\to M\to 0$$ is a short exact sequence of $G$ modules.
\end{defn}

\begin{lemma} Let $L$ be a field on which $G$ acts faithfully.
Let $N$ be a normal subgroup of $G$ of index 2.
Let $G$ act on $L(\Z_N^-)=L(t)$ by a quasi monomial action, extending its action on $L$.  Then $L(\Z_N^-)^G$ is rational over $L^G$ if and only if
the action of $G$ on $L(\Z_N^-)$ can be extended to a quasi-monomial
action on $L(\Z[G/N])$ if and only if $b=gN(t)t\in N_{L^N/L^G}(L^N)^{\times})$.
\end{lemma}

\begin{proof}
Suppose the quasi-monomial action of $G$  on $L(\Z_N^-)$ can be extended to a 
quasi-monomial action on $L(\Z[G/N])$. We write $L(\Z[G/N])=L(X,Y)$ where
$X,Y$ are the multiplicative generators of $\Z[G/N]$ in $L[\Z[G/N]]^{\times}$ corresponding to a permutation basis.  Then if $\sigma=gN$ generates $G/N\cong C_2$, $\sigma(X)=\alpha Y$ for some $\alpha\in L^{\times}$.
Since $\Z[G/N]$ is fixed by $N$, we see that $\alpha\in (L^N)^{\times}$.
Then note that  $\sigma^2=1$ implies that $\sigma(\alpha)\sigma(Y)=X$ and so 
$\sigma(Y)=\frac{1}{\sigma(\alpha)}X$.  We assumed that the quasi-monomial action of $G$ on $L(\Z_N^{-})=L(t)$ can be extended to that on $L(\Z[G/N])=L(X,Y)$.
Then $t$ would be mapped to $X/Y$ and so $\sigma(t)t=
\frac{\sigma(X)X}{\sigma(Y)Y}=\sigma(\alpha)\alpha$ as required.
Then $L(\Z_N^-)^G\subset L(\Z[G/N])^G$.  The latter field is rational over $L^G$ and so $L(\Z_N^-)^G$ is unirational over $L^G$.  Since $L(\Z_N^-)^G$ is of transcendence degree 1 over $L^G$, this means it is rational over $L^G$.

Conversely, if there exists $\alpha\in (L^N)^{\times}$ such that $\sigma(t)t=
\sigma(\alpha)\alpha$, we may extend the quasi-monomial action of $G$ on $L(\Z_N^-)=L(t)$ to one on $L(\Z[G/N])=L(X,Y)$ by the same rule.
That is, if $\sigma$ is a generator of $G/N$, then set $\sigma(X)=\alpha Y$.
This implies that $\sigma(Y)=\frac{1}{\sigma(\alpha)}X$ and the 
inclusion mapping $t\to X/Y$ can be seen to be $G$ invariant.
Note that $N$ fixes $t,X,Y$ and so it is only necessary to determine the action of $G/N$.
\end{proof}

\begin{remark}
We could apply the lemma to our situation.  Here $L=K(M_3)$ and 
$G$ acts quasi-monomially on $K(M_4)=K(M_3)(\Z_N^-)=K(M_3)(z)$.
Then $K(M_4)^G=(K(M_3)^N(z))^{G/N}$ would be rational over $K(M_3)^G$ 
if and only if $\sigma(z)z=\prod_{i=2}^4(1+t_i)(1+t_i^{-1})$ can be written as a norm from the quadratic extension $K(M_3)^N/K(M_3)^G$.
Unfortunately, as we do not have an explicit description of the field $K(M_3)^N$, it is not obvious to the author how to solve such a norm equation.
If this were the case, then $K(M_4)^N$ would be rational over $K(M_3)^G$ since 
we already know that $K(M_3)^G$ is rational over $K^G$ as $M_3$ is a sign permutation lattice.  However, if it is not the case, which may well be true,
this does not solve the rationality problem for $K(M_4)^G$.
Finding a subfield $E$ such that $K(M_4)^G/E$ is non-rational but $E/K^G$ is rational does not rule out the rationality of $K(M_4)^G/K^G$.
\end{remark}

We can however give an explicit proof of the stably rationality of $K(M_4)^G$ over $K^G$.

We first construct a coflasque resolution of $M_4^*$.
Let $\e_i^*$,$i=1,\dots,4$ be the dual basis of the standard basis $\e_1,\dots \e_4$ and recall that the matrix of the
action of each group element on a dual lattice with respect to the dual basis is the inverse transpose of its matrix with respect to the original basis.

We need to determine a permutation $G$-lattice which surjects onto $M_4^*$.
Note that $G\cdot \e_1^*=\{\pm \e_1^*\}$.  It is clear that $N=\langle g_2,g_3,g_4\rangle\subset G_{\e_1^*}$.
We have seen that $N\cong C_2\times A_4$, so $N$ is of index 2 in $G$ and so 
$G_{\e_1^*}=N$.
Note also that $\langle g_2\rangle$ has orbit $\{\e_2^*,-\e_4^*,-\e_3^*,-\e_1^*-\e_2^*,-\e_1^*+\e_4^*,-\e_1^*-\e_3^*\}$.
Since $g_1\e_2^*=-\e_2^*$, it is clear that 
$$\gamma=\{\pm \e_2^*,\pm \e_3^*,\pm \e_4^*, \pm (\e_1^*+\e_2^*), \pm (\e_1^*-\e_3^*), \pm (\e_1^*-\e_4^*)\}$$
 is contained in $G\cdot \e_2^*$.
Since $g_4\e_2^*=\e_2^*$, $g_3(\e_2^*)=g_2^3(\e_3^*)=-\e_1^*-\e_2^*$, and 
$g_3^2=g_4^2=1$, we see that $G_{\e_2^*}$ contains $\langle g_3g_2^3,g_4\rangle\cong C_2\times C_2$.
Then since $|G\cdot \e_2^*|\ge 12$ implies $|G_{\e_2^*}|\le 4$,
the orbit calculation shows that we must have $G_{\e_2^*}=\langle g_3g_2^3,g_4\rangle$  and $G\cdot \e_2^*=\gamma$.

Since $\Z G\cdot\e_1^*+\Z G\cdot\e_2^*$ contains $\{\e_1^*,\e_2^*,\e_3^*,\e_4^*\}$, we see that the map of $G$-lattices
$\pi:\Z[G/G_{\e_1^*}]\oplus \Z[G/G_{\e_2^*}]\to M_4^*$
sending $gG_{\e_1*}\to g\cdot \e_1^*$ and $gG_{\e_2^*}\to g\cdot \e_2^*$
is surjective.  

Note for a finite group $G$ acting on a set $X$,   a permutation $\Z$-basis of a transitive permutation $G$-lattice
$\Z[G/G_x]$ for some $x\in X$, is in bijection with the elements of the orbit $G\cdot x$.
We will write $p(g\cdot x)$ in place of $gG_x$.
Then this basis of $\Z[G/G_{\e_1^*}]\oplus \Z[G/G_{\e_2^*}]$ will be written as 
$$\{p(x): x\in G\cdot \e_1^*\}\cup \{p(y): y\in G\cdot \e_2^*\}$$
The map $\pi$ then sends $p(x)\to x$ for all $x\in G\cdot \e_1^*\cup G\cdot \e_2^*$.
As $x\in G\cdot \e_i^*, i=1,2$ if and only if $-x\in G\cdot \e_i^*$,
it is then easy to see that $p(x)+p(-x)$ is in $\ker(\pi)$ for all
$x\in G\cdot \e_1^*\cup G\cdot \e_2^*$ and that the set
$$\{p(x)+p(-x): x\in G\cdot \e_1^*\cup G\cdot \e_2^*\}$$
is $G$-stable.
So we see that $\ker(\pi)$ has a permutation $G$-sublattice
$$Q=\Z[G/G_{p(\e_1^*)+p(-\e_1^*)}]\oplus \Z[G/G_{p(\e_2^*)+p(-\e_2^*)}]\cong \Z\oplus \Z[G/D_8]$$
where we note that $G/G_{p(\e_2^*)+p(-\e_2^*)}\cong \langle g_1,g_3g_2^3,g_4\rangle\cong D_8$ since $g_1(g_3g_2^3)g_1^{-1}=g_4$.
From the calculation of the $G$-orbits $G\cdot \e_1^*$ and $G\cdot \e_2^*$
above, we see that a permutation basis of $Q$ is
given by 
$$\beta=\{p(x)+p(-x): x\in \{\e_i^*:i=1,\dots,4\}\cup \{\e_1^*+\e_2^*,\e_1^*-\e_3^*,\e_1^*-\e_4^*\}\}$$

A straightforward calculation 
 shows that 
we may extend the permutation basis 
$\beta$
for $Q$ to 
a $\Z$-basis for $C=\ker(\pi)$ by adding the vectors
$\{r_{1i}:i=2,3,4\}$
where 
$$r_{12}=p(\e_1^*+\e_2^*)-p(\e_1^*)-p(\e_2^*),
r_{13}=p(\e_1^*-\e_3^*)-p(\e_1^*)+p(\e_3^*),
r_{14}=p(\e_1^*-\e_4^*)-p(\e_1^*)+p(\e_4^*)$$



As we have seen, $G$ acts by permutations on the sublattice $Q\cong \Z\oplus \Z[G/D_8]$ of $C$.  We will be interested in the action of $G$ on the remaining basis vectors and hence on $C/Q$.
As our calculations will involve the restrictions to the Sylow subgroups of $G$, we will replace the generators with 
$\{g_1,g_2^3,g_2^3g_3,g_4,g_2^4\}$.  We note that a Sylow 3-subgroup of $G$ is $\langle g_2^4\rangle\cong C_3$ and a Sylow 2-subgroup of $G$ is $\langle g_2^3,g_1,g_2^3g_3,g_4\rangle
\cong C_2\times D_8$.
Note that $\langle g_2^3\rangle\cong C_2$ is central and $g_1$ conjugates $g_2^3g_3$ to $g_4$.

We will write the basis vectors for $Q$ as $s_i=p(\e_i^*)+p(-\e_i^*)$, $i=1,\dots,4$,  $$s_{12}=p(\e_1^*+\e_2^*)+p(-(\e_1^*+\e_2^*)),
s_{13}=p(\e_1^*-\e_3^*)+p(-(\e_1^*-\e_3^*)),
s_{14}=p(\e_1^*-\e_4^*)+p(-(\e_1^*-\e_4^*))$$
 We will compute the action of the generators of $G$ on $C$:

$g_1$ swaps $\e_1^*$ and $-\e_1^*$, $\e_2^*$ and $-\e_2^*$,
$\e_3^*$ and $-\e_4^*$, 
This shows that 
$g_1$ fixes $s_1,s_2,s_{12}$ and swaps $s_3$ and $s_4$, $s_{13}$ and $s_{14}$.
$g_1(r_{12})
=s_{12}-s_1-s_2-r_{12}$, 
$g_1(r_{13})
=s_{14}-s_1+s_4-r_{14}$, 
$g_1(r_{14})
=s_{13}-s_1+s_3-r_{13}$.

$g_2^3$ fixes $\e_1^*$,
and swaps $\e_2^*$ and $-(\e_1^*+\e_2^*)$, 
$\e_3^*$ and $\e_1^*-\e_3^*$, 
$\e_4^*$ and $\e_1^*-\e_4^*$.
This shows that 
$g_2^3$ fixes $s_1$, and swaps $s_2$ and $s_{12}$; $s_3$ and $s_{13}$; $s_4$ and $s_{14}$.
$g_2^3(r_{12})
=-s_{12}+s_2+r_{12}$
and $g_2^3$ fixes $r_{13}$ and $r_{14}$.

$g_2^3g_3$ fixes $\e_1^*,\e_2^*,\e_3^*$, and swaps $\e_4^*$ with $\e_1^*-\e_4^*$.
Then 
$g_2^3g_3$ fixes $s_1,s_2,s_3,s_{12},s_{13}$ and swaps $s_4$ and $s_{14}$.
This shows also that $g_2^3g_3$ fixes $r_{12},r_{13},r_{14}$.

$g_4$ fixes $\e_1^*,\e_2^*,\e_4^*$, and swaps $\e_3^*$ with $\e_1^*-\e_3^*$.
Then 
$g_4$ fixes $s_1,s_2,s_4,s_{12},s_{14}$ and swaps $s_3$ and $s_{13}$.
This shows also that $g_4$ fixes $r_{12},r_{13},r_{14}$.

$g_2^4$ fixes $\e_1^*$ and sends $\e_2^*$ to $-\e_3^*$, $\e_3^*$ to $\e_1^*-\e_4^*$, $\e_4^*$ to $\e_1^*+\e_2^*$.
Then $g_2^4$ fixes $s_1$ and sends $s_2$ to $s_3$, $s_3$ to $s_{14}$, $s_4$ to $s_{12}$, $s_{12}$ to $s_{13}$, $s_{13}$ to $s_4$ and $s_{14}$ to $s_2$.
Then $g_2^4(r_{12})
=r_{13}-s_3$, $g_2^4(r_{13})
=r_{14}$, $g_2^4(r_{14})
=r_{12}+s_2$.

From this calculation, it is clear that $G$ acts on $C/Q$ as a sign permutation lattice.  In fact, $\langle g_2^3,g_2^3g_3,g_4\rangle\cong C_2^3$ fix $C/Q$
pointwise, $g_1(r_{12}+Q)=-r_{12}+Q$, $g_1(r_{13}+Q)=-r_{14}+Q$ and 
$g_1(r_{14}+Q)=-r_{13}+Q$, $g_2^4$ acts as $r_{12}+Q\to r_{13}+Q\to r_{14}+Q\to
r_{12}+Q$.

We will show that $C$ is invertible which in turn implies that it is coflasque and flasque.  
According to ~\cite{CW90}, to check that $C$ is invertible it suffices to check that $\F_pC$ is $\F_p\Syl_p$ permutation for each
Sylow $p$-subgroup $\Syl_p$ of $G$ and for any Sylow $2$ subgroup $\Syl_2$, additionally
we must check that $\dim_{\Q}(\Q C)^{\Syl_2}=\dim_{\F_2}(\F_2 C)^{\Syl_2}$.

For $p=3$, we note that a Sylow $p$-subgroup $\langle g_2^4\rangle\cong C_3$ 
acts by permutations on $C/Q$ and so $0\to Q\to C\to C/Q\to 0$ splits 
as a $C_3=\langle g_2^4\rangle$ lattice.  Then $C$ restricted to $C_3$
is isomorphic to $Q\oplus C/Q$ restricted to $C_3$ and so is permutation.
In fact $C_{C_3}\cong \Z C_3^3\oplus \Z$.  Then it is clear that $\F_3 C$
is also $\F_3 C_3$ permutation.

For $p=2$, from the earlier description of the action of $G$ on $C$, one can check that the following is a permutation basis for $\F_2C$ as an $\F_2\Syl_2$
module where $\Syl_2=\langle g_1,g_2^3,g_2^3g_3,g_4\rangle$:
$$\{s_2,s_3,s_4,s_{12},s_{13},s_{14},r_{12}+s_1+s_2,r_{12}+s_{12},r_{14},
r_{13}+s_{13}+s_1+s_3\}$$




A calculation also shows  that the orbits of this permutation basis under $\Syl_2$ 
are $\{s_2,s_{12}\}$, $\{s_3,s_4,s_{13},s_{14}\}$, $\{r_{12}+s_1+s_2,r_{12}+s_{12}\}$ and $\{r_{14},r_{13}+s_{13}+s_1+s_3\}$
and so $\dim_{\F_2}(\F_2C)^{\Syl_2}=4$.

On the other hand, from the exact sequence $0\to Q\to C\to C/Q\to 0$,
we see that 
$$0\to Q^{\Syl_2}\to C^{\Syl_2}\to (C/Q)^{\Syl_2}\to 0$$
since $Q$ is permutation and so is coflasque.
Since the permutation lattice $Q$ has orbits
$\{s_1\}\cup \{s_2,s_{12}\}\cup \{s_3,s_4,s_{13},s_{14}\}$ under $\Syl_2$, we see that
$\rk(Q^{\Syl_2})=3$.  Since $C/Q$ is sign permutation and is fixed by $\langle g_2^3,g_2^3g_3,g_4\rangle$,  then $(C/Q)^{\Syl_2}=(C/Q)^{\langle g_1\rangle}=\Z (r_{13}-r_{14})$.
So
$\dim_{\Q}(\Q C)^{\Syl_2}=\rk(C^{\Syl_2})=\rk(Q^{\Syl_2})+\rk((C/Q)^{\Syl_2})
=3+1=4$.  Since this agrees with $\dim_{\F_2}(\F_2C)^{\Syl_2}$ and $\F_pC$
is $\F_p\Syl_p$-permutation for all Sylow $p$-subgroups $\Syl_p$ of $G$, then 
$C$ is invertible as required.

[Note that although
$$0\to \F_2Q\to \F_2C\to \F_2(C/Q)\to 0$$
is an exact sequence of $\F_2\Syl_2$ modules with 2 end modules permutation,
this sequence does not split.   This can already be seen when restricting the exact sequence above to the subgroup $\langle g_1\rangle$.
This means that one cannot use the same argument as in the $p=3$ where the 
sequence splits integrally.]

So we now have a coflasque resolution for $M_4^*$ for which the coflasque lattice is invertible.  This gives us a flasque resolution for $M_4$ for which the 
flasque lattice $C^*$ is invertible. We need to show that $C$ is stably permutation, which would then imply the same for $C^*$.

Since $C/Q$ is sign-permutation, it is quasi-permutation and self-dual, so that
there must exist a sequence $0\to P_0\to P_1\to C/Q\to 0$ where $P_0,P_1$ are 
permutation.  Explicitly, $C/Q\cong \Ind_{\Syl_2}^G\Z_{C_2^3}^-$,
Indeed,  for any transitive sign permutation lattice $S$ for a group $G$ 
with $v$ an element of the sign permutation basis for $S$, if $G_v$ is the stabilizer subgroup of $v$ and $G_v^{\pm}=\{g\in G:gv=\pm v\}$,
we may see that $S\cong \Ind_{G_v^{\pm}}^G\Z_{G_v}^-$, via the map 
$gv\mapsto g\otimes z, g\in G$ where $z$ is a basis of the rank 1 sign lattice $\Z_{G_v}^-$ of 
$G_v^{\pm}$ with kernel $G_v$.
In our case $C/Q$ is a transitive sign permutation lattice with sign permutation basis $\{\overline{r_{12}},\overline{r_{13}},\overline{r_{14}}\}$.
It is easy to see that $C_2^3$ fixes $\overline{r_{12}}$ pointwise and that the
orbit of $\overline{r_{12}}$ under $G$ is 
$\{\pm\overline{r_{12}},\pm\overline{r_{13}},\pm\overline{r_{14}}\}$.
So the stabilizer subgroup of $\overline{r_{12}}$ is $C_2^3$.
It is similarly easy to check that $G_{\overline{r_{12}}}^{\pm}$ is $\Syl_2$.
Then the exact sequence of $\Syl_2$ lattices
$$0\to Z\to \Z[\Syl_2/C_2^3]\to \Z_{C_2^3}^-\to 0$$
induces an exact sequence of $G$ lattices
$$0\to \Z[G/\Syl_2]\to \Z[G/C_2^3]\to \Ind_{\Syl_2}^G\Z_{C_2^3}^-\to 0$$
Since the last term of this exact sequence is isomorphic to $C/Q$,
we can produce a pullback diagram:
$$
\xymatrix{  
&\Z[G/\Syl_2] \ar[r]^{=}\ar@{>->}[d] &\Z[G/\Syl_2]\ar@{>->}[d]  \\
  Q \ar@{>->}[r]\ar[d]^{=} & \mbox{pull-back} \ar@{->>}[r]\ar@{->>}[d] 
&\ar@{->>}[d]\Z[G/C_2^3]\\  
  Q \ar@{>->}[r]& C \ar@{->>}[r] &\Ind_{\Syl_2}^G\Z_{C_2^3}^- 
}
$$

Since $Q$ and $\Z[G/C_2^3]$ are permutation, the middle row splits.
Also, since we have shown that $C$ is invertible and $\Z[G/\Syl_2]$ is permutation, we also have $\Ext^1_G(C,\Z[G/\Syl_2])=0$ since $C$ is the direct summand of a permutation lattice.  Thus, also the middle column splits.
We then see that 
$$C\oplus \Z[G/\Syl_2]\cong Q\oplus \Z[G/C_2^3]=\Z\oplus \Z[G/D_8]\oplus \Z[G/C_2^3]$$
is stably permutation as required.
This shows that $M_4$ is $G$ quasi-permutation as it then satisfies a $G$ exact sequence:
$$0\to M_4\to \Z[G/N]\oplus \Z[G/C_2^2]\oplus \Z[G/\Syl_2]\to 
\Z\oplus \Z[G/D_8]\oplus \Z[G/C_2^3]\to 0$$

\bibliographystyle{alpha}
\newcommand{\etalchar}[1]{$^{#1}$}

\end{document}